\newtheorem{theorem}{Theorem}[section]
\newtheorem{lemma}[theorem]{Lemma}
\newtheorem{remark}[theorem]{Remark}
\def\ts{\thinspace}
\DeclareMathOperator{\arcsinh}{arcsinh}
\DeclareMathOperator{\sinc}{sinc}
\renewcommand{\epsilon}{\varepsilon}
\newcommand{\R}{\mathbb{R}}
\newcommand{\bigo}{\mathcal{O}}
\newcommand{\IP}{\mathbb{P}}
\def \IR{\mathbb R}
\newcommand{\EE}{\IR^d}
\def \IC{\mathbb C}
\def \IE{\mathbb E}
\newcommand{\calN}{\mathcal{N}}
\newcommand{\calL}{\mathcal{L}}
\newcommand{\calS}{\mathcal{S}}
\renewcommand{\R}{\mathbb{R}}
\newcommand{\dif}[2]{\frac{\partial #1}{\partial #2}}
\newcommand{\diff}[2]{\frac{\partial^2 #1}{\partial #2^2}}
\title{
	Optimal explicit stabilized integrator of weak order one for\\
	stiff and ergodic stochastic differential equations
}
\author{ 
	Assyr Abdulle\textsuperscript{1}, Ibrahim Almuslimani\textsuperscript{2}, and Gilles Vilmart\textsuperscript{2}
}
\begin{document}
	\maketitle
	\footnotetext[1]{Mathematics Section, \'Ecole Polytechnique F\'ed\'erale de Lausanne,
		Station 8, CH-1015 Lausanne, Switzerland, Assyr.Abdulle@epfl.ch}
	\footnotetext[2]{Universit\'e de Gen\`eve, Section de math\'ematiques, 2-4 rue du Li\`evre, CP 64, CH-1211 Gen\`eve 4, Switzerland, Ibrahim.Almuslimani@unige.ch, Gilles.Vilmart@unige.ch}
	\begin{abstract}
A new explicit stabilized scheme of weak order one for stiff  and ergodic stochastic differential equations (SDEs)  is introduced. In the absence of noise, the new method coincides with the classical deterministic stabilized scheme (or Chebyshev method) for diffusion dominated advection-diffusion problems and it inherits its optimal stability domain size that grows quadratically with the number of internal stages of the method.
For mean-square stable stiff stochastic problems, the scheme has an  optimal extended mean-square stability domain that grows at the same quadratic rate as the deterministic stability domain size  in contrast to known existing methods for stiff SDEs [A. Abdulle and T. Li. Commun. Math. Sci., 6(4), 2008, A. Abdulle, G. Vilmart, and K. C. Zygalakis, SIAM J. Sci. Comput., 35(4), 2013].
Combined with postprocessing techniques, the new methods
achieve a convergence rate of order two for sampling the invariant measure of a class of ergodic SDEs, achieving a stabilized version of the non-Markovian scheme introduced in [B. Leimkuhler, C. Matthews, and M. V. Tretyakov, Proc. R. Soc. A, 470, 2014].

		\smallskip
		\noindent
		{\it Keywords:\,}
		explicit stochastic methods, stabilized methods, postprocessor, invariant measure, ergodicity, orthogonal Runge-Kutta Chebyshev, SK-ROCK, PSK-ROCK.
		\smallskip
		
		\noindent
		{\it AMS subject classification (2010):\,}
		 65C30, 60H35, 65L20, 37M25
	\end{abstract}
	
	%\tableofcontents
	
	%\section{Introduction}
	%--------------------------------------------------------------
	
	\section{Introduction}
	
	We consider It\^o systems of stochastic differential equations of the form
	\begin{equation} \label{eq:sde0}
	dX(t) = f(X(t)) dt + \sum_{r=1}^mg^r(X(t)) dW_r(t),\qquad X(0)=X_0
	\end{equation}
	where $X(t)$ is a stochastic process with values in $\IR^d$, $f:\IR^d\rightarrow \IR^d$ is the drift term,
	$g^r:\IR^d\rightarrow \IR^d$, $r=1,\ldots, m$ are the diffusion terms, and $W_r(t)$, $r=1,\ldots,m,$ are independent 
	one-dimensional Weiner processes fulfilling the usual assumptions.
	We assume that the drift and diffusion functions are smooth enough and Lipschitz continuous to ensure the existence and uniqueness of a solution of \eqref{eq:sde0}
	on a given time interval $(0,T)$.
	We consider autonomous problems to simplify the presentation, but we emphasise that the scheme can also be extended to non-autonomous SDEs. A one step numerical integrator 
for the approximation of \eqref{eq:sde0} at time $t=nh$ is a discrete dynamical system of the form
\begin{equation}\label{eq:methnum}
X_{n+1} = \Psi(X_{n},h,\xi_{n})
\end{equation}
where $h$ denotes the stepsize and $\xi_{n}$ are independent random vectors. 
Analogously to the deterministic case, standard explicit numerical schemes for stiff stochastic problems, such as the simplest Euler-Maruyama method defined as
	\begin{equation} \label{eq:EM}
	X_{n+1} = X_n + hf(X_n) + \sum_{r=1}^mg^r(X_n) \Delta W_{n,r},\qquad X(0)=X_0,
	\end{equation}
	where $\Delta W_{n,r}=W_{r}(t_{n+1})-W_{r}(t_{n})$ are the Brownian increments,
face a severe timestep restriction \cite{Hig00,HaW96,KlP92}, and one can use an implicit or semi-implicit scheme with favourable
	stability properties. In particular, it is shown in \cite{Hig00} that the implicit $\theta$-method of weak order one is mean-square A-stable if and only if $\theta \geq 1/2$, while
	weak order two mean-square A-stable are constructed in \cite{AVZ12b}.
	An alternative approach is to consider explicit stabilized schemes with extended stability domains, as proposed in 
	\cite{AbC08,AbL08}.
	In \cite{AbL08} the deterministic Chebyshev method is extended to the context of mean-square stiff stochastic differential equations with It\^o noise, while the Stratonovitch noise case
	is treated in \cite{AbC08}.
	In place of a standard small damping, the main idea in \cite{AbC08,AbL08} is to use a large damping parameter $\eta$ optimized for each number $s$ of stages to stabilize
	the noise term. This yields a family of Runge-Kutta type schemes with extended stability domain with size $L_s\simeq 0.33 s^2$.
	This stability domain size was improved to $L_s\simeq 0.42 s^2$ in \cite{AVZ13} where a family of  weak second order stabilized schemes (and strong order one under suitable assumptions) is constructed based on the deterministic orthogonal Runge-Kutta-Chebyshev method of order 2 (ROCK2) \cite{AbM01}.

For ergodic SDEs, 	i.e., when \eqref{eq:sde0} has a unique invariant measure 
$\mu$ satisfying 
for each test function $\phi$ and for any deterministic initial condition $X_0=x$,
\begin{equation} \label{eq:ergodic_ex}
\lim_{T \rightarrow \infty} \frac{1}{T}\int_{0}^{T}\phi(X(s))ds =\int_{\EE}\phi(y)d\mu(y), \qquad \mbox{almost surely},
\end{equation}
one is interested in approximating numerically the long-time dynamics and to design numerical scheme with a unique invariant measure such that 
\begin{equation} \label{eq:difference1i}
%e(\phi,h):=\left|\lim_{N \rightarrow \infty} \frac{1}{N+1}\sum_{n=0}^{N} \phi(X_{n})-\int_{\EE}\phi(y)d\mu(y)\right|\leq C h^{r}.
\left| \lim_{N \rightarrow \infty} \frac{1}{N+1}\sum_{n=0}^{N} \phi(X_{n})-\int_{\EE}\phi(y)d\mu(y)\right|\leq C h^{r},
\end{equation} 
where $C$ is independent of $h$ small enough and $X_0$. In such a situation, we say that the numerical scheme has order $r$ with respect to the invariant measure.
For instance, the Euler-Maruyama method has order~$1$ with respect to the invariant measure. In \cite{LM13} the following non-Markovian scheme with the same cost as the Euler-Maruyama method was proposed for Brownian dynamics, i.e where the vector field is a gradient $f(x)=-\nabla V(x)$ and the noise is additive ($g(x)=\sigma$), 
	\begin{equation} \label{eq:LM}
	X_{n+1} = X_n + hf(X_n) + \sigma \frac{\Delta W_{n,j}+\Delta W_{n+1,j}}2,\qquad X(0)=X_0,
	\end{equation}
	and it was shown in \cite{LMT14}
that \eqref{eq:LM} has order $2$ with respect to the invariant measure for Brownian dynamics.
However, the admissible stepsizes for such an explicit method to be stable may face a severe restriction and alternatively to switching to drift-implicit methods, one may ask if a stabilized version of such an attractive non-Markovian scheme exists.

In this paper we introduce a new family of explicit stabilized schemes with optimal mean-square stability domain of size $L_s=Cs^2$, where $C\geq 2-\frac43 \eta$ and $\eta\geq 0$ is a small  parameter. We emphasize that in the deterministic case, $L_s=2s^2$ is the largest, i.e. optimal, stability domain along the negative real axis for an explicit s-stage Runge-Kutta method \cite{HaW96}. 
We note that the Chebyshev method \eqref{eq:Cheb1} (with $\eta=0)$ realizes such an optimal stability domain. The new schemes have strong order $1/2$ and weak order $1$.
The main ingredient for the design of the new schemes is to consider second kind Chebyshev polynomials, in addition to the usual first kind Chebyshev polynomials involved in the deterministic Chebychev method and stochastic extensions \cite{AbL08,AbC08}.
For stiff  stochastic problems, the stability domain sizes are close to the optimal value $2s^2$
and in the deterministic setting the method coincide with the optimal first order  explicit stabilized method. Thus these methods are more efficient than previously introduced stochastic stabilized methods \cite{AbL08,AVZ13}.
For ergodic dynamical systems, in the context of the ergodic Brownian dynamics, the new family of explicit stabilized schemes allows for a postprocessing \cite{Vil15} (see also \cite{BrV16,LaV17} in the context of Runge-Kutta methods) to achieve
order two of accuracy for sampling the invariant measure. In this context, our new methods can be seen as a stabilized version of the non-Markovian scheme \eqref{eq:LM} introduced in \cite{LM13,LMT14}.

	This paper is organized as follows.
	In Section \ref{sec:method}, we introduce the new family of schemes with optimal stability domain and we recall the main tools for the study of stiff integrators in the mean-square sense.
	We then analyze its mean-square stability properties (Section \ref{sec:analysis}), and convergence properties (Section \ref{sec:convergence}).
	In Section \ref{sec:langevin}, using a postprocessor we present a modification  with negligible overcost that yields order two of accuracy for the invariant measure of a class of ergodic overdamped Langevin equation.
	Finally, Section \ref{sec:numerical} is dedicated to the numerical experiments that confirm our theoretical analysis and illustrate the efficiency of the new schemes.

	\section{New second kind Chebyshev methods}
	\label{sec:method}
In this section we introduce our new stabilized stochastic method. We first briefly recall the concept of stabilized methods.
In the context of ordinary differential equations {(ODEs)},
\begin{equation} \label{eq:ode}
\frac{dX(t)}{dt} = f(X(t)), \qquad X(0)=X_0,
\end{equation}
and the Euler method $X_{1}=X_{0}+hf(X(0))$, a stabilization procedure  based on recurrence formula has been introduced
in \cite{HoS80}. Its construction relies on Chebyshev polynomials (hence the alternative name ``Chebyshev methods"),
$T_s(\cos x) =\cos (s x)$  and it is based on the explicit $s$-stage Runge-Kutta method
\begin{eqnarray} 
\label{eq:Cheb1}
K_{0} &=& X_{0}, \quad
K_{1} = X_{0}+h \mu_1 f(K_{0}), \nonumber \\
K_{i} &=&\mu_ihf(K_{i-1})+\nu_iK_{i-1}+\kappa_iK_{i-2},\quad j=2,\ldots,s,\\
X_{1} &=& K_{s} \nonumber,
\end{eqnarray}
where 
\begin{equation}\label{eq:omega}
\omega_{0}=1+\frac{\eta}{s^{2}},\quad \omega_{1}=\frac{T_{s}(\omega_{0})}{T'_{s}(\omega_{0})},\quad \mu_1=\frac{\omega_1}{\omega_0},
\end{equation} 
and for all $i=2,\ldots,s,$
	\begin{equation}\label{coeff}
	\mu_i=\frac{2\omega_1T_{i-1}(\omega_0)}{T_i(\omega_0)},\quad 
	\nu_i=\frac{2\omega_0T_{i-1}(\omega_0)}{T_i(\omega_0)},\quad 
	\kappa_i=-\frac{T_{i-2}(\omega_0)}{T_i(\omega_0)}=1-\nu_i.
	\end{equation}
One can easily check that the (family of) methods \eqref{eq:Cheb1} has the same first order accuracy as the Euler method (recovered for $s=1$). In addition, the scheme
\eqref{eq:Cheb1} has a low memory requirement (only two stages  should be stored when applying the recurrence formula) and it has a good internal stability with respect to round-off errors \cite{HoS80}.
The attractive feature of such a scheme comes from its stability behavior. Indeed, the method \eqref{eq:Cheb1} applied to the 
linear test problem $dX(t)/dt=\lambda X(t)$ yields, using the recurrence relation 
		\begin{equation} \label{eq:recTU1}
		T_j(p) = 2p T_{j-1}(p) - T_{j-2}(p),\qquad
		\end{equation}
		where $T_0(p)= 1,T_1(p)= p$, 
with $p=\lambda h$,
\begin{equation} \label{eq:Cheb1_stab}
X_{1}=R_{s,\eta}(p) X_0=\frac{T_{s}(\omega_0+\omega_1 p)}{T_{s}(\omega_0)} X_0,
\end{equation}
where the dependence of the stability function $R_{s,\eta}$ on the parameters $s$ and $\eta$ is emphasized with a corresponding subscript. 
The real negative interval $(-C_s(\eta)\cdot s^2 ,0)$ is included in the 
stability domain of the method
\begin{equation} 
\label{eq:stabSre}
{\cal S}:=\{p\in \mathbb{C}; |R_{s,\eta}(p)|\leq 1\}.
 \end{equation}
 The constant $C_s(\eta) \simeq 2-4/3\,\eta$ depends on the so-called damping parameter $\eta$ and for $\eta=0$,
 it reaches the maximal value $C_s(0)=2$. Hence, given the stepsize $h$, for systems with a Jacobian having large real negative eigenvalues (such as diffusion problems) with spectral radius $\lambda_{\max}$ at $X_n$, the parameter $s$ for the next step $X_{n+1}$ can be chosen adaptively as\footnote{The notation $[x]$ stands for the integer rounding of real numbers.}
\begin{equation} \label{eq:defsnum}
s=\left[\sqrt{\frac{h\lambda_{\max}+1.5}{2-4/3\,\eta}}+0.5\right],
\end{equation}
see \cite{Abd02} in the context of deterministically stabilized schemes of order two with adaptative stepsizes.
The method \eqref{eq:Cheb1}
 is much more efficient as its stability domain increases {\it quadratically} with the number $s$ of function evaluations while a composition of $s$  explicit Euler steps
 (same cost) has a stability domain that only increases {\it linearly} with $s$.
 \begin{figure}[tb]
		\smallskip
		\centering
		\begin{subfigure}[t]{0.9\textwidth}
			\centering
			\includegraphics[width=0.9\linewidth]{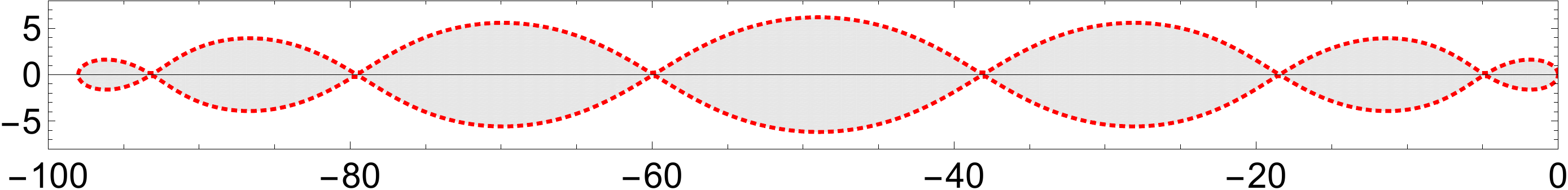}\\[2.ex]
			\includegraphics[width=0.9\linewidth]{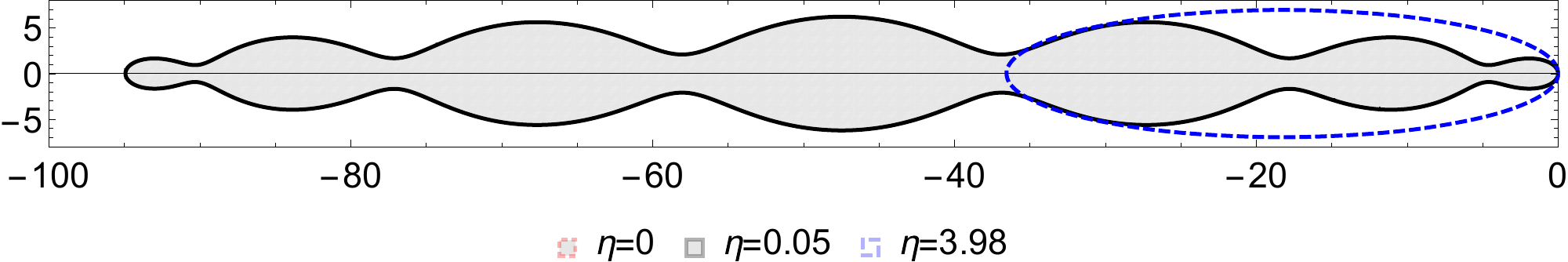}
			\caption{Complex stability domain for different damping parameters $\eta$.}
		\end{subfigure}\\[-1ex]
		%\medskip
		\begin{subfigure}[t]{0.9\textwidth}
			\centering
			\includegraphics[width=0.9\linewidth]{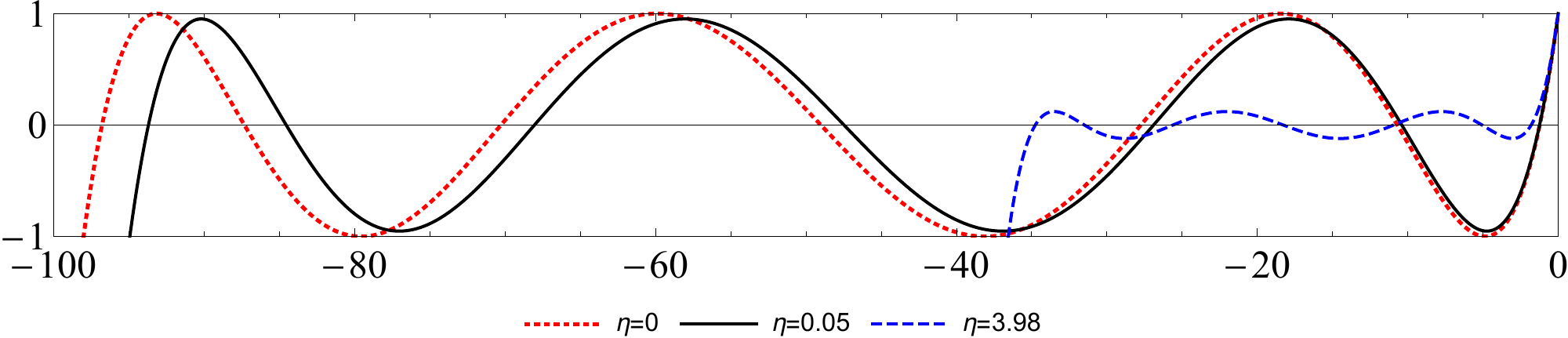}
			\caption{Corresponding stability functions $R_{7,\eta}(p)$.}
		\end{subfigure}
		\caption{
			Stability domains and stability functions of the deterministic Chebyshev method for $s=7$ and different damping values $\eta=0,0.05,3.98$.
			\label{fig:stabdetplot}}
	\end{figure}
	In Figure \ref{fig:stabdetplot}(a) we plot the complex stability domain $\{p\in \IC\,;\, |R_{s,\eta}(p)| \leq 1\}$
	for $s=7$ stages and different values $\eta=0$, $\eta=0.05$ and $\eta=3.98$, respectively.
	We also plot in Figure \ref{fig:stabdetplot}(b) the corresponding stability function $R_{s,\eta}(p)$ as a function of $p$ real, to illustrate that 
	the stability domain along the negative real axis corresponds to the values for which $|R_{s,\eta}(p)|\leq1$.
	We observe that in the absence of damping ($\eta=0$), the stability domain includes the large real interval $[-2\cdot s^2,0]$ of width $2\cdot 7^2=98$. However
	for all $p$ that are a local extrema of the stability function, where 
	$|R_{s,\eta}(p)|=1$, the stability domain is very thin and does not include a neighbourhood close to the negative real axis.
	To make the scheme robust with respect to small perturbations of the eigenvalues, it is therefore needed to add some damping 
	and a typical value is $\eta=0.05$, see for instance the reviews 
	\cite{Ver96b,Abd13c}.
	The advantage is that the stability domain now includes a neighbourhood of the negative real axis portion. The price of this improvement is a slight reduction
	of the stability domain size $C_\eta s^2$, where $C_\eta \simeq 2-\frac43 \eta$.
	 Chebyshev methods have been first generalized for It\^o SDEs in \cite{AbL08} (see \cite{AbC08} for Stratonovitch SDEs)  
	with the following stochastic orthogonal Runge-Kutta-Chebyshev method (S-ROCK):\footnote{A variant with analogous stability properties is proposed in \cite{AbL08}
	with $g^r(K_s)$ replaced by $g^r(K_{s-1})$ in \eqref{eq:stdmethod}.}
	\begin{eqnarray}
	\label{srock1}
	K_0&=&X_0 \nonumber\\
	K_1&=&X_0+\mu_1hf(X_0) \nonumber\\
	%K_1&=&X_0+(\mu_1+\alpha)hf(X_0+\frac{1}{2}G) - \alpha hf(X_0-\frac{1}{2}G) +\frac{1}{\omega_0}G\\
	K_i&=&\mu_ihf(K_{i-1})+\nu_iK_{i-1}+\kappa_iK_{i-2},\quad i=2,\ldots,s, \nonumber\\
	X_1&=&K_s + \sum_{r=1}^mg^r(K_s)\Delta W_r, \label{eq:stdmethod}
	\end{eqnarray}
where the coefficients $\mu_i,\nu_i,\kappa_i$ are defined in \eqref{eq:omega},\eqref{coeff}. In contrast to the deterministic method \eqref{eq:Cheb1}, where $\eta$ is chosen small and fixed (typically $\eta=0.05$), in stochastic case for the classical S-ROCK method \cite{AbL08}, the damping $\eta=\eta_s$ is not small and chosen as an increasing function of $s$ that plays a crucial role in stabilizing the noise and in obtaining an increasing portion of the true stability domain \eqref{equ:estab_num_ell} as $s$ increases. 

\begin{figure}[t!b]
		%\small
		\centering
		\begin{subfigure}[t]{0.9\textwidth} % modif 0.35 instead of 0.43
			\centering
			\includegraphics[scale=0.43]{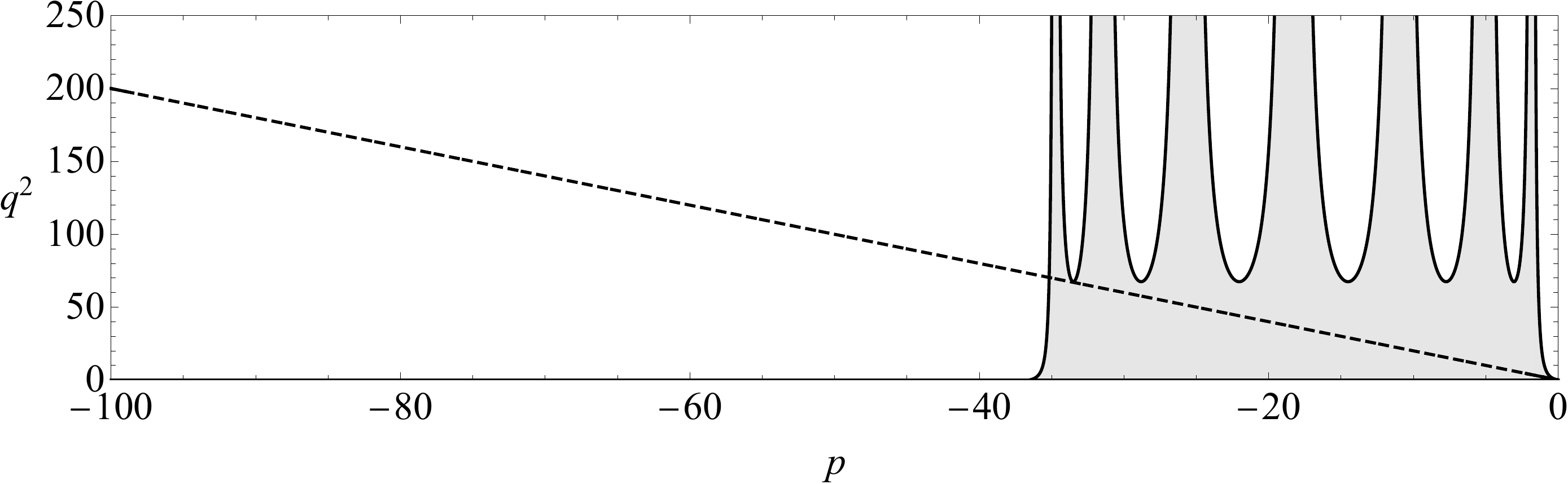}
			\caption{Standard S-ROCK method ($s=7$, $\eta=3.98$).}
		\end{subfigure}\\%%[-1ex] %gillesmodif
		\begin{subfigure}[t]{0.9\textwidth}
			\centering
			\includegraphics[scale=0.43]{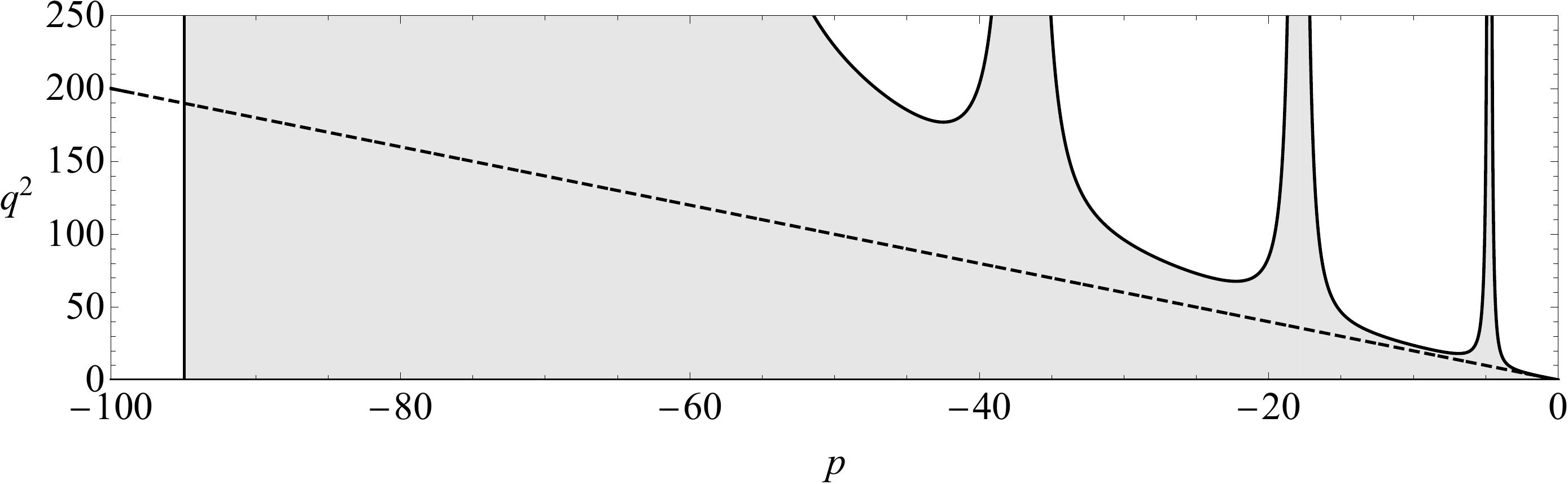}
			\caption{New SK-ROCK method ($s=7$, $\eta=0.05$).}
		\end{subfigure}\\%%[-1ex] %gillesmodif
		\begin{subfigure}[t]{0.9\textwidth}
			\centering
			\includegraphics[scale=0.43]{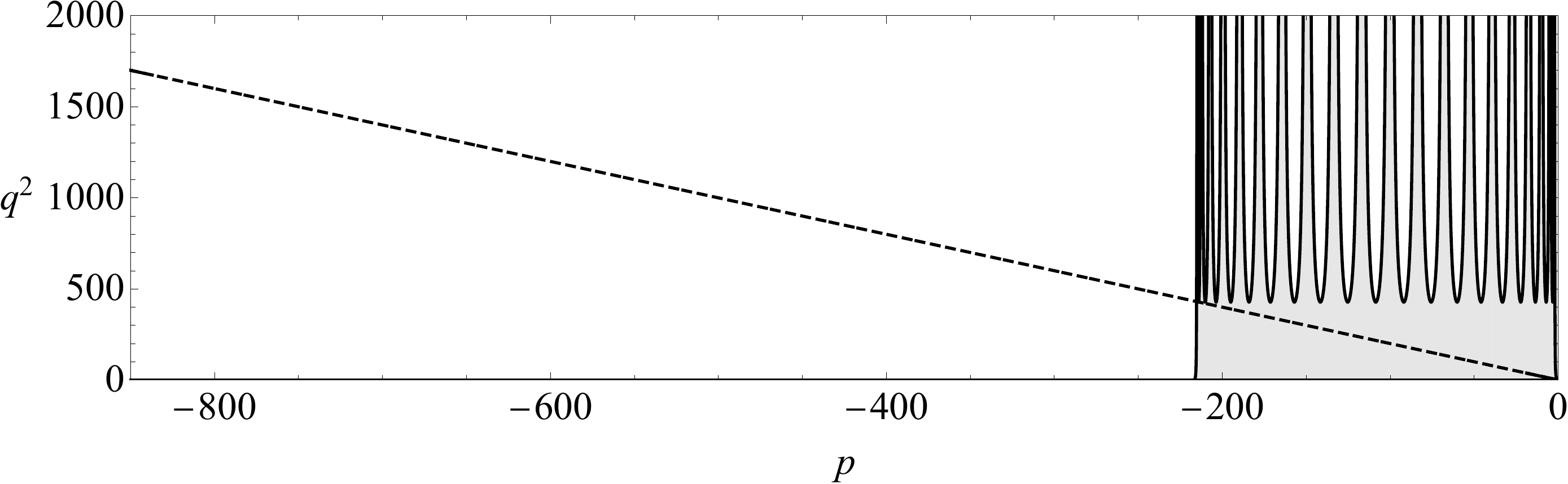}
			\caption{Standard S-ROCK method ($s=20$, $\eta=6.95$).}
		\end{subfigure}\\%%[-1ex] %gillesmodif
		\begin{subfigure}[t]{0.9\textwidth}
			\centering
			\includegraphics[scale=0.43]{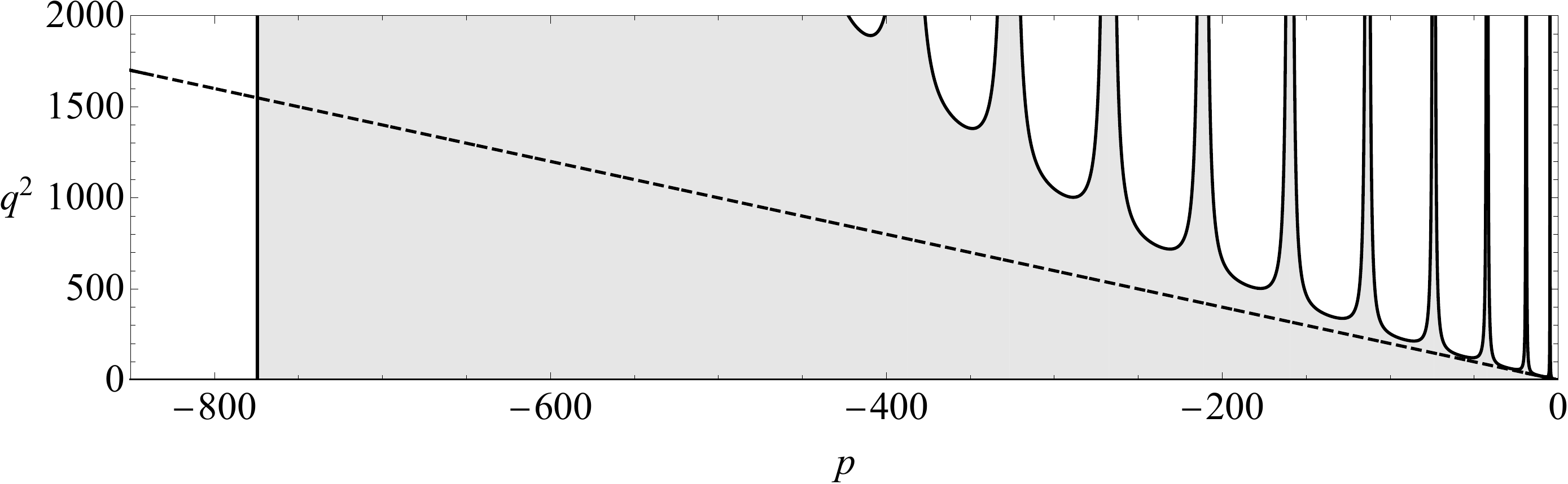}
			\caption{New SK-ROCK method ($s=20$, $\eta=0.05$).}
		\end{subfigure}
		\caption{
			Mean-square stability domains of the standard and new stochastic Chebyshev methods in the $p$--$q^2$ plane for $s=7,20$ stages, respectively. The dashed lines corresponds to the upper boundary $q^2=-2p$ of the real mean-square stability domain $\calS\cap\R^2$ of the exact solution.
			\label{fig:stabstoplot}}
	\end{figure}
	
	In the context of stiff SDEs, a relevant stability concept is that of mean-square stability.
	A test problem widely used in the literature is \cite{SaM96,Hig00,BBT04,Toc05} ,
	\begin{equation} \label{eq:lintest}
	dX(t)= \lambda X(t)dt + \mu X(t) dW(t),\qquad X(0)=1,
	\end{equation}
	in dimensions $d=m=1$ with fixed complex parameters $\lambda,\mu$.
	Note that other stability test problem in multiple dimensions are also be considered in \cite{BuC10} and references therein.
	The exact solution of \eqref{eq:lintest} is called mean-square stable if $\lim_{t\rightarrow\infty}{\mathbb E}\big(|X(t)|^2\big)=0$
	and this holds if and only if $(\lambda,\mu) \in \calS^{MS}$,
	where
	$$
	\calS^{MS} = \big \{(\lambda,\mu)\in\mathbb{C}^2\ ;\ \Re(\lambda)+\frac{1}{2}|\mu|^2<0\big\}.
	$$
	Indeed, the exact solution of \eqref{eq:lintest} is given by $X(t)=\exp((\lambda+\frac12\mu^2) t+\mu W(t))$, and an application of the the It\^o formula yields
	$
	\IE(|X(t)|^2)=\exp((\Re (\lambda)+\frac12\mu^2) t)
	$
	which tends to zero at infinity if and only if $\Re (\lambda)+\frac12\mu^2<0$.
	We say that a numerical scheme $\{X_n\}$ for the test problem \eqref{eq:lintest} is mean-square stable if and only if $\lim_{n\rightarrow \infty}\IE(|X_n|^2) =0$.
	For a one-step integrator applied to the test SDE \eqref{eq:lintest}, we obtain in general a induction of the form
	\begin{equation} \label{eq:difference}
	X_{n+1}= R(p,q,\xi_n)X_{n},
	\end{equation}
	where $p=\lambda h, q =\mu \sqrt{h}$, and $\xi_n$ is a random variable
	(e.g.
	a Gaussian
	$\xi_n\sim \mathcal{N}(0,1)$ 
	or a discrete random variable).
	Using $\IE(|X_{n+1}|^2)= \IE(|R(p,q,\xi_n)|^2) \IE(|X_{n}|^2),$ we obtain the mean-square stability condition
	\cite{SaM96,Hig00}
	\begin{equation} \label{equ:estab_num_dom}
	\lim_{n \rightarrow \infty} \IE(|X_{n}|^{2})=0
	\iff (p,q)\in {\calS_{num}},
	\end{equation} 
	where we define ${\mathcal S_{num}}=\left \{(p,q)\in\mathbb{C}^2\ ;  \IE|R(p,q,\xi)|^{2}<1 \right\}$.
	The function $R(p,q,\xi_n)$ is called the stability function of the one-step integrator.
	For instance, the stability function of the Euler-Maruyama method \eqref{eq:EM} reads $R(p,q,\xi)=1+p+q\xi$ and we have $\IE(|R(p,q,\xi)|^2)=(1+p)^2 + q^2.$
	
	We say that a numerical integrator is mean-square $A$-stable if $\calS^{MS} \subset \calS_{num}$.
	This means that the numerical scheme applied to \eqref{eq:lintest} is mean-square stable for all 
	all $h>0$ and all $(\lambda,\mu) \in \calS^{MS}$ for which the exact solution of \eqref{eq:lintest} is mean-square stable. An explicit Runge-Kutta type scheme cannot however be mean-square stable because its stability domain $\calS_{num}$ is necessary bounded along the $p$-axis.
	Following \cite{AbC08,AbL08}, we consider the following portion of the true mean-square stability domain
	\begin{equation}
	\label{equ:estab_num_ell}
	\mathcal{S}_{a}  = \{(p,q) \in (-a,0) \times \R\ ;\ p+\frac12 |q|^2 < 0\},
	\end{equation}
	and define for a given method 
	\begin{equation}
	\label{eq:defL}
	L=\sup\{a>0\ ;\ {\mathcal{S}_{a} \subset \mathcal{S}_{num}  } \}.
	\end{equation}
	We search for explicit schemes for which the length $L$ of the stability domain is large.
	For example, for the classical S-ROCK method \cite{AbL08}, the value $\eta=3.98$ is the optimal damping maximising $L$ for $s=7$ stages and we can see in Figure \ref{fig:stabdetplot} that this damping reduces significantly the stability domain compared to the optimal deterministic domain.

	%In Figure \ref{fig:stabdetplot}(a),(b), we plot the complex stability domain $\{p\in \C\,;\, |R_{s,\eta}(p)| \leq 1\}$
	%for $s=7$ stages and different values $\eta=0$, $\eta=0.05$ and $\eta=3.98$, respectively.
	%We also plot in Figure \ref{fig:stabdetplot}(d) the corresponding stability function $R_{s,\eta}(p)$ as a function of $p$ real, to illustrate that 
	%the stability domain along the negative real axis corresponds to the values for which $|R_{s,\eta}(p)|\leq1$.
	%We observe that in the absence of damping ($\eta=0$), the stability domain includes the large real interval $[-2\cdot s^2,0]$ of width $2\cdot 7^2=98$. However
	%for all $p$ that are a local extrema of the stability function, where 
	%$|R_{s,\eta}(p)|=1$, the stability domain is very thin and does not include a neighbourhood close to the negative real axis.
	%To make the scheme robust with respect to small perturbations of the eigenvalues, it is therefore needed to add some damping is necessary
	%and a typical values is $\eta=0.05$, see for instance the review \cite{Abd13c}.
	%The advantage is that the stability domain now includes a neighbourhood of the negative real axis portion. The price of this improvement is a slight reduction
	%of the stability domain size $C_\eta s^2$, where $C_\eta \simeq 2-\frac43 \eta$. 

	The new S-ROCK method, denoted SK-ROCK   (for stochastic second kind orthogonal Runge-Kutta-Chebyshev method) introduced in this paper is defined as
	\begin{eqnarray}
%	G&=& \sum_{r=1}^mg^r(X_0)\Delta W_j \label{eq:defG}\\
	K_0&=&X_0 \nonumber\\
	K_1&=&X_0+\mu_1hf(X_0+\nu_1 Q) +\kappa_1Q \nonumber\\
	%K_1&=&X_0+(\mu_1+\alpha)hf(X_0+\frac{1}{2}G) - \alpha hf(X_0-\frac{1}{2}G) +\frac{1}{\omega_0}G\\
	K_i&=&\mu_ihf(K_{i-1})+\nu_iK_{i-1}+\kappa_iK_{i-2},\quad i=2,\ldots,s.\nonumber\\
	X_1&=&K_s, \label{eq:newmethod}
	\end{eqnarray}
	where
	$
	Q=\sum_{r=1}^mg^r(X_0)\Delta W_r, %\label{eq:defG}
	$
	and $\mu_1=\omega_1/\omega_0,\nu_1={s \omega_1}/{2},\kappa_1={s \omega_1}/{\omega_0}$ and $\mu_i,\nu_i,\kappa_i,~i=2,\ldots,s$ are given by \eqref{coeff}, with a fixed small damping parameter $\eta$.
	In the absence of noise ($g^r=0,r=1,\ldots, m$, deterministic case), this method coincides with the standard deterministic order $1$ Chebychev method, see the review \cite{Abd13c}.
	We observe that the new class of methods \eqref{eq:newmethod} is closely related to the
	standard S-ROCK method \eqref{srock1}.
	Comparing the two schemes \eqref{eq:newmethod} and \eqref{eq:stdmethod},
	the two differences are on the one hand that the noise term is computed at the first internal stage $K_1$ for \eqref{eq:newmethod}, whereas it is computed at the final stage in \eqref{eq:stdmethod}, and on the other hand, for the new method \eqref{eq:newmethod} the damping parameter $\eta$ involved in \eqref{eq:omega}
	is fixed and small independently of $s$ (typically $\eta=0.05$), whereas for the standard method \eqref{eq:stdmethod}, the damping $\eta$ is an increasing function of $s$, optimized numerically for each number of stages $s$.

	If we apply the above scheme \eqref{eq:newmethod} to  the linear test equation \eqref{eq:lintest}, 
we obtain 	$$
		X_{n+1} = R(p,q,\xi_n) X_n, 
		$$
where
 \begin{equation} \label{eq:RAB}
		\IE(|R(p,q,\xi)|^2) = A(p)^2 + B(p)^2 q^2,
		\end{equation}
		and
		$$
		A(p)=\frac{T_s(\omega_0+\omega_1 p)}{T_s(\omega_0)} \qquad
		B(p)=\frac{U_{s-1}(\omega_0+\omega_1 p)}{U_{s-1}(\omega_0)}(1+ \frac{\omega_1}2 p)
		$$
		correspond to the drift and diffusion contributions, respectively. 
The above stability function  (see Lemma \ref{lemma:stabfunction} in Section \ref{sec:analysis}) is obtained by using the recurrence relation for the first kind Chebyshev polynomials \eqref{eq:recTU1} and the similar recurrence relation for the second kind Chebyshev polynomials
	\begin{equation} \label{eq:recTU2}
		U_j(p) = 2p U_{j-1}(p) - U_{j-2}(p),\qquad
		\end{equation}
		where $U_0(p)= 1,U_1(p)=2p$.
		Notice that the relation $T_s'(p)=s U_{s-1}(p)$ between first and second kind Chebyshev polynomials will be repeatedly used in our analysis.

	In Figure \ref{fig:stabstoplot}(b)(d), we plot the mean-square stability domain of the SK-ROCK method for $s=7$ and $s=20$ stages, respectively and the same small damping $\eta=0.05$ as for the deterministic Chebyshev method. We observe that the stability domain has length $L_s\simeq (2-\frac43 \eta)s^2$.
	For comparison, we also include in Figure \ref{fig:stabstoplot}(a)(c) the mean-square stability domain of the standard S-ROCK method with smaller stability domain size $L_s\simeq 0.33\cdot s^2$.

In Figure \ref{fig:RAB}, we plot the stability function $\IE(|R(p,q,\xi)|^2)$ in \eqref{eq:RAB} as a function
		of $p$ for various scaling of the noise for $s=7$ stages and damping $\eta=0.05$. 
		We see that it is bounded by $1$ for $p\in(-2(1-\frac23 \eta)s^2,0)$ which is proved asymptotically in Theorem \ref{thm:stabsize}.
		The case $q=0$ corresponds to the deterministic case, and we see in Figure \ref{fig:RAB}(a), the polynomial $\IE(|R(p,0,\xi)|^2)=A(p)^2$.
		Noticing that $\IE(|R(p,q,\xi)|^2)$ is an increasing function of $q$, the case $q^2=-2p$ represented in Figure \ref{fig:RAB}(c) corresponds to the upper border of the stability domain $\calS_L$ defined in \eqref{equ:estab_num_ell} (note that this is the stability function value along the dashed boundary in Figure \ref{fig:stabstoplot}), while the scaling $q^2=-p$ in Figure \ref{fig:RAB}(c) is an intermediate regime. In Figures \ref{fig:RAB}(b)(c), we also include the drift function $A(p)^2$ (red dotted lines) and diffusion function $B(p)^2 q^2$ (blue dashed lines), and it can be observed that their oscillations alternate, which means that any local maxima of one function is close to a zero of the other function.
		This is not surprising because $A(p)$ and $B(p)$ are related to the first kind and second kind Chebyshev polynomials, respectively, corresponding to the cosine and sine functions.
		This also explains how a large mean-square stability domain can be achieved by the new SK-ROCK method \eqref{eq:newmethod} with a small damping parameter $\eta$, in contrast to the standard S-ROCK method \eqref{eq:stdmethod} from \cite{AbL08} that uses a large and $s$-dependent damping parameter $\eta$ with smaller stability domain size $L_s\simeq 0.33\cdot s^2$(see Figures \ref{fig:RAB}(a)(c)).
%
%
%	In Figure \ref{fig:stabstoplot}(b)(d), we plot the mean-square stability domain of the new SROCK method for $s=7$ and $s=20$ stages, respectively and the same small damping $\eta=0.05$ as for the deterministic Chebyshev method. We observe that the stability domain has length $\simeq (2-\frac43 \eta)s^2$ incontrast with the mean-square stability domain of the standard SROCK method with smaller stability domain size $L_s\simeq 0.33\cdot s^2$ (see Figure \ref{fig:stabstoplot}(a)(c))}.
\begin{figure}[tb]
		%\small
		\centering
		\begin{subfigure}[t]{0.9\textwidth}
			\centering
			$\IE(|R(p,q,\xi)|^2)={\color{red} A(p)^2}+{\color{blue} B(p)^2q^2}$\\
			\includegraphics[width=0.9\linewidth]{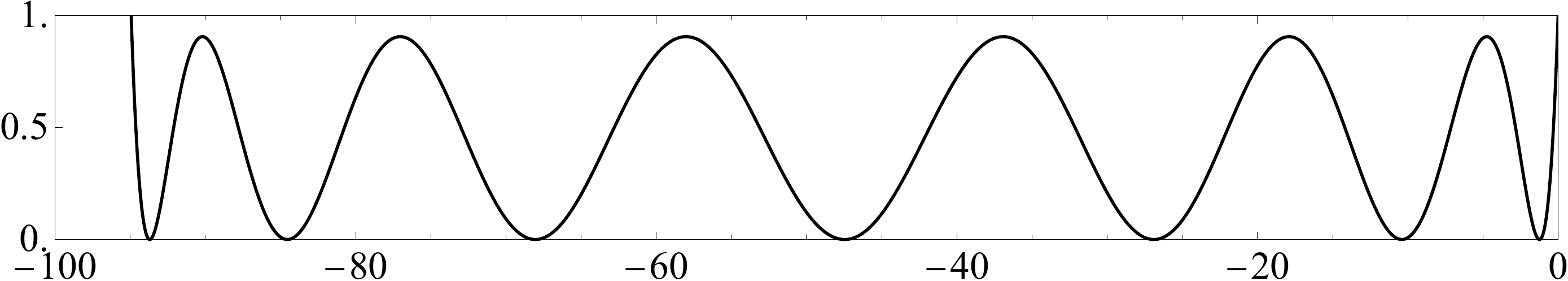}
			\caption{$q=0$.}
		\end{subfigure}\\[-1ex]
		\begin{subfigure}[t]{0.9\textwidth}
			\centering
			\includegraphics[width=0.9\linewidth]{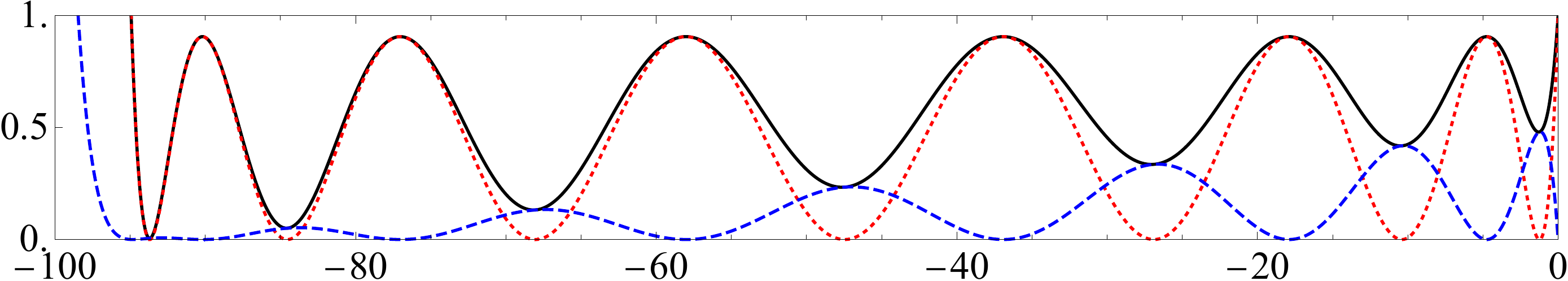}
			\caption{$q^2=-p$.}
		\end{subfigure}\\[-1ex]
		\begin{subfigure}[t]{0.9\textwidth}
			\centering
			\includegraphics[width=0.9\linewidth]{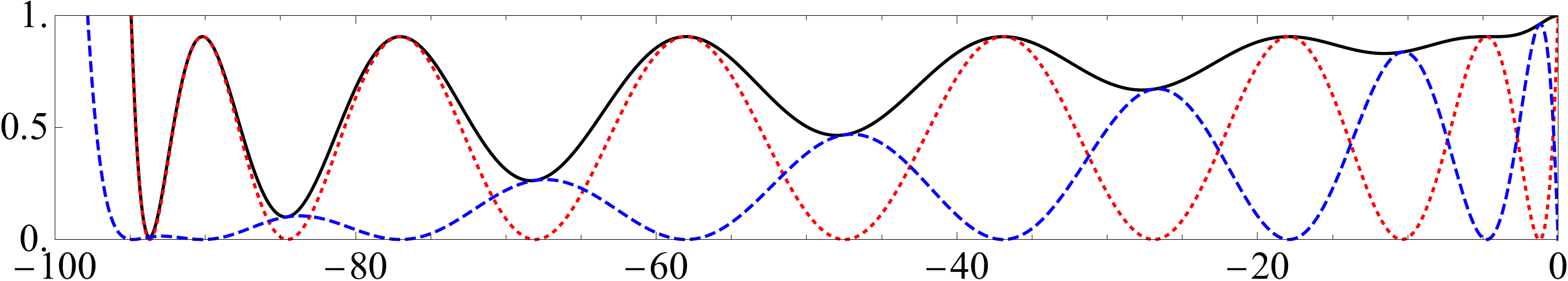}
			\caption{$q^2=-2p$.}
		\end{subfigure}
		\caption{ \label{fig:RAB}
			Stability function $\IE(|R(p,q,\xi)|^2)=A(p)^2+B(p)^2q^2$ as a function of $p$ in \eqref{eq:RAB} (solid black lines) of the new SK-ROCK method, $s=7,\eta=0.05$, for various noise scalings $q^2=0,-p,-2p$, respectively.
			We also include the drift contribution $A(p)^2$ (red dotted lines) and diffusion contribution $B(p)^2 q^2$ (blue dashed lines).}
	\end{figure}

	\section{Mean-square stability analysis}
	\label{sec:analysis}
	
	In this section, we prove asymptotically that the new SK-ROCK methods have an extended mean-square stability domain with size $Cs^2$	growing quadratically as a function of the number of internal stages $s$, where the constant $C \geq 2-\frac43\eta$ is the same as the optimal constant of the standard Chebyshev method in the deterministic case, using a fixed and small damping parameter $\eta$.
	
	\begin{lemma} \label{lemma:stabfunction} 
		Let $s\geq 1$ and $\eta\geq 0$.
		Applied to the linear test equation $dX=\lambda X dt + \mu X dW$, the scheme \eqref{eq:newmethod} yields
		$$
		X_{n+1} = R(\lambda h,\mu \sqrt h,\xi_n) X_n 
		$$
		where $p=\lambda h, q=\mu \sqrt h$,  $\xi_n \sim \calN(0,1)$ is a Gaussian variable and the stability function given by
		\begin{equation} \label{eq:defR}
		R(p,q,\xi) = \frac{T_s(\omega_0+\omega_1 p)}{T_s(\omega_0)} 
		+ \frac{U_{s-1}(\omega_0+\omega_1 p)}{U_{s-1}(\omega_0)}(1+ \frac{\omega_1}2 p) q \xi.
		\end{equation}
	\end{lemma}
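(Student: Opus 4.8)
The plan is to insert the linear test equation \eqref{eq:lintest} into the scheme \eqref{eq:newmethod} and to track separately how the contributions of $X_n$ and of the noise $Q=\sum_{r=1}^m g^r(X_n)\Delta W_r = \mu\sqrt h\,\xi_n X_n$ propagate through the internal stages. First I would observe that for the test equation $hf(K_{i-1})=pK_{i-1}$ with $p=\lambda h$, so every internal stage is affine in $X_n$ and in $Q$; write $K_i = a_i(p)X_n + b_i(p)Q$. The coefficients $a_i$ are precisely those produced by the deterministic Chebyshev method \eqref{eq:Cheb1} applied to $dX/dt=\lambda X$, so using the recurrence \eqref{eq:recTU1} one recovers $a_s(p)=T_s(\omega_0+\omega_1 p)/T_s(\omega_0)=A(p)$. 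The only point needing care here is the modified first stage, $K_1=(1+\mu_1 p)X_n+(\mu_1\nu_1 p+\kappa_1)Q$, which does not obey the generic three-term recurrence but instead supplies the initial data.

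The heart of the argument is the diffusion coefficient $b_i$. For $i\ge 2$ it satisfies the same homogeneous three-term recurrence as $K_i$, namely $b_i=(\mu_i p+\nu_i)b_{i-1}+\kappa_i b_{i-2}$, with initial values $b_0=0$ and $b_1=\mu_1\nu_1 p+\kappa_1=\tfrac{s\omega_1}{\omega_0}\big(1+\tfrac{\omega_1}{2}p\big)$ obtained from $K_1$ and the data $\mu_1=\omega_1/\omega_0$, $\nu_1=s\omega_1/2$, $\kappa_1=s\omega_1/\omega_0$. Substituting the coefficient formulas \eqref{coeff} and multiplying through by $T_i(\omega_0)$, I would check that the auxiliary sequence $\beta_i:=T_i(\omega_0)\,b_i$ obeys the clean recurrence $\beta_i=2(\omega_0+\omega_1 p)\beta_{i-1}-\beta_{i-2}$, which is exactly the Chebyshev recurrence \eqref{eq:recTU1}--\eqref{eq:recTU2} evaluated at $\omega_0+\omega_1 p$. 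Its solution space is spanned by $T_i(\omega_0+\omega_1 p)$ and $U_{i-1}(\omega_0+\omega_1 p)$; since $\beta_0=0$ and $\beta_1=s\omega_1(1+\tfrac{\omega_1}{2}p)$, matching initial data (with the convention $U_{-1}=0$, $U_0=1$) gives $\beta_i=s\omega_1\big(1+\tfrac{\omega_1}{2}p\big)\,U_{i-1}(\omega_0+\omega_1 p)$.

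It then remains to undo the normalization. Dividing by $T_s(\omega_0)$ and using $\omega_1=T_s(\omega_0)/T'_s(\omega_0)$ from \eqref{eq:omega} together with $T'_s=sU_{s-1}$, one has $s\omega_1=T_s(\omega_0)/U_{s-1}(\omega_0)$, hence
$$
b_s(p)=\frac{\beta_s}{T_s(\omega_0)}=\frac{U_{s-1}(\omega_0+\omega_1 p)}{U_{s-1}(\omega_0)}\Big(1+\frac{\omega_1}{2}p\Big)=B(p).
$$
Writing the Brownian increment as $\Delta W=\sqrt h\,\xi_n$ with $\xi_n\sim\calN(0,1)$ gives $Q=q\xi_n X_n$, so $X_{n+1}=K_s=a_s(p)X_n+b_s(p)Q=\big(A(p)+B(p)q\xi_n\big)X_n$, which is \eqref{eq:defR}. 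The computation is elementary throughout; the one step I expect to be the genuine obstacle (in the sense of being non-obvious rather than technically hard) is recognizing that the substitution $\beta_i=T_i(\omega_0)b_i$ collapses the stage recurrence to the standard second-kind Chebyshev recurrence in the variable $\omega_0+\omega_1 p$ — this is precisely what pins $b_s$ down to a ratio of $U_{s-1}$ values rather than to some less tractable polynomial, and it mirrors the mechanism behind the deterministic stability function.
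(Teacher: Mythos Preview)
Your proposal is correct and follows essentially the same approach as the paper: both arguments linearize the stages as $K_i=a_i(p)X_n+b_i(p)Q$, identify $a_i$ with the first-kind Chebyshev quotient from the deterministic method, and use the common three-term recurrence (with the modified initial data coming from $K_1$) to pin $b_i$ to a second-kind Chebyshev polynomial. The paper states the closed form for $K_i$ directly and verifies it by induction, whereas you make the mechanism explicit through the substitution $\beta_i=T_i(\omega_0)b_i$, but this is the same computation carried out at a different level of detail.
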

	\begin{proof}
	Indeed, we take advantage that $T_j$ and $U_j$ have the same recurrence relations \eqref{eq:recTU1},\eqref{eq:recTU2},	and only the initialization changes with $T_1(x)= x$ and $U_1(x)= 2x$,
		we deduce $Q=X_0 \mu\sqrt{h}\xi$, and we obtain by induction on $i\geq 1$,
		$$
		K_i=\frac{T_i(\omega_0+\omega_1p)}{T_i(\omega_0)}X_0+\frac{U_{i-1}(\omega_0+\omega_1p)}{T_i(\omega_0)} (1+\frac{\omega_1p}{2})s \omega_1Q
		$$
		and we use $T_s'(x)=xU_{s-1}(x)$ and $s\omega_1/T_s(\omega_0)=1/U_{s-1}(\omega_0)$, which yields the result for $X_1=K_s$.
	\end{proof}

	For a positive damping $\eta$, we prove the following main result of this section, showing that a quadratic growth $L\geq (2-4/3\, \eta)s^2$ of the mean-square stability domain defined in \eqref{eq:defL} is achieved for all 	$\eta$ small enough and all stage number $s$ large enough.
	
	\begin{theorem} \label{thm:stabsize}
		There exists $\eta_0>0$ and $s_0$ such that for all $\eta \in [0,\eta_0]$ and all $s\geq s_0$,
		%\modg{$L \geq 2\omega_1^{-1}$, i.e.}\ts 
		for all $p\in [-2\omega_1^{-1},0]$ and $p + \frac 12|q|^2 \leq 0$, we have $\IE(|R(p,q,\xi)|^2) \leq 1$.
	\end{theorem}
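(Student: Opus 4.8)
The plan is to reduce the two--parameter estimate to a one--dimensional inequality, recast it via Chebyshev identities, and then treat an easy ``bulk'' regime separately from a delicate regime near $p=0$. \emph{Reduction.} By \eqref{eq:RAB}, $\IE(|R(p,q,\xi)|^2)=A(p)^2+B(p)^2q^2$ depends on $q$ only through $|q|^2$ and is nondecreasing in it, so under $|q|^2\le-2p$ (recall $p\le0$) it suffices to show $\phi(p):=A(p)^2-2p\,B(p)^2\le1$ for $p\in[-2\omega_1^{-1},0]$. I would substitute $x=\omega_0+\omega_1p\in[\omega_0-2,\omega_0]$ and use $1+\tfrac{\omega_1}{2}p=\tfrac{2+x-\omega_0}{2}$, $-2p=\tfrac{2(\omega_0-x)}{\omega_1}$, $\omega_1\,sU_{s-1}(\omega_0)=T_s(\omega_0)$ to rewrite $\phi(p)\le1$, after clearing $T_s(\omega_0)^2$, as
\[
T_s(x)^2+\tfrac{s^2\omega_1}{2}(\omega_0-x)(2+x-\omega_0)^2U_{s-1}(x)^2\le T_s(\omega_0)^2 .
\]
Inserting $T_s(y)^2-(y^2-1)U_{s-1}(y)^2=1$ (a polynomial identity, from $\cos^2+\sin^2=1$) at $y=x$ and $y=\omega_0$ turns this into
\[
U_{s-1}(x)^2\,\Phi(x)\ \le\ U_{s-1}(\omega_0)^2\,\Phi(\omega_0),\qquad x\in[\omega_0-2,\omega_0],
\]
which I abbreviate $(\dagger)$, where $\Phi(x):=(x^2-1)+\tfrac{s^2\omega_1}{2}(\omega_0-x)(2+x-\omega_0)^2$ and $\Phi(\omega_0)=\omega_0^2-1\ge0$.

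\emph{Trivial parts and asymptotics.} Where $\Phi(x)\le0$, the left side of $(\dagger)$ is $\le0\le$ the right side, so $(\dagger)$ need only be checked where $\Phi(x)>0$. For $\eta=0$ one has $\omega_0=1$, $s^2\omega_1=1$ and $\Phi(x)=-\tfrac12(x-1)^2(x+1)\le0$ on $[-1,1]=[\omega_0-2,\omega_0]$, so the statement is immediate. For $0<\eta\le\eta_0$ I would record the uniform asymptotics coming from $\omega_0=\cosh\zeta$ (so $T_s(\omega_0)=\cosh(s\zeta)$, $U_{s-1}(\omega_0)=\sinh(s\zeta)/\sinh\zeta$): since $\cosh\zeta=1+\eta s^{-2}$ one gets $s\zeta=v+\mathcal{O}(s^{-2})$ with $v:=\sqrt{2\eta}$, whence $T_s(\omega_0)=\cosh v+\mathcal{O}(s^{-2})$, $s^{-1}U_{s-1}(\omega_0)=v^{-1}\sinh v+\mathcal{O}(s^{-2})$, $s^2\omega_1=v\coth v+\mathcal{O}(s^{-2})$, and $s^2\omega_1=s\sinh\zeta\coth(s\zeta)>1$ for $\eta>0$; in particular $2\omega_1^{-1}=\tfrac{2\tanh v}{v}\,s^2\,(1+\mathcal{O}(s^{-2}))=(2-\tfrac43\eta+\mathcal{O}(\eta^2))\,s^2\,(1+\mathcal{O}(s^{-2}))$, the announced stability domain size.

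\emph{The bulk.} On $(-1,1)$ one has the sharp bound $U_{s-1}(x)^2\le(1-x^2)^{-1}$. Hence on $\{\Phi>0\}\cap[\omega_0-2,1-\delta]$, for a cutoff $\delta$ of order $\eta$, $(\dagger)$ follows once $\tfrac{s^2\omega_1}{2}(\omega_0-x)(2+x-\omega_0)^2\le(1-x^2)T_s(\omega_0)^2$; writing $a=1-x>0$, $b=1+x>0$ ($a+b=2$), $\mu=\omega_0-1$, this is $\tfrac{s^2\omega_1}{2T_s(\omega_0)^2}(a+\mu)(b-\mu)^2\le ab$, and since $\tfrac{s^2\omega_1}{2T_s(\omega_0)^2}=\tfrac{v}{\sinh2v}+\mathcal{O}(s^{-2})=\tfrac12-\tfrac{v^2}{3}+\cdots$, modulo $\mathcal{O}(\mu)=\mathcal{O}(\eta s^{-2})$ corrections it reduces to $b\le\tfrac{\sinh2v}{v}+\cdots=2+\tfrac43v^2+\cdots$, true with a definite margin for $b=1+x\le2-\delta$ provided $s$ is large. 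This is where the largeness of $s$ is used.

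\emph{The boundary regime (the crux).} There remains $x\in[1-\delta,\omega_0]$, where $(1-x^2)^{-1}$ is far too crude and $(\dagger)$ is in fact saturated at $x=\omega_0$ (i.e.\ $p=q=0$). Here I would parametrise $x=\cos\phi$ for $x\le1$ and $x=\cosh\psi$ for $x\ge1$, insert the exact formulas $T_s(\cos\phi)=\cos s\phi$, $U_{s-1}(\cos\phi)=\sin(s\phi)/\sin\phi$ and their hyperbolic analogues, and pass to $w=s\phi$ (resp.\ $s\psi$). Keeping, crucially, the favourable $\mathcal{O}(v^2)$ corrections, $(\dagger)$ reduces to elementary profile inequalities such as
\[
\Big(1+\tfrac{w^2}{3}\Big)\sin^2 w\le w^2\ \ (w\in\R),\qquad \sinh^2(tv)\big(t^2+v\coth v\,(1-t^2)\big)\le t^2\sinh^2 v\ \ (0\le t\le1),
\]
which hold with slack $\tfrac1{15}w^6+\mathcal{O}(w^8)$ near $w=0$ (and $\tfrac23w^2-1$ for $|w|\ge\sqrt{3/2}$) and $\tfrac1{15}v^6t^2(1-t^2)^2+\mathcal{O}(v^8)$, respectively. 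The hard part is to show that these slacks, together with the $\mathcal{O}(v^2)$ terms, dominate the $\mathcal{O}(s^{-2})$ errors of the Chebyshev asymptotics and the truncation errors of the trigonometric/hyperbolic expansions; a residual sliver $|p|=\mathcal{O}(\eta)$ about the origin, where this margin is too thin, is dispatched by a direct Taylor expansion of $\phi$ at $p=0$, using $\phi(0)=1$, $\phi'(0)=0$, $\phi''(0)=-\tfrac{16}{15}\eta+\mathcal{O}(\eta^2+s^{-2})<0$. This matching of high--order tangency against the error terms is exactly what forces $\eta_0$ small and $s_0$ large. Combining the trivial part with the bulk and boundary estimates yields $(\dagger)$, hence $\phi\le1$ on $[-2\omega_1^{-1},0]$, hence the theorem.
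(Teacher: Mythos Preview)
Your reduction to the one--variable problem, the rewriting via the Chebyshev identity $T_s(y)^2-(y^2-1)U_{s-1}(y)^2=1$, and the separate treatment of $\eta=0$ all coincide with the paper's proof. In fact, your bulk inequality $\tfrac{s^2\omega_1}{2T_s(\omega_0)^2}(a+\mu)(b-\mu)^2\le ab$ is exactly the paper's condition $Q_s(x)\le1$ rewritten; the paper proves it by observing that $(1-(x-\eta/s^2)^2)/(1-x^2)$ is increasing on the relevant interval and evaluating at the right endpoint, which produces the clean auxiliary estimate $\tfrac{s^2\omega_1}{T_s(\omega_0)^2}\tfrac{1-(1-\omega_1)^2}{1-(\omega_0-\omega_1)^2}\le1$ (Lemma~3.3), whereas you attack the same inequality by direct algebra in $a,b$.

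Where your route genuinely diverges is the choice of splitting point and, consequently, the boundary argument. The paper cuts at $p=-1$, i.e.\ $x=\omega_0-\omega_1$, a fixed point independent of $s$ and $\eta$; you cut at $x=1-\delta$ with $\delta$ of order $\eta$. The paper's choice makes the boundary range $p\in[-1,0]$ bounded, so one may simply pass to the uniform $s\to\infty$ limit (their Lemma~3.2) and obtain an explicit two-variable function $l(\eta,p)$, then finish by checking the single derivative inequality $\partial l/\partial\eta|_{\eta=0}=-2+\alpha(p)^2(2-\tfrac43 p)\le0$ on $[-1,0]$, a short polynomial computation. Your boundary, by contrast, corresponds to $p$ ranging down to roughly $-\delta/\omega_1\sim -\eta s^2$, unbounded in $s$; this is what forces the trig/hyperbolic parametrisation $w=s\phi$, the profile inequalities, and the further Taylor sliver near $p=0$---precisely the part you flag as ``the hard part.'' Your computation $\phi''(0)=-\tfrac{16}{15}\eta+\mathcal{O}(\eta^2+s^{-2})$ is correct (indeed the $s^{-2}$ contribution is $-2/s^2$, also favourable), and the asymptotic errors you write as $\mathcal{O}(s^{-2})$ actually carry an extra factor of $\eta$, so the matching does close. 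But the paper's fixed cut at $p=-1$ replaces this whole apparatus by one uniform limit and one derivative in $\eta$.
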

	
	%new Remark
		\begin{remark} \label{rem:exactetazero}
		We deduce from Theorem \ref{thm:stabsize}, that the mean-square stability domain size 
		\eqref{eq:defL} of SK-ROCK
		grows as $(2-4/3\, \eta)s^2$ which is arbitrarily close to the optimal stability domain size $2s^2$ for $\eta\rightarrow 0$.
		Indeed, for $s\rightarrow \infty$ and all $\eta\leq \eta_0$, we have $2\omega_1^{-1}s^{-2} \rightarrow 2\frac{\tanh(\sqrt{2\eta})}{\sqrt{2\eta}} = 2-4/3\, \eta + \bigo(\eta^2)$ and for all $s,\eta$, we have $2\omega_1^{-1}\geq (2-4/3\,\eta) s^2$.
		In addition, in the special case of a zero damping ($\eta=0$), 
		%the size \eqref{eq:defL} of the mean-square stability domain maximizes to		$L=2s^2$. 
		%Indeed, 
		the stability function \eqref{eq:defR} reduces to
		$$
		R(p,q,\xi) = T_s(1+\frac p{s^2}) + s^{-1}U_{s-1}(1+\frac p{s^2})(1+\frac p{2s^2}) q \xi,
		$$
		and it holds 
		$$
		\IE(|R(p,q, \xi)|^2) \leq 1,
		$$
		for all $s\geq1$, for all $p\in[-2s^2,0]$ and all $q \in \IC$ such that $p+|q|^2/2 \leq 0$.
		Indeed, for $p\in[-2s^2,0]$, we denote $\cos\theta = 1+\frac p{s^2} \in[-1,1]$ and using $$T_s(\cos(\theta))=\cos(s\theta),\qquad \sin(\theta) U_{s-1}(\cos(\theta))=\sin(s\theta),$$ we obtain
		$$
		\IE(|R(p,q, \xi)|^2) \leq \IE(|R(p,\sqrt{-2p})  = \cos(s\theta)^2+ \sin(s\theta)^2 \frac{1+\cos \theta}2 \leq 1,
		$$
		where we used $-2p=2s^2(1-\cos\theta)$, $1+\frac p{2s^2} = \frac{1+\cos\theta}2$ and $\sin^2 \theta = (1+\cos \theta)(1-\cos\theta)$.
	\end{remark}

	Before we prove Theorem \ref{thm:stabsize}, we have the following lemma, %with proof postponed to the appendix 
	see \cite{HoS80} for analogous results.
	\begin{lemma}\label{limits}
		We have the following convergences as $s\rightarrow \infty$ to analytic functions\footnote{%
			Note that for $z<0$, we can use $\sqrt{2z}=i\sqrt{-2z}$ and obtain $T_s(1+z/s^{2})\rightarrow \cos(\sqrt{-2z})$ for $s\rightarrow \infty$ and similarly $\alpha(z)=\sinc(\sqrt{-2z})$.} 
		uniformly for $z$ in any bounded set of the complex plan,
		\begin{align*}
		T_s(1+z/s^{2}) &\to \cosh{\sqrt{2z}},\\
		s^{-1}U_{s-1}(1+z/s^{2})&\to\alpha(z):= \frac{\sinh{\sqrt{2z}}}{\sqrt{2z}},\\
		\omega_1s^{2}&\to \Omega(\eta)^{-1}, \quad \Omega(\eta):=\frac{\tanh{\sqrt{2\eta}}}{\sqrt{2\eta}}. 
		%\omega_1^{-1}s^{-2}&\to C:=\frac{\tanh{\sqrt{2\eta}}}{\sqrt{2\eta}}.
		\end{align*}
	\end{lemma}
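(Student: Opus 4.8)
The plan is to derive the first two limits from a closed-form factorization of the Chebyshev polynomials and then to read off the third as an immediate consequence. For every complex $x$ one has the representations
\[
T_s(x)=\tfrac12\bigl(w^s+w^{-s}\bigr),\qquad U_{s-1}(x)=\frac{w^s-w^{-s}}{w-w^{-1}},\qquad w=x+\sqrt{x^2-1},
\]
where the two branches of $\sqrt{x^2-1}$ merely interchange $w\leftrightarrow w^{-1}$, so both right-hand sides are single-valued polynomials, independent of the branch chosen.

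First I would substitute $x=1+z/s^2$ and expand the root. Since $x^2-1=\frac{2z}{s^2}\bigl(1+\frac{z}{2s^2}\bigr)$, choosing $\sqrt{x^2-1}=\frac{\sqrt{2z}}{s}\sqrt{1+\frac{z}{2s^2}}$ with a fixed branch of $\sqrt{2z}$ and the principal branch of the inner root (legitimate for large $s$ since its argument is near $1$) gives $w=1+\frac{a_s}{s}$ with $a_s=\frac{z}{s}+\sqrt{2z}\,\sqrt{1+\frac{z}{2s^2}}$, and $a_s\to\sqrt{2z}$ uniformly on any bounded set. The key elementary fact is then that $w^s=(1+a_s/s)^s\to e^{\sqrt{2z}}$ uniformly: writing $s\log(1+a_s/s)=a_s+sR(a_s/s)$ with $R(u)=\log(1+u)-u=\bigo(u^2)$ and using a uniform bound $|a_s|\le M$, one gets $|sR(a_s/s)|\le CM^2/s\to0$, and exponentiating the principal logarithm (well defined because $w\to1$) yields the claim; likewise $w^{-s}\to e^{-\sqrt{2z}}$. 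Inserting these into the two closed forms, together with $w-w^{-1}=2\sqrt{x^2-1}$ so that $s(w-w^{-1})=2\sqrt{2z}\,\sqrt{1+\frac{z}{2s^2}}\to2\sqrt{2z}$, produces
\[
T_s(1+z/s^2)\to\tfrac12\bigl(e^{\sqrt{2z}}+e^{-\sqrt{2z}}\bigr)=\cosh\sqrt{2z},\qquad
s^{-1}U_{s-1}(1+z/s^2)\to\frac{e^{\sqrt{2z}}-e^{-\sqrt{2z}}}{2\sqrt{2z}}=\alpha(z),
\]
uniformly on bounded sets; both limits are even in $\sqrt{2z}$, hence entire in $z$ (the singularity of $\alpha$ at $z=0$ being removable), so there is no genuine branch ambiguity. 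The third limit is then immediate: using \eqref{eq:omega} and $T_s'=sU_{s-1}$ we have $\omega_1=T_s(\omega_0)/\bigl(sU_{s-1}(\omega_0)\bigr)$, whence $\omega_1 s^2=T_s(\omega_0)\big/\bigl(s^{-1}U_{s-1}(\omega_0)\bigr)$ with $\omega_0=1+\eta/s^2$, and applying the first two convergences at the fixed point $z=\eta$ gives $\omega_1 s^2\to\cosh\sqrt{2\eta}\big/\alpha(\eta)=\sqrt{2\eta}\,\cosh\sqrt{2\eta}/\sinh\sqrt{2\eta}=\sqrt{2\eta}/\tanh\sqrt{2\eta}=\Omega(\eta)^{-1}$, the quotient being legitimate since $\alpha(\eta)\neq0$ for $\eta\ge0$.

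I expect the only real obstacle to be the uniformity and branch bookkeeping in the first step: one must check that the principal square root and logarithm may be used consistently once $s$ is large (so that $w$ stays in a fixed small disc around $1$) and that the remainder estimate $sR(a_s/s)\to0$ is uniform on the prescribed bounded set. A clean alternative that sidesteps branches entirely is to use the hypergeometric representations $T_s(1+z/s^2)={}_2F_1(-s,s;\tfrac12;-\tfrac{z}{2s^2})$ and $s^{-1}U_{s-1}(1+z/s^2)={}_2F_1(-(s-1),s+1;\tfrac32;-\tfrac{z}{2s^2})$: the $z^k$-coefficient equals $\frac{1}{2^k k!}\,(1/2)_k^{-1}\prod_{j=0}^{k-1}(1-j^2/s^2)$ (respectively with $(3/2)_k$ and $\prod_{j=1}^{k}$), which tends to $\frac{2^k}{(2k)!}$ (respectively $\frac{2^k}{(2k+1)!}$), the Taylor coefficients of $\cosh\sqrt{2z}$ and $\alpha(z)$, while the factors $\prod(1-j^2/s^2)\in[0,1]$ furnish a Weierstrass majorant that yields locally uniform convergence at once.
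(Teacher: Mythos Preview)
Your proof is correct, and your primary route is genuinely different from the paper's. The paper argues by power-series comparison: it writes $s^{-1}U_{s-1}(1+\eta/s^2)$ and $\alpha(\eta)$ as explicit Taylor series in $\eta$ using the identity $sU_{s-1}^{(n-1)}(1)=T_s^{(n)}(1)=\prod_{k=0}^{n-1}\frac{s^2-k^2}{2k+1}$, subtracts term by term, and invokes dominated convergence on the coefficients (with the exponential series as majorant). Your main argument instead exploits the closed form $T_s(x)=\tfrac12(w^s+w^{-s})$, $U_{s-1}(x)=(w^s-w^{-s})/(w-w^{-1})$ and reduces everything to the elementary limit $(1+a_s/s)^s\to e^{\sqrt{2z}}$; this is shorter and explains transparently why $\cosh$ and $\sinh$ appear, at the cost of a small amount of branch bookkeeping (which you handle correctly by noting the expressions are even in $\sqrt{2z}$ and that $w$ stays near $1$ for large $s$). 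Your alternative via the hypergeometric coefficients and a Weierstrass majorant is essentially the paper's argument rephrased, so you have in fact recovered their method as a fallback. Either approach yields the uniform convergence on bounded sets; the third limit, as you note, follows immediately from the first two and $\omega_1=T_s(\omega_0)/(sU_{s-1}(\omega_0))$.
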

%\moda{I would suggest another function than $C$ in the above lemma and subsequent analysis. $C$ is usually a non-specific constant also I would add the dependency on $\eta$. Why not $\Omega(\eta)$ ? Also the second statement is a bit awkward I would state   $\omega_1 s^{2}\to \Omega(\eta):=\frac{\sqrt{2\eta}}{\tanh{\sqrt{2\eta}}}$} 
	\begin{proof}
		We prove the uniform convergence of the first limit only, since it will be useful in the proof of the next theorem. The others can be proved in a similar way.
		
		First, let us write the two functions of $\eta$ in Taylor series,
		\begin{eqnarray*}
			s^{-1}U_{s-1}(\omega_0)&=&%s^{-1}U_{s-1}(1+\frac\eta{s^2})=
			s^{-1}\sum_{n=0}^{s-1}\frac{U_{s-1}^{(n)}(1)}{n!}(\frac\eta{s^2})^{n}
			%&=&s^{-2}\sum_{n=0}^{s-1}\frac{T_{s}^{(n+1)}(1)}{n!}(\frac\eta{s^2})^{n}\\
			%&=&\sum_{n=0}^{s-1}\frac{T_{s}^{(n+1)}(1)}{n!}\frac{\eta^n}{s^{2(n+1)}}\\
			=\sum_{n=0}^{s-1}\left( \frac1{n!}\prod_{k=0}^{n}(1-\frac{k^2}{s^2})\prod_{k=0}^{n}\frac{1}{2k+1}\right)\eta^n,\\
						\alpha(\eta)&=&\sum_{n=0}^{\infty}\frac{2^n\eta^n}{(2n+1)!}
			=\sum_{n=0}^{\infty}\left(\frac{1}{n!}\prod_{k=0}^{n}\frac{1}{2k+1}\right)\eta^n,
		\end{eqnarray*}	
		where we used the formula
$sU_{s-1}^{(n-1)}(1)=T_s^{(n)}(1)=\prod_{k=0}^{n-1}\frac{s^2-k^2}{2k+1}$.
Subtracting the above two identities, we deduce
		%Now let us prove the uniform convergence:
		\begin{eqnarray} \label{eq:subdiff}
			\sup_{\eta\in[-\eta_0,\eta_0]}\left|s^{-1}U_{s-1}(\omega_0)-\alpha(\eta)\right|
			%&=&\sup_{\eta\in[-\eta_0,\eta_0]} \left|\sum_{n=0}^{s-1}\left( \frac1{n!}\prod_{k=0}^{n}(1-\frac{k^2}{s^2})\prod_{k=0}^{n}\frac{1}{2k+1}\right)\eta^n-
			%\sum_{n=0}^{\infty}\left(\frac{1}{n!}\prod_{k=0}^{n}\frac{1}{2k+1}\right)\eta^n\right| \nonumber\\
			&\leq& \sum_{n=0}^{s-1}\frac{\eta_0^n}{n!}\left(1-\prod_{k=0}^{n}(1-\frac{k^2}{s^2})\right)\prod_{k=0}^{n}\frac{1}{2k+1}
			+\sum_{n=s}^{\infty}\frac{\eta_0^n}{n!} \nonumber\\
			&\leq&\sum_{n=0}^{s-1}\frac{\eta_0^n}{n!}\left(1-\prod_{k=0}^{n}(1-\frac{k^2}{s^2})\right)\frac{1}{2s-1}
			+\sum_{n=s}^{\infty}\frac{\eta_0^n}{n!}
		\end{eqnarray}
		Noticing that $\frac{\eta_0^n}{n!}\big(1-\prod_{k=0}^{n}(1-\frac{k^2}{s^2})\big)\frac{1}{2s-1}$ converges to zero as $s\rightarrow \infty$ and is bounded
		by $\frac{\eta_0^n}{n!}$ for all integers $s,n$, which is the general term of the convergent series of $\exp(\eta_0)=\sum_{n=0}^{\infty}\frac{\eta_0^n}{n!}$, the Lebesgue dominated convergence theorem implies that \eqref{eq:subdiff}
		converges to zero as $s\rightarrow \infty$, which concludes the proof.
		%
		%\begin{eqnarray*}
			%\sum_{n=0}^{s-1}\frac{\eta_0^n}{n!}\left(1-\prod_{k=0}^{n}(1-\frac{k^2}{s^2})\right)\frac{1}{2s-1}\to 0~as~s\to\infty,
		%\end{eqnarray*}
		%and then the uniform convergence is proved.
	\end{proof}

	\begin{lemma}\label{est}
		For all $\eta$ small enough and all $s$ large enough, we have the following estimate:
		\begin{equation}\label{estimate}
		\frac{s^2 \omega_1}{T_s(w_0)^2} \frac{1-(1-\omega_1)^2}{1-(\omega_0-\omega_1)^2}  \leq 1
		\end{equation}
	\end{lemma}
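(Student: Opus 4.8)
The plan is to reduce \eqref{estimate} to an inequality for a single scalar function of $\eta$ by passing to the limit $s\to\infty$, to establish that inequality by elementary calculus, and then to return to finite large $s$ by quantifying the gap. As a first step I would simplify the second factor: since $1-(1-\omega_1)^2=\omega_1(2-\omega_1)$ and, using $\omega_0=1+\eta/s^2$, $1-(\omega_0-\omega_1)^2=(\omega_1-\eta s^{-2})(2+\eta s^{-2}-\omega_1)$, the left-hand side of \eqref{estimate} equals, after multiplying the numerator and denominator of the second factor by $s^2$,
$$
\Phi_s(\eta)\;:=\;\frac{s^2\omega_1}{T_s(\omega_0)^2}\cdot\frac{(s^2\omega_1)(2-\omega_1)}{(s^2\omega_1-\eta)(2+\eta s^{-2}-\omega_1)}.
$$
By Lemma~\ref{limits}, $s^2\omega_1\to\Omega(\eta)^{-1}=\sqrt{2\eta}/\tanh\sqrt{2\eta}$, $T_s(\omega_0)\to\cosh\sqrt{2\eta}$ and $\omega_1\to 0$ as $s\to\infty$, and the proof of that lemma gives these limits uniformly for $\eta\in[0,\eta_0]$. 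Since $\Omega(\eta)^{-1}-\eta\ge 1-\eta$ is bounded away from $0$, it follows that $\Phi_s(\eta)\to\phi(\sqrt{2\eta})$ uniformly on $[0,\eta_0]$, where, writing $x=\sqrt{2\eta}$ and using $2\sinh x\cosh x=\sinh(2x)$ and $\eta\,\Omega(\eta)=\tfrac{x}{2}\tanh x$,
$$
\phi(x)\;:=\;\frac{x}{\sinh x\cosh x}\cdot\frac{1}{1-\tfrac{x}{2}\tanh x}\;=\;\frac{4x}{\sinh(2x)\bigl(2-x\tanh x\bigr)}
$$
(extended analytically across $x=0$ with $\phi(0)=1$).

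Next I would prove the elementary inequality $\phi(x)\le 1$ for all small $x\ge 0$. Using $\sinh(2x)\tanh x=\cosh(2x)-1$, this is equivalent to $x\bigl(\cosh(2x)+3\bigr)\le 2\sinh(2x)$, i.e., with $y=2x$, to $h(y):=4\sinh y-y\cosh y-3y\ge 0$; here $h(0)=h'(0)=h''(0)=0$ while $h'''(y)=\cosh y-y\sinh y>0$ for $0\le y<y^\ast$, where $y^\ast\approx 1.2$ is the unique positive root of $y\tanh y=1$, so $h\ge 0$ on $[0,y^\ast]$. Hence $\phi(x)\le 1$ for $x\in[0,y^\ast/2]$, that is for all $\eta\in[0,(y^\ast)^2/8]$, an interval comfortably containing the damping values used in practice (e.g. $\eta=0.05$). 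A short Taylor expansion moreover yields the margin $1-\phi(\sqrt{2\eta})=\eta/3+\bigo(\eta^2)$, so one may fix $\eta_0>0$ small enough that $1-\phi(\sqrt{2\eta})\ge\eta/4$ for all $\eta\in[0,\eta_0]$.

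Finally I would transfer the bound back to finite large $s$. The refinement of Lemma~\ref{limits} that makes this work is that the $n=1$ terms cancel in the series appearing in its proof, so in fact $T_s(\omega_0)=\cosh\sqrt{2\eta}+\bigo(\eta^2/s^2)$ and $s^{-1}U_{s-1}(\omega_0)=\alpha(\eta)+\bigo(\eta/s^2)$ uniformly on $[0,\eta_0]$; since $s^2\omega_1=T_s(\omega_0)/\bigl(s^{-1}U_{s-1}(\omega_0)\bigr)$ and every denominator in $\Phi_s(\eta)$ is bounded away from $0$, this gives $\Phi_s(\eta)=\phi(\sqrt{2\eta})+\bigo(\eta/s^2)$ uniformly on $[0,\eta_0]$, with an implied constant $C$ depending only on $\eta_0$. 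Combined with $1-\phi(\sqrt{2\eta})\ge\eta/4$ this yields $1-\Phi_s(\eta)\ge\eta/4-C\eta/s^2\ge 0$ for all $\eta\in[0,\eta_0]$ as soon as $s\ge s_0:=\lceil 2\sqrt{C}\,\rceil$, which is exactly \eqref{estimate}. (As a consistency check, for $\eta=0$ one has $\omega_0=1$, $T_s(1)=1$, $U_{s-1}(1)=s$, hence $s^2\omega_1=1$ and $\Phi_s(0)=1$ exactly for every $s$.) I expect this last step to be the main obstacle: because $\Phi_s(0)=1$ for every $s$, there is no slack at $\eta=0$, so one really needs the error in Lemma~\ref{limits} to carry the factor $\eta$, i.e. to be $\bigo(\eta/s^2)$ rather than merely $\bigo(1/s^2)$, in order to beat the $\bigo(\eta)$ margin uniformly for $\eta$ near $0$.
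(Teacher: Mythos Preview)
Your argument follows the same overall strategy as the paper's proof: pass to the limit $s\to\infty$ via Lemma~\ref{limits} and show the resulting function of $\eta$ is bounded by $1$, with the same leading behaviour $1-\eta/3+\bigo(\eta^2)$. The paper simply records the two limits $\frac{s^2\omega_1}{T_s(\omega_0)^2}\to\frac{2\sqrt{2\eta}}{\sinh(2\sqrt{2\eta})}=1-\tfrac{4}{3}\eta+\bigo(\eta^2)$ and $\frac{1-(1-\omega_1)^2}{1-(\omega_0-\omega_1)^2}\to\frac{1}{1-\eta\,\Omega(\eta)}=1+\eta+\bigo(\eta^2)$, multiplies, and stops.

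Where your write-up adds value is precisely the point you single out as the main obstacle: since $\Phi_s(0)=1$ for every $s$, uniform convergence alone with an $\bigo(1/s^2)$ error does not suffice to conclude $\Phi_s(\eta)\leq 1$ for all $\eta\in[0,\eta_0]$ and $s\geq s_0$. The paper is silent on this passage back to finite $s$; you close the gap by extracting the extra factor of $\eta$ in the error, $\Phi_s(\eta)-\phi(\sqrt{2\eta})=\bigo(\eta/s^2)$, and matching it against the margin $1-\phi(\sqrt{2\eta})\geq \eta/4$. Your reduction of $\phi(x)\leq 1$ to $h(y)=4\sinh y-y\cosh y-3y\geq 0$ is also a cleaner and more explicit variant of the paper's Taylor argument, and gives a concrete range of admissible $\eta$ rather than just ``small enough''.
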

	\begin{proof}[Proof of Lemma \ref{est}]
		Using the Lemma \ref{limits} we have for $s\to\infty$, uniformly for all $\eta\in[0,\eta_0]$,
		\begin{equation*}
		\frac{s^2\omega_1}{T_s(\omega_0)^2}\to\frac{2\sqrt{2\eta}}{\sinh(2\sqrt{2\eta})}~and~\frac{1-(1-\omega_1)^2}{1-(\omega_0-\omega_1)^2}\to\frac{1}{1-\Omega(\eta)\eta}.
		\end{equation*}
		Now if we expand both functions in Taylor series we get:
		\begin{equation}
		\frac{2\sqrt{2\eta}}{\sinh(2\sqrt{2\eta})}=1-\frac43\eta+\bigo(\eta^2),\qquad\frac{1}{1-\Omega(\eta)\eta}=1+\eta+\bigo(\eta^2),
		\end{equation}
		and this implies that for all $s$ large enough and all $\eta\leq \eta_0$,
		\begin{equation}
		\frac{s^2 \omega_1}{T_s(w_0)^2} \frac{1-(1-\omega_1)^2}{1-(\omega_0-\omega_1)^2}\leq (1-\frac43\eta_0+\bigo(\eta_0^2))(1+\eta_0+\bigo(\eta_0^2))=1-\frac13\eta_0+\bigo(\eta_0^2),
		\end{equation}
		which is less than 1 for $\eta_0$ small enough.
	\end{proof}
	
	\begin{remark}
	Numerical evidence suggests that the result of Theorem \ref{thm:stabsize} holds for all $s\geq 1$ and all $\eta\geq 0$.
		Indeed, it can be checked numerically that \eqref{estimate} holds for all $\eta\in(0,1)$ and all $s\geq1$.
	\end{remark}
	
	\begin{proof}[Proof of Theorem \ref{thm:stabsize}.]
		Setting $x=w_0+w_1p$, a calculation yields
		\begin{eqnarray*}
			\IE(|R(p,q,\xi)|^2) &\leq &  \IE(|R(p,\sqrt{-2p},\xi)|^2)\\
			&=& \frac{T_s(x)^2}{T_s(w_0)^2} + \frac{U_{s-1}(x)^2}{U_{s-1}(w_0)^2} (1+\frac {w_1}{2}p)^2 (-2p)
		\end{eqnarray*}
		The proof is conducted in two steps, where we treat separately the cases $p\in [-2\omega_1^{-1},-1]$ and $p\in[-1,0]$. 
		For the first case $p\in [-2\omega_1^{-1},-1]$, which corresponds to $x\in [-1+\eta/s^2,\omega_0-\omega_1]$, we have
		\begin{eqnarray*}
			\IE(|R(p,q,\xi)|^2)  &=& \frac{T_s(x)^2}{T_s(w_0)^2} + \frac{U_{s-1}(x)^2}{U_{s-1}(w_0)^2} \left(1-\frac {w_0-x}{2}\right)^2 2\frac{w_0-x}{w_1}\\
			&=& \frac{T_s(x)^2}{T_s(w_0)^2} + U_{s-1}(x)^2(1-x^2) Q_s(x)
		\end{eqnarray*}
		where we denote
		\begin{eqnarray*}
			Q_s(x) &=& %\frac{T_s'(w_0)}{T_s(w_0)U_{s-1}(w_0)^2} \left(1-\frac {w_0-x}{2}\right)^2 2\frac{w_0-x}{1-x^2}\\
			%&=& 
			\frac{s^2 \omega_1}{T_s(w_0)^2}  \left(\frac {1+x-\frac{\eta}{s^2}}{2}\right) \frac{1-(x-\frac{\eta}{s^2})^2}{1-x^2}
		\end{eqnarray*}
		First, we note that 
		$\frac {1+x-\frac{\eta}{s^2}}{2} \in [0,1-\frac{\omega_1}2]$.
		Next, using $\frac{\eta}{s^2} \leq 2$, we deduce $\frac{d}{dx}\left( \frac{1-(x-\frac{\eta}{s^2})^2}{1-x^2} \right) = \frac{2\eta}{s^2} \frac{1+x^2-\eta/s^2 x}{(1-x^2)^2}\geq %\frac{2\eta}{s^2} \frac{1+x^2-2 x}{(1-x^2)^2} = 
		\frac{2\eta}{s^2} \frac{(1-x)^2}{(1-x^2)^2} \geq 0$.
		Thus, $\frac{1-(x-\frac{\eta}{s^2})^2}{1-x^2}$ is an increasing function of $x$, smaller than its value at $x=\omega_0-\omega_1$,
		$$
		\frac{1-(x-\frac{\eta}{s^2})^2}{1-x^2} \leq \frac{1-(1-\omega_1)^2}{1-(\omega_0-\omega_1)^2} 
		$$
		Using Lemma \ref{est} we obtain $|Q_s(x)| \leq 1$. This yields $\IE(|R(p,q,\xi)|^2) \leq T_s(x)^2 + U_{s-1}(x)^2(1-x^2) =1$.
		
		For the second case $p\in[-1,0]$ which corresponds to $x\in [\omega_0-\omega_1,\omega_0]$, we deduce from $T_s(x)^2 + U_{s-1}(x)^2(1-x^2) =1$ that
		\begin{eqnarray*}
			\IE(|R(p,q,\xi)|^2)
			&\leq&  \frac1{T_s(w_0)^2} + \frac{U_{s-1}(x)^2}{U_{s-1}(w_0)^2} \left( (1+\frac {w_1}{2}p)^2 (-2p)-\frac{(1-x^2)U_{s-1}(\omega_0)^2}{T_s(w_0)^2}\right)
		\end{eqnarray*}
		Using Lemma \ref{limits}, we get 
		\begin{eqnarray*}
			\IE(|R(p,q,\xi)|^2)
			&\leq&  \frac1{T_s(w_0)^2} + \frac{U_{s-1}(x)^2}{U_{s-1}(w_0)^2} \left( (1+\frac {w_1}{2}p)^2 (-2p)-\frac{(1-x^2)U_{s-1}(\omega_0)^2}{T_s(w_0)^2}\right)\\
			&\to&l(\eta,p):=\frac1{\cosh^2{\sqrt{2\eta}}}+\frac{\alpha(\eta+p/\Omega(\eta))^2}{\alpha(\eta)^2}(-2p(\Omega(\eta)-1)+2\Omega(\eta)^2\eta),
		\end{eqnarray*}
		for $s\to\infty$, where the above convergence is uniform for $p\in[0,1]$,$\eta\leq \eta_0$.
		Using the fact that $\Omega(\eta)=1-\frac23\eta+O(\eta^2)$, we deduce 
		 $$\frac{\partial l}{\partial \eta}|_{\eta=0}=-2+\alpha(p)^2(-\frac{4}{3}p+2).$$
		By Taylor series in the neighbourhood of zero we have 
		$\alpha(p)^2=1+\frac{2}{3}p+\frac{8}{45}p^2+O(p^3)$,
		and for $p\in[-1,0],$ $\alpha(p)^2\leq 1+\frac{2}{3}p+\frac{8}{45}p^2$, thus
		for all $p\in[-1,0]$,
		\begin{equation*}
			\frac{\partial l}{\partial \eta}|_{\eta=0}\leq-2+(1+\frac{2}{3}p+\frac{8}{45}p^2)(-\frac{4}{3}p+2)
			=-\frac{8}{135}p^2(4p+9)\leq0.
		\end{equation*}
		Therefore, there exists $\eta_0$ small enough such that for all $p\in[-1,0],\eta\leq\eta_0$, $l(\eta,p)\leq l(0,p)=1$.
		This concludes the proof of Theorem \ref{thm:stabsize}.
	\end{proof}

	\section{Convergence analysis}
	
	\label{sec:convergence}
	
	We show in this section that the proposed scheme \eqref{eq:newmethod} has strong order 1/2 and weak order $1$
	for general systems of SDEs of the form \eqref{eq:sde0} with Lipschitz and smooth vector fields, analogously to the simplest Euler-Maruyama method.
	
	We denote by $C_P^4(\IR^d,\IR^d)$ the set of functions from $\IR^d$ to $\IR^d$ that are $4$ times continuously differentiable
	with all derivatives with at most polynomial growth.
The following theorem shows that the proposed SK-ROCK has strong order 1/2 and weak order $1$ for general SDEs.
	\begin{theorem} \label{thm:conv}
		Consider the system of SDEs \eqref{eq:sde0} on a time interval of length $T>0$, with $f,g \in C_P^4(\IR^d,\IR^d)$, Lipschitz continuous.
		Then the scheme \eqref{eq:newmethod} has strong order $1/2$ and weak order $1$,
		\begin{align}
		\IE|(\| X(t_n) - X_n \|)  &\leq Ch^{1/2},\qquad t_n=nh\leq T, \label{eq:strongconv}\\
		| \IE(\phi(X(t_n))) - \IE(\phi(X_n)) |  &\leq Ch,\qquad t_n=nh\leq T, \label{eq:weakconv}
		\end{align}
		for all $\phi\in C_P^4(\IR^d,\IR)$, where $C$ is independent of $n,h$.
	\end{theorem}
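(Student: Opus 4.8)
The plan is to reduce the global estimates \eqref{eq:strongconv}--\eqref{eq:weakconv} to one-step error bounds by means of the classical fundamental theorems on mean-square and on weak convergence of one-step integrators (see e.g.\ \cite{KlP92}). For mean-square order $1/2$ it is enough to prove that, over one step started from a deterministic point $x$, the local error satisfies $\|\IE[X_x(h)-X_1]\|\le C h^2$ and $(\IE\|X_x(h)-X_1\|^2)^{1/2}\le C h$, with $C$ of at most polynomial growth in $\|x\|$. For weak order $1$ it is enough to match the first three moments of the one-step increment $X_1-x$ to those of the exact increment up to $\bigo(h^2)$, together with $\IE\|X_1-x\|^4=\bigo(h^2)$. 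Both theorems also require uniform-in-$n$ bounds $\sup_{0\le nh\le T}\IE\|X_n\|^{2k}\le C_k$, which I would obtain from the one-step expansion below by the now-standard argument (as for Euler--Maruyama), using that $X_1-X_0=hf(X_0)+Q+\bigo(h^{3/2})$ with $Q=\sum_{r=1}^m g^r(X_0)\Delta W_r$ conditionally centred. Throughout, $s$ and $\eta$ are fixed, so all constants may depend on them.

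The heart of the matter is a careful one-step Taylor expansion. Starting from $x$, I would expand the internal stages $K_i$ in powers of $h^{1/2}$, recalling that $Q=\bigo(h^{1/2})$ in every $L^{2k}$. Two facts drive the computation. First, the deterministic Chebyshev recurrence \eqref{eq:Cheb1} has classical order one, so for $g\equiv 0$ one has $X_1=x+hf(x)+\bigo(h^2)$. Second, the Chebyshev identities already used in the proof of Lemma~\ref{lemma:stabfunction} ($T_s'=sU_{s-1}$, $\kappa_i=1-\nu_i$, and the recurrences \eqref{eq:recTU1}--\eqref{eq:recTU2}) show that the coefficient of the noise term $Q$ in $X_1=K_s$ equals $1$ at $h=0$. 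Propagating the nonlinear remainders through the recursion, one arrives at
\begin{equation*}
X_1 = x + hf(x) + Q + \rho(x,h,\xi),
\end{equation*}
where $\rho$ is a finite sum of: deterministic terms of size $\bigo(h^2)$; terms of the form $h\,M(x)\,Q$ with $M$ a bounded matrix built from $f'(x)$ and the method coefficients (hence conditionally mean zero, with $L^{2k}$-size $\bigo(h^{3/2})$); and terms of size $\bigo(h\|Q\|^2)$ and smaller (hence of conditional mean $\bigo(h^2)$ and $L^2$-size $\bigo(h^2)$). In particular $\IE[\rho\mid x]=\bigo(h^2)$ and $(\IE\|\rho\|^2)^{1/2}=\bigo(h^{3/2})$, with constants of polynomial growth in $\|x\|$.

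It then remains to compare with the It\^o--Taylor expansion of the exact flow, $X_x(h)=x+hf(x)+Q+\sum_{r,l}\nabla g^r(x)\,g^l(x)\int_0^h\!\!\int_0^{u}dW_l(v)\,dW_r(u)+\widetilde\rho$, with $\IE[\widetilde\rho\mid x]=\bigo(h^2)$ and $(\IE\|\widetilde\rho\|^2)^{1/2}=\bigo(h^{3/2})$. Since the double-integral (Milstein) term has conditional mean zero, $L^2$-size $\bigo(h)$, and satisfies $\IE[\Delta W_m\!\int_0^h\!\int_0^u dW_l\,dW_r]=0$, subtracting the two expansions yields the two required local error estimates, hence mean-square order $1/2$. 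For weak order $1$, I would use the expansion together with the Gaussian moments of $Q$ — $\IE[Q\mid x]=0$, $\IE[Q\otimes Q\mid x]=h\sum_r g^r(x)\otimes g^r(x)$, $\IE[Q^{\otimes 3}\mid x]=0$, $\IE\|Q\|^4=\bigo(h^2)$ — and the fact that the $\bigo(h^{3/2})$ part of $\rho$ is linear in $Q$ (odd in $\xi$), to check that $\IE[X_1-x]=hf(x)+\bigo(h^2)$, $\IE[(X_1-x)^{\otimes 2}]=h\sum_r g^r(x)\otimes g^r(x)+\bigo(h^2)$, $\IE[(X_1-x)^{\otimes 3}]=\bigo(h^2)$ and $\IE\|X_1-x\|^4=\bigo(h^2)$; the same quantities for the exact increment coincide with these up to $\bigo(h^2)$ (again using $\IE[\Delta W_m\!\int_0^h\!\int_0^u dW_l\,dW_r]=0$), so the fundamental theorem on weak convergence gives weak order $1$.

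The step I expect to be the main obstacle is the bookkeeping in this one-step expansion: for genuinely nonlinear $f$ one must verify that inserting the noise increment at the first internal stage through $K_1=X_0+\mu_1 h f(X_0+\nu_1 Q)+\kappa_1 Q$ and propagating it through the $s$-stage recurrence yields a coefficient of $Q$ in $X_1$ equal to $1+\bigo(h)$ — with the $\bigo(h)$ part built only from $f'(x)$ — and remainders of exactly the sizes claimed, so that the noise perturbs neither the drift at order $h$ nor the first two increment moments at order $h^2$. This is the place where the Chebyshev polynomial identities behind Lemma~\ref{lemma:stabfunction} must be combined with the classical consistency analysis of the deterministic Chebyshev method; everything else is routine.
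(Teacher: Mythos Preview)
Your proposal is correct and follows essentially the same route as the paper: reduce to local errors via Milstein's fundamental theorems, establish the one-step Taylor expansion $X_1=x+hf(x)+Q+h\,C_s f'(x)Q+h^2R_h(x)$ with bounded-moment remainder, and verify moment bounds via the standard lemma in \cite{MiT04}. The paper isolates the expansion as a separate lemma (Lemma~\ref{lemma:localerror}) proved by induction on the stage index $i$, and the ``bookkeeping'' you flag as the main obstacle is precisely that induction together with the identity $\omega_1=T_s(\omega_0)/T_s'(\omega_0)$, which makes both the $hf$-coefficient and the $Q$-coefficient in $K_s$ equal to~$1$.
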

For the proof the Theorem \ref {thm:conv}, the following lemma will be useful.
		It relies on the linear stability analysis of Lemma \ref{lemma:stabfunction}.
	\begin{lemma} \label{lemma:localerror}
		The scheme \eqref{eq:newmethod} has the following Taylor expansion after one timestep,
		$$
		X_1 = X_0 + hf(X_0) + \sum_{r=1}^mg^r(X_0)\Delta W_r + h \left(\frac{T_s''(\omega_0)\omega_1^2}{T_s(\omega_0)} + \frac{\omega_1}2\right) f'(X_0) \sum_{r=1}^mg^r(X_0)\Delta W_r + h^2 R_h(X_0),
		$$
		where all the moments of $R_h(X_0)$ are bounded uniformly with respect to $h$ assumed small enough, with a polynomial growth with respect to $X_0$.
	\end{lemma}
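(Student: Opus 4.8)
The plan is to freeze the Jacobian of the drift at $X_0$, which turns the nonlinear recursion \eqref{eq:newmethod} into an affine one that can be solved in closed form by exactly the Chebyshev polynomial manipulations already used in Lemma~\ref{lemma:stabfunction}, and then to Taylor-expand that closed form in $h$. In the mean-square sense $Q=\sum_r g^r(X_0)\Delta W_r=\bigo(h^{1/2})$, so the terms that must be exhibited are those of order at most $\bigo(h^{3/2})$, namely $X_0$, $Q$, $hf(X_0)$ and $hf'(X_0)Q$, everything else being absorbed into $h^2R_h$; observe that in \eqref{eq:newmethod} the maps $g^r$ are evaluated only at $X_0$, so no $\bigo(h)$ noise term is created, which is what gives the expansion its stated shape.

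First I would introduce $\bar f(x):=f(X_0)+f'(X_0)(x-X_0)$, run the scheme \eqref{eq:newmethod} with $f$ replaced by $\bar f$, and denote the resulting stages by $\bar K_i$ and output by $\bar X_1=\bar K_s$. Because $\nu_i+\kappa_i=1$ (from \eqref{coeff} and the recurrence \eqref{eq:recTU1}) one has $K_i-X_0=h\mu_if(K_{i-1})+\nu_i(K_{i-1}-X_0)+\kappa_i(K_{i-2}-X_0)$ for $i\ge2$; an induction over $i\le s$ --- with $s,\eta$ fixed, so that $\mu_i,\nu_i,\kappa_i$ and the numbers $T_i(\omega_0),\omega_1$ are harmless constants --- together with $\|Q\|_{L^p}=\bigo(h^{1/2})$ and the polynomial growth of $f$ gives the a priori bound $\|K_i-X_0\|_{L^p}+\|\bar K_i-X_0\|_{L^p}=\bigo(h^{1/2})$ for all $p$. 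Hence $f(K_{i-1})-\bar f(K_{i-1})=\bigo(\|K_{i-1}-X_0\|^2)=\bigo(h)$ (second-order Taylor remainder of $f$, with polynomially growing constant), so $E_i:=K_i-\bar K_i$ obeys $E_0=0$, $E_1=\bigo(h^2)$ and $E_i=\bigo(h^2)+(\nu_i+h\mu_if'(X_0))E_{i-1}+\kappa_iE_{i-2}$; a discrete Gronwall argument over the fixed number $s$ of stages then yields $X_1=\bar X_1+h^2\tilde R_h(X_0)$, with all moments of $\tilde R_h$ bounded uniformly in $h$ and polynomially growing in $X_0$.

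Next I would solve the affine recursion. Setting $Z_i:=\bar K_i-X_0$ and decomposing $Z_i=Z_i^{0}+Z_i^{Q}$, where $Z_i^{0}$ is obtained by putting $Q=0$ and $Z_i^{Q}$ by putting $f(X_0)=0$, the sequence $X_0+Z_i^{0}$ is precisely the deterministic Chebyshev method \eqref{eq:Cheb1} applied to the affine ODE $\dot Y=\bar f(Y)$, $Y(0)=X_0$, so that $Z_s^{0}=h\,\Phi_s(hf'(X_0))f(X_0)$ with $\Phi_s(z):=(A(z)-1)/z$ a polynomial satisfying $\Phi_s(0)=A'(0)=1$ (first-order consistency); hence $Z_s^{0}=hf(X_0)+h^2(\cdots)$, the $h^2$ term being deterministic and polynomially growing in $X_0$. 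For the noise part, $Z_0^{Q}=0$, $Z_1^{Q}=(\kappa_1+h\mu_1\nu_1f'(X_0))Q$ and $Z_i^{Q}=(\nu_i+h\mu_if'(X_0))Z_{i-1}^{Q}+\kappa_iZ_{i-2}^{Q}$, which is exactly the three-term recursion obeyed, in the scalar linear computation of Lemma~\ref{lemma:stabfunction}, by the coefficient of $Q$ in $K_i$ --- the polynomial whose value at the final stage is $B$ in \eqref{eq:RAB} --- with the scalar argument $p$ replaced by the matrix $hf'(X_0)$. Therefore $Z_s^{Q}=B(hf'(X_0))Q$.

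Finally, since $B(0)=1$ one gets $B(hf'(X_0))Q=Q+B'(0)hf'(X_0)Q+\bigo(h^{5/2})$, and differentiating $B(p)=\frac{U_{s-1}(\omega_0+\omega_1p)}{U_{s-1}(\omega_0)}(1+\frac{\omega_1}{2}p)$ at $p=0$ while using $T_s'=sU_{s-1}$ and $\omega_1=T_s(\omega_0)/T_s'(\omega_0)$ identifies $B'(0)=\frac{T_s''(\omega_0)\omega_1^2}{T_s(\omega_0)}+\frac{\omega_1}{2}$, exactly the coefficient in the statement. Collecting the three contributions and recalling $Q=\sum_{r=1}^m g^r(X_0)\Delta W_r$ then produces the asserted expansion, $h^2R_h(X_0)$ gathering $h^2\tilde R_h$, the $h^2$ part of $Z_s^{0}$ and the $\bigo(h^{5/2})$ remainder, all with bounded moments and polynomial growth in $X_0$. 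I expect the only genuinely delicate point to be the first step --- the a priori $\bigo(h^{1/2})$ bound on the internal stages and the $h$-uniform control of the remainders; granted that, everything reduces, via $\nu_i+\kappa_i=1$ and $T_s'=sU_{s-1}$, to the closed-form computation already carried out in Lemma~\ref{lemma:stabfunction}.
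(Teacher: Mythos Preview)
Your argument is correct, but it follows a different route from the paper. The paper proceeds by a direct induction on the internal stages: it writes down a Taylor expansion of each $K_i$ in the form \eqref{eq:K_i} (with explicit Chebyshev-polynomial coefficients $\frac{T_i'(\omega_0)\omega_1}{T_i(\omega_0)}$, $\frac{sT_i'(\omega_0)\omega_1}{iT_i(\omega_0)}$, etc.), verifies that these coefficients propagate through the three-term recurrence \eqref{eq:recTU1}--\eqref{eq:recTU2}, and then specializes to $i=s$ using $\omega_1=T_s(\omega_0)/T_s'(\omega_0)$. Your approach instead linearizes the drift at $X_0$, controls the nonlinear remainder $K_i-\bar K_i$ by a discrete Gronwall estimate, and then solves the resulting affine recursion in closed form by recognising it as the matrix version of the stability computation already carried out in Lemma~\ref{lemma:stabfunction}; the coefficient in the statement then drops out as $B'(0)$, which you identify via $T_s'=sU_{s-1}$ and $\omega_1 T_s'(\omega_0)=T_s(\omega_0)$. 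This is more structural --- it makes transparent that the $hf'(X_0)Q$ coefficient is nothing but the derivative at the origin of the diffusion stability function $B$ --- and it avoids re-deriving the recurrence from scratch. The paper's direct induction, on the other hand, is shorter and has the side benefit of producing the intermediate-stage expansions \eqref{eq:K_i}, which are reused later in the proof of Lemma~\ref{lemma:postprocessor} for the postprocessor analysis; with your approach those would have to be recovered separately.
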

	\begin{proof}
		Using the definition \eqref{eq:newmethod} of the scheme and the recurrence relations \eqref{eq:recTU1},\eqref{eq:recTU2}, we obtain by induction on $i=1,\ldots,s$,
		\begin{align} \label{eq:K_i}
		K_i &= X_0 + h \frac{T_i'(\omega_0)\omega_1}{T_i(\omega_0)} f(X_0) + \frac{sT_i'(\omega_0)\omega_1}{iT_i(\omega_0)}\sum_{r=1}^mg^r(X_0)\Delta W_r \nonumber \\
		&+ h \left(\frac{sT_i''(\omega_0)\omega_1^2}{iT_i(\omega_0)} + \frac{sT_i'(\omega_0)\omega_1^2}{2iT_i(\omega_0)}\right) f'(X_0) \sum_{r=1}^mg^r(X_0)\Delta W_r + h^2 R_{i,h}(X_0),
		\end{align}
		and $R_{i,h}(X_0)$ has the properties claimed on $R_{h}(X_0)$. Using $\omega_1 = T_s(\omega_0)/T_s'(\omega_0)$, this yields the result for $X_1=K_s$.
	\end{proof}

	\begin{proof}[Proof of Theorem \ref{thm:conv}]
		A well-known theorem of Milstein \cite{Mil86} (see \cite[Chap.\ts 2.2]{MiT04})
		allows to infer the global orders of convergence from the error after one step.
		We first show that for all $r\in \mathbb{N}$ the moments $\IE(|X_n|^{2r})$ are bounded for all $n,h$ with $0\leq nh \leq T$ uniformly
		with respect to all $h$ sufficiently small.
		Then, it is sufficient to show the local error estimate
		$$
		|\IE(\phi(X(t_1))) -\IE(\phi(X_1))| \leq C h^{2},
		$$
		for all initial value $X(0)=X_0$ and where $C$ has at most polynomial growth with respect to $X_0$, to deduce the weak convergence estimate \eqref{eq:weakconv}.
		For the strong convergence \eqref{eq:weakconv}, using the classical result from \cite{Mil87}, it is sufficient to show in addition the local error estimate
		$$
		\IE(\|X(t_1) - X_1\|) \leq C h
		$$
		for all initial value $X(0)=X_0$ and where $C$ has at most polynomial growth with respect to $X_0$. 
		These later two local estimates are an immediate consequence of Lemma \ref{lemma:localerror}.
		
		To conclude the proof of the global error estimates, it remains to check that for all $r\in\mathbb{N}$ the moments $\IE(|X_n|^{2r})$ are bounded  uniformly with respect to all $h$ small enough for all $0\leq nh \leq T$. 
		We use here the approach of \cite[Lemma 2.2, p.\ts 102]{MiT04} which states that
		it is sufficient to show
		\begin{equation} \label{eq:lemma_moments}
		|\IE(X_{n+1}-X_n| X_n)| \leq C (1+| X_n|) h, \qquad | X_{n+1}- X_n| \leq M_n (1+| X_n|) \sqrt h,
		\end{equation}
		where $C$ is independent of $h$ and $M_n$ is a random variable with moments of all orders bounded uniformly
		with respect to all $h$ small enough.
		These estimates are a straightforward consequence of the definition 
		\eqref{eq:newmethod} of the scheme and the linear growth of $f,g$ (a consequence of their  Lipschitzness). 
		This concludes the proof of Theorem \ref{thm:conv}.
	\end{proof}
	\begin{remark}
		In the case of additive noise, i.e. $g^r,r=1,\ldots,m$ are constant functions, one can show that the 
		order of strong convergence \eqref{eq:strongconv} become $1$, analogously to the case of the Euler-Maruyama method.
		For a general multiplicative noise, a scheme of strong order one can also be constructed with $\IE(|R(p,q,\xi)|^2) \leq 1$
		for all $p\in[-2\omega_1^{-1},0]$ and all $q$ with $p+\frac{|q|^2}2 \leq 0$, as it can be check numerically.
		The idea is to modify the first stages of the scheme such that the stability function \eqref{eq:defR}   becomes
		$$
		R(p,q,\xi) = \frac{T_s(\omega_0+\omega_1 p)}{T_s(\omega_0)} 
		+ \frac{U_{s-1}(\omega_0+\omega_1 p)^2}{U_{s-1}(\omega_0)^2}(1+ \frac{w_1}2 p-\frac{\omega_1^4}2 p^2)\left(q \xi+q^2\frac{\xi^2-1}{2}\right).
		$$
		We refer to \cite[Remark 3.2]{AVZ13} for details.
	\end{remark}

	\section{Long term accuracy for Brownian dynamics}
	\label{sec:langevin}
	In this section we discuss the long-time accuracy of the SK-ROCK for Brownian dynamics (also called overdamped Langevin dynamics). We will
	see that using postprocessing techniques we can derive an SK-ROCK method that captures the invariant measure of Brownian dynamics with second order accuracy. In doing so, we do not need our stabilized method to be of weak order 2 on bounded time intervals and we obtain a method that is cheaper than the stochastic orthogonal Runge-Kutta-Chebyshev method of weak order 2 (S-ROCK2) proposed in \cite{AVZ13}, as S-ROCK2 uses many more function evaluations per time-step and a smaller stability domain.
	\subsection{An exact SK-ROCK method for the Orstein-Uhlenbeck process}
	\label{sub:exactOU}
	We consider the $1$-dimensional Orstein-Uhlenbeck problem with $1$-dimensional noise with constants $\delta,\sigma>0$,
\begin{equation} \label{eq:OU1d}
	dX(t) = -\delta X(t)dt  + \sigma dW(t),
	\end{equation}
	that is ergodic and has a Gaussian invariant measure with mean zero and variance given by $\lim_{t\rightarrow\infty}\IE(X(t)^2)=\sigma^2/(2\delta)$.
	Applying the SK-ROCK method to the above system we obtain 
	\begin{equation} \label{eq:SKOU}
	X_{n+1}=A(p)X_n+B(p)\sigma\sqrt{h}\xi_n
	\end{equation}
	where $p=-\delta h$, $\xi_n \sim \calN(0,1)$ is a Gaussian variable and similarly as for  \eqref{eq:defR} we have
	\begin{equation} \label{eq:defROU}
	A(p)=\frac{T_s(\omega_0+\omega_1 p)}{T_s(\omega_0)},\quad B(p)= \frac{U_{s-1}(\omega_0+\omega_1 p)}{U_{s-1}(\omega_0)}(1+ \frac{\omega_1}2 p).
		\end{equation}
		A simple calculation (using that $|A(p)|<1$) gives
	\begin{equation*} %\label{eq:SKOUI}	
	\lim_{n\rightarrow\infty}\IE(X_n^2)=\frac{\sigma^2}{2\delta} R(p),\qquad
	R(p)= \frac{2p B(p)^2}{A(p)^2-1}.
	\end{equation*}
	From the above equation, we see that the SK-ROCK method has order $r$ for the invariant measure of \eqref{eq:OU1d} if and only if $R(p)=1+{\cal O}(p^r)$ and a short calculation using \eqref{eq:defROU} reveals that
	$R(p)=1+{\cal O}(p)$, it has order one for the invariant measure (this is of course not surprising because the 
	SK-ROCK  has weak order one).
	We next apply the techniques of postprocessed integrators popular in the deterministic literature \cite{butcher69teo} and proposed in the stochastic context in \cite{Vil15}.
	The idea is to consider a postprocessed dynamics $\overline X_n=G_n(X_n)$ (of negligible cost) such that the process $\overline X_n$ approximates the invariant measure of the dynamical system with higher order. For the process \eqref{eq:OU1d}, we consider the postprocessor
	\begin{equation}
	\label{equ:post_OU}
	\overline X_n =X_n+c\sigma\sqrt{h}\xi_n,
	\end{equation}
	which yields $\lim_{n\rightarrow\infty}\IE(\overline X_n^2)=\frac{\sigma^2}{2\delta} (R(p)-2c^2p)$.
In the case of the SK-ROCK method with $\eta=0$ (zero damping), we have $A(p)=T_s(1+p/s^2),
B(p)=U_{s-1}(1+p/s^2)(1+p/(2s^2))/s$. Setting $c=1/(2s)$ and using the identity
$
(1-x^2)U_{s-1}^2(x)=1-T_{s}^2(x)
$
with $x=1+p/s^2$ reveals that $R(p)-2c^2p=1$ and we obtain
\begin{equation}\label{eq:exact}
\lim_{n\rightarrow\infty}\IE(\overline X_n^2)=\frac{\sigma^2}{2\delta}.
%\modg{\tilde R(p)=1}
\end{equation}
Hence the postprocessed SK-ROCK method (that will be denoted PSK-ROCK) captures exactly the invariant measure of the $1$-dimensional Orstein-Uhlenbeck problem \eqref{eq:OU1d}. Such a behavior is known for the drift-implicit $\theta$ method with $\theta=1/2$ (see \cite{ChW12} in the context of the stochastic heat equation) and has recently also been shown for the
non-Markovian Euler scheme \cite{LM13}. In \cite{Vil15} an interpretation of the scheme \cite{LM13} as an Euler-Maruyama method with postprocessing \eqref{equ:post_OU} with $c=1/2$
has been proposed and we observe that this is exactly the same postprocessor as for the PSK-ROCK method (with $s=1,\eta=0$). As the SK-ROCK method with zero damping is mean-square stable (see Remark \ref{rem:exactetazero} for $\eta=0$),
it can be seen as a stabilized version of the scheme \cite{LM13}. However, the PSK-ROCK method with $s>1$ and zero damping is not robust to use as its stability domain along the drift axis does not allow for any imaginary perturbation at the points where  $|T_s(1+p/s^2)|=1$ and it is not ergodic (see Remark \ref{rem:etazero} below). 

\paragraph{Stability analysis for Orstein-Uhlenbeck}
	
	Let $M \in\IR^{d\times d}$ denote a symmetric matrix with eigenvalues $-\lambda_d \leq \ldots \leq -\lambda_1 < 0 $, and consider the $d$-dimensional Orstein-Uhlenbeck problem
	\begin{equation} \label{eq:OUmultid}
	dX(t) = MX(t)dt + \sigma dW(t)
	\end{equation}
	where $W(t)$ denotes a $d$-dimensional standard Wiener process.
	The following theorem shows that the damping parameter $\eta>0$ plays an essential role to warranty the convergence to the 
	numerical invariant measure $\rho^h_\infty(x)dx$ at an exponentially fast rate.
	\begin{theorem} \label{thm:stabOU}
		Let $\eta>0$.
		Consider the scheme \eqref{eq:newmethod} with postprocessor \eqref{equ:post_OU} applied to \eqref{eq:OUmultid}
		with stepsize $h$ and stage parameter $s$ such that $2\omega_1^{-1}\geq h\lambda_d$.
		Then, for all $h\leq \eta/\lambda_1$,$\phi\in C_P^1(\IR^d,\IR)$,
		$$|\IE(\phi(\overline X_n)) - \int_{\IR^d}\phi(x)\rho^h_\infty(x)dx | \leq C\exp(-\lambda_1(1+\eta)^{-1} t_n)$$
		where
		$C$ is independent of $h,n,s,\lambda_1,\ldots,\lambda_d$.
	\end{theorem}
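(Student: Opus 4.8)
The plan is to diagonalize $M$ and reduce everything to one dimension. Since $M=M^\top$, write $M=P^\top\diag(-\lambda_1,\dots,-\lambda_d)P$ with $P$ orthogonal and set $Y_n=PX_n$. Rotational invariance of Brownian motion makes $PW$ again a standard $d$-dimensional Wiener process, and because \eqref{eq:newmethod} is a linear recursion for the linear drift $x\mapsto Mx$, this change of variables decouples both the scheme and its postprocessor \eqref{equ:post_OU} into $d$ independent scalar SK-ROCK recursions applied to \eqref{eq:OU1d} with $\delta=\lambda_j$ and $p_j=-\lambda_j h\in[-2\omega_1^{-1},0)$. Each component is then an affine Gaussian autoregression, so $\overline Y_n^{(j)}$ is Gaussian with mean $A(p_j)^nY_0^{(j)}$ and variance $\tilde v_n^{(j)}=v_n^{(j)}+c^2\sigma^2h$, where $v_n^{(j)}=\sigma^2h\,B(p_j)^2(1-A(p_j)^{2n})/(1-A(p_j)^2)$, while $\rho_\infty^h$ is the $P^\top$-image of the product of the stationary Gaussians $\mathcal N(0,\tilde v_\infty^{(j)})$ with $\tilde v_\infty^{(j)}=\sigma^2 R(p_j)/(2\lambda_j)+c^2\sigma^2h$ ($R$ as in Section~\ref{sub:exactOU}); here $|A(p_j)|<1$ on $(-2\omega_1^{-1},0)$ precisely because $\eta>0$. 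Hence $\IE(\overline Y_n^{(j)})$ and $\tilde v_n^{(j)}$ converge to their stationary values at the respective rates $|A(p_j)|^n$ and $|A(p_j)|^{2n}$, and the theorem comes down to a uniform-in-$s$ exponential bound on the damped stability polynomial.

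The hard part will be the following estimate: under the hypotheses $h\lambda_d\le2\omega_1^{-1}$ and $h\lambda_1\le\eta$, one has $|A(-\lambda h)|\le e^{-\lambda_1 h/(1+\eta)}$ for every $\lambda\in\{\lambda_1,\dots,\lambda_d\}$. To prove it I would split $[-2\omega_1^{-1},0]$ at $p^\star=(1-\omega_0)/\omega_1=-\eta/(s^2\omega_1)$, which lies in $[-\eta,0)$ because $\omega_1 s^2\ge1$ (equivalently, writing $\omega_0=\cosh u$, because $s\sinh u\ge\tanh(su)$ for $u\ge0$). On $[p^\star,0]$, i.e.\ $x:=\omega_0+\omega_1p\in[1,\omega_0]$, the value $A(p)=T_s(x)/T_s(\omega_0)$ is positive and $(\log A)'(p)=\omega_1 T_s'(x)/T_s(x)\ge\omega_1 T_s'(\omega_0)/T_s(\omega_0)=1$, since $x\mapsto T_s'(x)/T_s(x)=s\tanh(su)/\sinh u$ is nonincreasing on $[1,\infty)$; integrating from $p$ to $0$ and using $A(0)=1$ gives $A(p)\le e^p\le e^{p/(1+\eta)}$. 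On $[-2\omega_1^{-1},p^\star]$, i.e.\ $x\in[-1+\eta/s^2,1]$, one has $|T_s(x)|\le1$, hence $|A(p)|\le1/T_s(\omega_0)$; moreover $T_s(\omega_0)=T_s(1+\eta/s^2)\ge T_1(1+\eta)=1+\eta$ because $s\mapsto T_s(1+\eta/s^2)$ is nondecreasing (its Taylor coefficients in $\eta$ are), so $|A(p)|\le1/(1+\eta)\le e^{-\eta/(1+\eta)}$ by the elementary inequality $\log(1+\eta)\ge\eta/(1+\eta)$. Combining the two cases, $|A(-\lambda h)|\le\max\big(e^{-\lambda h/(1+\eta)},\,e^{-\eta/(1+\eta)}\big)\le e^{-\lambda_1 h/(1+\eta)}$, where in the first alternative I use $\lambda\ge\lambda_1$ and in the second $\lambda_1 h\le\eta$. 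The substance of the proof is entirely in the two monotonicity facts (for $T_s'/T_s$ and for $s\mapsto T_s(1+\eta/s^2)$) and in landing exactly on the constant $(1+\eta)^{-1}$; both monotonicities reduce to short hyperbolic-function computations, and $\eta>0$ is essential (for $\eta=0$ the interval degenerates, $|A|$ equals $1$ at interior extrema, and there is no contraction, consistent with the non-ergodicity of the undamped scheme).

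It then remains to convert this into a bound on test functions, which is routine Gaussian bookkeeping. Set $\rho=e^{-\lambda_1 h/(1+\eta)}<1$, so $\rho^n=e^{-\lambda_1 t_n/(1+\eta)}$. For each $j$ the mean obeys $|A(p_j)^n Y_0^{(j)}|\le\rho^n|X_0|$, and $|\tilde v_n^{(j)}-\tilde v_\infty^{(j)}|=v_\infty^{(j)}|A(p_j)|^{2n}\le\tfrac{\sigma^2}{2\lambda_1}\rho^{2n}$ (using $R(p_j)\le1$ from the mean-square stability of SK-ROCK, cf.\ Theorem~\ref{thm:stabsize}), hence $|\sqrt{\tilde v_n^{(j)}}-\sqrt{\tilde v_\infty^{(j)}}|\le\rho^{2n}\sqrt{\tilde v_\infty^{(j)}}$. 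Coupling $\overline Y_n^{(j)}\sim\mathcal N(A(p_j)^nY_0^{(j)},\tilde v_n^{(j)})$ to $Z^{(j)}\sim\mathcal N(0,\tilde v_\infty^{(j)})$ through the affine map matching them gives $|\overline Y_n^{(j)}-Z^{(j)}|\le|A(p_j)^nY_0^{(j)}|+|\sqrt{\tilde v_n^{(j)}}-\sqrt{\tilde v_\infty^{(j)}}|\,|\zeta_j|$ with $\zeta_j\sim\mathcal N(0,1)$; summing over $j$, using that $\phi\circ P^\top\in C_P^1(\IR^d,\IR)$ so that $|\phi(P^\top y)-\phi(P^\top y')|\le K(1+|y|^\ell+|y'|^\ell)|y-y'|$, the Cauchy--Schwarz inequality, and moment bounds for $\overline Y_n$ and $Z$ that are uniform in $h,n,s$, I would conclude $|\IE(\phi(\overline X_n))-\int_{\IR^d}\phi\,d\rho_\infty^h|\le C\rho^n=Ce^{-\lambda_1 t_n/(1+\eta)}$, which is the assertion.
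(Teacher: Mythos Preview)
Your proof is correct and tracks the paper's argument closely. Both reduce the claim to the uniform contraction estimate $|A(-\lambda_j h)|\le e^{-\lambda_1 h/(1+\eta)}$, split at the same point $p^\star=(1-\omega_0)/\omega_1$, and on the oscillatory piece $[-2\omega_1^{-1},p^\star]$ use $|T_s|\le1$ together with $T_s(\omega_0)\ge1+\eta$ identically. The two places where you deviate are minor. On $[p^\star,0]$ the paper bounds the convex function $A$ by its secant line (using $T_s''\ge0$ on $[1,\infty)$) to get $A(p)\le1+p/(1+\eta)\le e^{p/(1+\eta)}$, whereas you integrate the log-derivative, using that $T_s'/T_s$ is nonincreasing on $[1,\infty)$, to obtain the slightly sharper $A(p)\le e^p$; both land on the same final inequality. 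For the passage from the $|A|$ bound to test functions, the paper uses a one-line synchronous coupling---run two copies of \eqref{eq:newmethod} with identical noise, one started at stationarity, so that $\overline X_n^1-\overline X_n^2=A(hM)^n(X_0^1-X_0^2)$---which is tidier than your explicit Gaussian mean/variance bookkeeping, though yours is equally valid. One small remark: your appeal to Theorem~\ref{thm:stabsize} for $R(p_j)\le1$ is only asymptotic in $s$ and $\eta$, but you do not actually need it for the square-root estimate you use immediately afterwards.
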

	\begin{proof}
		It is sufficient to show the estimate
		\begin{equation} \label{eq:boundR}
		|A(-\lambda_j h)| \leq \exp(-\lambda_1(1+\eta)^{-1} h)
		\end{equation}
		for all $h\leq h_0$,
		where we denote $A(z)={T_s(\omega_0 +\omega_1 z)}/{T_s(\omega_0)}$.
		Indeed, considering two initial conditions $X_0^1,X_0^2$ for \eqref{eq:newmethod} and the corresponding numerical solutions $X_n^1,X_n^2$  (obtained for the same realizations of $\{\xi_n\}$) with postprocessors $\overline X_n^1,\overline X_n^2$, we obtain $X_n^1 - X_n^2 = A(hM) (X_{n-1}^1 - X_{n-1}^2)$ and using the matrix $2$-norm $\|A(hM)\| =\max_j |A(-\lambda_j h)|$ and \eqref{eq:boundR}, we deduce by induction on $n$,
		$$
		\|\overline X_n^1 - \overline X_n^2\| = \|X_n^1 - X_n^2\| \leq \exp(-\lambda_1(1+\eta)^{-1} t_n) \|X_0^1 - X_0^2\|,
		$$
		and taking $\overline X_0^2$ distributed according to the numerical invariant measure yields the result. 
		
		For the proof of \eqref{eq:boundR}, let $z=-\lambda_j h$. 
		Consider first the case $z\in (-\eta\omega_1^{-1}s^{-2},0)$.
		Using the convexity of $A(z)$ on $[-\eta\omega_1^{-1}s^{-2},0]$ (note that $T_s'(x)$ is increasing on $[1,\infty)$), we can bound $A(z)$ by the affine function
		passing by the points $(x_1,A(x_1)),(x_2,A(x_2))$ with 
		$x_1=-\eta\omega_1^{-1}s^{-2}$, $x_2=0$,
		$$
		A(z) \leq 1 + z(1-1/T_s(\omega_0))\eta^{-1}\omega_1s^{2}
		$$
		%We note that $\omega_1^{-1}s^{-2}$ and $T_s(\omega_0)$ are increasing functions of $s$ bounded from below by there values at $s=1$, which yields
		Using 
		$\omega_1s^{2}\geq 1$ and $T_s(\omega_0)\geq 1+\eta$, we obtain
		$$
(1-1/T_s(\omega_0))\eta^{-1}\omega_1s^{2} \geq 
		(1-(1+\eta)^{-1})\eta^{-1} = (1+\eta)^{-1}.
		$$
		This yields for all $z\in [-\eta\omega_1^{-1}s^{-2},0]$,
		$$
		A(z) \leq 1 + z(1+\eta)^{-1} \leq \exp({z(1+\eta)^{-1}})
		$$
		where we used the convexity of $\exp({z(1+\eta)^{-1}})$ bounded from below by its tangent at $z=0$. We obtain
		$$
		A(-\lambda_j h) \leq e^{-\lambda_j h (1+\eta)^{-1}} \leq e^{-\lambda_1 h (1+\eta)^{-1}}.
		$$
		We now consider the case $z\in%[-Cs^2,-\eta] \subset 
		[-L_s,-\eta \omega_1^{-1}s^{-2}]$. 
		We have $|\omega_0 +\omega_1 z|\leq 1$, thus $|T_s(\omega_0 +\omega_1 z)|\leq 1$ and
		$$|A(z)| \leq 1/T_s(\omega_0) \leq \exp(-\lambda_1(1+\eta)^{-1} h)$$ 
		for all $h\leq (1+\eta)\log (T_s(\omega_0))/\lambda_1$, and thus also for 
		$h\leq \eta/\lambda_1$ where
		we use $T_s(\omega_0) \geq 1+\eta$ and $(1+\eta)\log (1+\eta) \geq \eta$.
		This concludes the proof.
		%For $|z|\leq \eta$ and $s\rightarrow \infty$, using Lemma \ref{limits}, we have the uniform convergence 
		%$$
		%R_s(z) =\frac{T_s(1+ (\eta+\omega_1 s^2 z)/s^2)}{T_s(\omega_0)}
		%\rightarrow \frac{\cosh\sqrt{2z \sqrt{2\eta}/\tanh\sqrt{2\eta} }}{\cosh\sqrt{2\eta}}
		%$$
		%and thus $|R_s(z)|\leq C_0$ is bounded uniformly for all $s\geq 1$, $|z|\leq \eta$.
		%Let $\varphi_1(z) = e^z-R_s(z)$. Noticing that ${\varphi_1(z)}/{z^2}$ has no poles using the order one $R_s(z)=1+z+\bigo(z^2)$, by the maximum principle (COMMENT: methodology from Michel Crouzeix, see also our paper HMM parabolic),
		%$$
		%\sup_{|z| \leq \eta} \left|\frac{\varphi_1(z)}{z^2} \right| =
		%\sup_{|z| = \eta} \left|\frac{\varphi_1(z)}{z^2} \right|
		%=\frac{e^\eta + C_0 }{\eta^2} = C_1.
		%$$
		%Let $\gamma\in(0,\eta)$.
		%Then, for $z\in[-\gamma,0]$, 
		%$|R_s(z)| = e^{z/2} |e^{z/2}-e^{-z}\varphi_1(z)| \leq e^{z/2} (1-|z|/2+|z|^2/8+C_1 |z|^2 e^{-z/2})$. 
		%Choosing $\gamma$ small enough, we have for all $z\in(-\gamma, 0)$ that
		%$|1-|z|/2+|z|^2/8+C_0 |z|^2 e^{-z/2}| \leq 1$ and
		%$
		%|R_s(z)| \leq e^{z/2}.
		%$
		%Taking $h\leq \gamma/\lambda_1$ and using that $R_s(z)$ is increasing on $(-\eta,0)$ then yields $R_s(-\lambda_j h) \leq e^{-\lambda_1/2\, h}$ which concludes the proof.
	\end{proof}
	\begin{remark} \label{rem:etazero}
	  %The estimate of Theorem \ref{thm:stabOU} remains valid for the Markov chain $(X_n,\overline X_{n-1})$ \moda{not clear ...} and the corresponding invariant measure in $\IR^{2d}$, but we state it for $\overline X_{n}$ for simplicity.
		%\modg{REMOVED}
		Note that $\eta>0$ is a crucial assumption in Theorem \ref{thm:stabOU}.
		Indeed, the estimate of Theorem \ref{thm:stabOU} is false for $\eta=0$ already in dimension $d=1$ for all $s>1$: for a stepsize $h$ such that 
		$1-h\lambda_1/s^2=\cos(\pi/s)$ we obtain $A(-\lambda_1h)=-1$ and $B(-\lambda_1h)=0$ in \eqref{eq:defROU}  (corresponding to the local extrema $p=-\lambda_1 h$ of $A(p)$ closest to zero)
		and $X_{n}=(-1)^n X_{0}$ for all $n$, and the scheme is not ergodic.
		In addition, notice that Theorem \ref{thm:stabOU} allows to use an $h$-dependent value 
		of $\eta$ such as $\eta= h\widetilde \lambda_1$
		where $\widetilde \lambda_1\geq \lambda_1$ is an upper bound for $\lambda_1$.
		In this case, the exponential convergence of Theorem \ref{thm:stabOU} holds for all stepsize
		$h\leq 1$.
	\end{remark}
	%\moda{Shall we comment why Thm 5.1 does not hold for zero damping, related to breaking the proof in two intervals ...}
	
We end this section by noting that being exact for the invariant measure of Brownian dynamics \eqref{eq:langevin} is only true for the PSK-ROCK method (or the method in \cite{LM13}) 
in the linear case, i.e. for a quadratic potential $V$.
%for linear processes with a symmetric drift . 
Second order accuracy for the invariant measure 
%has been shown for the method \cite{LM13}) in \cite{LMT14} for general
has been shown in \cite{LMT14} (see also \cite{Vil15}) for the method in \eqref{eq:LM} (equivalent to PSK-ROCK with $s=1$ stage) for general
nonlinear Brownian dynamics \eqref{eq:langevin}. This will also be shown for the nonlinear PSK-ROCK method in the next section.

	\subsection{PSK-ROCK: a second order postprocessed SK-ROCK method for nonlinear Brownian dynamics}
	We consider the overdamped Langevin equation,
	\begin{equation} \label{eq:langevin}
	dX(t) = -\nabla V(X(t))dt + \sigma dW(t),
	\end{equation}
	where the stochastic process $X(t)$ takes values in $\IR^d$ and $W(t)$ is a $d$-dimensional Wiener process.
	We assume that the potential $V:\IR^d \to \IR$ has class $C^\infty$ and satisfies the at least quadratic growth assumption
	\begin{equation}\label{eq:Vgrowth}
	x^T\nabla V(x) \geq C_1x^T x -C_2
	\end{equation}
	for two constants $C_1,C_2>0$ independent of $x\in\IR^d$.
	The above assumptions warranty that the system \eqref{eq:langevin} is ergodic with exponential convergence
	to a unique invariant measure with Gibbs density $\rho_\infty = Z \exp(-2\sigma^{-2}V(x))$,
	$$
	|\IE(\phi(X(t)) - \int_{\IR^d} \phi(x) \rho_\infty(x) dx| \leq C e^{-\lambda t},
	$$
	for test function $\phi$ and all initial condition $X_0$, where $C,\lambda$ are independent of $t$.
	
	We propose to modify the internal stage $K_1=X_0+\mu_1hf(X_0+\nu_1Q)+\kappa_1Q$ of the method \eqref{eq:newmethod} as follows:
	\begin{equation} \label{eq:defR1new}
	K_1=X_0+\mu_1hf(X_0+\nu_1Q)+\kappa_1Q+\alpha h\big(f(X_0+\nu_1Q)-2 f(X_0)+f(X_0-\nu_1Q)\big),
	\end{equation}
	%\frac{2}{s^2\omega_1^2}\alpha
%	where $\alpha$ is a parameter whose optimal value depends on $s$ and $\eta$ is discussed below.
	where $\alpha$ is a parameter depending on $s$ and $\eta$ given in 
Theorem \ref{thm:postprocessor} below.

	\begin{remark}
		Notice that for $\alpha=0$, we recover the original definition from \eqref{eq:newmethod}. 
		We note that the parameter $\alpha$ does not modify 
		the stability function of Lemma \ref{lemma:stabfunction}, and yields a perturbation of order $\bigo(h^2)$ in the definition of $X_1$.
		Thus, the results of Theorem \ref{thm:stabsize} and Theorem \ref{thm:conv}
		remain valid for any value of $\alpha$ for the scheme \eqref{eq:newmethod}
		with modified internal stage \eqref{eq:defR1new}.
	\end{remark}

	\begin{theorem} \label{thm:postprocessor}
	Consider the Brownian dynamics \eqref{eq:langevin}, where we assume that $V:\IR^d \to \IR$ has class $C^\infty$, with $\nabla V$ globally Lipschitz and satisfying \eqref{eq:Vgrowth}.
		Consider the scheme \eqref{eq:newmethod} applied to \eqref{eq:langevin} with modified internal stage $K_1$ defined in \eqref{eq:defR1new} with $\alpha$ defined in \eqref{eq:alphac},
	and the postprocessor defined as
		$$
		\overline X_n = X_n + c\sigma \sqrt h \xi,
		$$
		where 
				\begin{equation} \label{eq:alphac}
				c^2 = -\frac14 + \frac{\omega_1}2 +\frac{\omega_1T_s''(\omega_0)}{T_s'(\omega_0)}-\frac{\omega_1^2T_s''(\omega_0)}{4T_s(\omega_0)}, \qquad
		\alpha =\frac{2}{s\omega_0\omega_1} (c^2+ \frac{\omega_1^2T_s''(\omega_0)}{2 T_s(\omega_0)}-r_s),
		\end{equation}
		and  $r_s$ is defined by induction as $r_0=0$, $r_1=\frac{s^2\omega_1^3}{4\omega_0}:=\Delta_1$
    and
		$$
		r_i=\nu_ir_{i-1}+\kappa_ir_{i-2}+\Delta_i,\qquad \Delta_i=\mu_i\frac{sT_{i-1}'(\omega_0)\omega_1}{(i-1)T_{i-1}(\omega_0)},\qquad i=2,\ldots s.
		$$
		%\begin{eqnarray}
		%r_0&=&0\\
		%r_1&=&\frac{s^2\omega_1^3}{4\omega_0}:=\Delta_1\\
		%r_i&=&\nu_ir_{i-1}+\kappa_ir_{i-2}+\Delta_i,
		%\end{eqnarray}
		Then, $\overline X_n$ yields order two for the invariant measure, i.e. \eqref{eq:difference1i} holds with $r=2$, and in addition
		\begin{equation} \label{eq:convtwo}
		|\IE(\phi(\overline X_n) - \int_{\IR^d} \phi(x) \rho_\infty(x) dx| \leq C_1 e^{-\lambda t_n} + C_2h^2,
		\end{equation}
		for all $t_n=nh$, $\phi\in C_P^\infty(\IR^d,\IR)$, where $C_1,C_2$ are independent of $h$ assumed small enough, and $C_2$ is independent of the initial condition $X_0$.
	\end{theorem}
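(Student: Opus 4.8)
The plan is to carry out a weak backward error analysis for the invariant measure, following the postprocessor framework of \cite{Vil15} (see also \cite{LMT14,AVZ13}). Write $\calL\phi=-\nabla V\cdot\nabla\phi+\tfrac{\sigma^2}{2}\Delta\phi$ for the generator of \eqref{eq:langevin}, so that $\calL^*\rho_\infty=0$ for the Gibbs density $\rho_\infty\propto e^{-2\sigma^{-2}V}$. The first step is the one-step weak Taylor expansion of the modified scheme: adapting Lemma~\ref{lemma:localerror} to incorporate the extra term in the internal stage \eqref{eq:defR1new} and specializing to $f=-\nabla V$ with additive noise ($g$ constant, equal to $\sigma$ times the identity), I would establish
\[
\IE\bigl(\phi(X_1)\mid X_0=x\bigr)=\phi(x)+h\,\calL\phi(x)+h^2\,\mathcal{A}_2\phi(x)+\bigo(h^3)
\]
for all $\phi\in C_P^\infty(\IR^d,\IR)$, where $\mathcal{A}_2$ is an explicit second order differential operator whose coefficients are polynomial in $\omega_0,\omega_1,T_s(\omega_0),T_s'(\omega_0),T_s''(\omega_0)$, in $\alpha$, and in the auxiliary constant $r_s$. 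The role of $r_s$ is to record, through the three-term Chebyshev recurrence $K_i=\mu_ihf(K_{i-1})+\nu_iK_{i-1}+\kappa_iK_{i-2}$, the accumulated coefficient of the drift--noise cross term $h\,\nabla f(x)\,Q$ (with $Q=\sigma\Delta W$) reaching the last stage $X_1=K_s$: each stage contributes the source $\Delta_i=\mu_i\,sT_{i-1}'(\omega_0)\omega_1/((i-1)T_{i-1}(\omega_0))$ coming from $\mu_ihf(K_{i-1})$ and inherits $\nu_i r_{i-1}+\kappa_i r_{i-2}$, with $r_1=\Delta_1$ and $r_0=0$ dictated by the modified first stage. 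The correction $\alpha h(f(x+\nu_1Q)-2f(x)+f(x-\nu_1Q))$ contributes to $\mathcal{A}_2$ only through its even part $\alpha\nu_1^2\,\nabla^2 f(x)(Q,Q)+\bigo(Q^4)$, evaluated against $\IE(Q\otimes Q)=\sigma^2 h\,\mathrm{Id}$; in particular it leaves $\calL$ unchanged, consistently with the remark preceding the theorem. Thus $\mathcal{A}_2$ becomes known explicitly as a function of the single free parameter $\alpha$, while $c$ enters only through the postprocessor.

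Next, since $X_n$ depends only on the increments up to time $t_{n-1}$ whereas the Gaussian $\xi$ in $\overline X_n=X_n+c\sigma\sqrt h\,\xi$ is the increment of the step issued from $X_n$, $\xi$ is independent of $X_n$, hence $\IE(\phi(\overline X_n))=\IE\bigl((P_h\phi)(X_n)\bigr)$ with $P_h\phi(x)=\IE(\phi(x+c\sigma\sqrt h\,\xi))=\phi(x)+h\,\mathcal{P}_1\phi(x)+\bigo(h^2)$ and $\mathcal{P}_1\phi=\tfrac{c^2\sigma^2}{2}\Delta\phi$. Under \eqref{eq:Vgrowth} and the global Lipschitzness of $\nabla V$, the uniform-in-$h$ moment bounds of \cite{MiT04} hold (as in the proof of Theorem~\ref{thm:conv}); combining the uniform-in-$h$ contraction along the drift directions provided by $\eta>0$ (in the spirit of Theorem~\ref{thm:stabOU}) with a Lyapunov function built from \eqref{eq:Vgrowth}, one obtains geometric ergodicity of the chain $\{X_n\}$, a unique invariant law with density $\rho^h$, and the expansion $\rho^h=\rho_\infty(1+h\rho^{(1)}+\bigo(h^2))$, where $\rho^{(1)}$ solves the (well posed, as the right-hand side has zero mean) Poisson equation $\calL^*(\rho_\infty\rho^{(1)})=-(\mathcal{A}_2-\tfrac12\calL^2)^*\rho_\infty$.

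Inserting these expansions into $\lim_{n\to\infty}\IE(\phi(\overline X_n))=\int_{\IR^d}(P_h\phi)\,\rho^h\,dx$, and using $\calL^*\rho_\infty=0$, the fact that $\mathcal{A}_2$ and $\mathcal{P}_1$ annihilate constants, together with the solvability of $\calL\psi=\phi-\int_{\IR^d}\phi\,\rho_\infty\,dx$ (valid by ergodicity), the $\bigo(h)$ coefficient of $\int_{\IR^d}(P_h\phi)\,\rho^h\,dx-\int_{\IR^d}\phi\,\rho_\infty\,dx$ reduces to a fixed linear functional of $\phi$ that vanishes identically if and only if
\[
\int_{\IR^d}\Bigl(\mathcal{A}_2-\tfrac12\calL^2+[\calL,\mathcal{P}_1]\Bigr)\phi\;\rho_\infty\,dx=0,\qquad[\calL,\mathcal{P}_1]:=\calL\mathcal{P}_1-\mathcal{P}_1\calL,
\]
which is the order-two condition for the invariant measure. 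For the overdamped Langevin structure, the relation $\nabla\rho_\infty=-2\sigma^{-2}(\nabla V)\rho_\infty$ together with $\calL^*\rho_\infty=0$ forces, after integration by parts, the cancellation of all but a few terms, so that the displayed identity collapses to scalar relations between $c^2$, $\alpha$ and $r_s$; one then checks, using $T_s'=sU_{s-1}$ and $s\omega_1/T_s(\omega_0)=1/U_{s-1}(\omega_0)$, that the definitions \eqref{eq:alphac} of $c$ and $\alpha$ (solved successively for $c^2$ and then $\alpha$) are exactly what makes the obstruction vanish. Hence the $\bigo(h)$ term vanishes, $\int_{\IR^d}(P_h\phi)\,\rho^h\,dx-\int_{\IR^d}\phi\,\rho_\infty\,dx=\bigo(h^2)$ with the constant independent of $X_0$, which is \eqref{eq:difference1i} with $r=2$; combined with the geometric ergodicity above, which controls the transient $|\IE(\phi(\overline X_n))-\int_{\IR^d}(P_h\phi)\,\rho^h\,dx|$ by $C_1e^{-\lambda t_n}$ uniformly in $h$, this yields \eqref{eq:convtwo}.

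The main obstacle is the combination of the first step with the verification in the third: pinning down $\mathcal{A}_2$ exactly for the multistage, noise-modified scheme --- in particular the emergence of $r_s$ through the Chebyshev recurrence and of the Hessian contribution of \eqref{eq:defR1new} --- and then the lengthy but elementary algebra showing that the prescribed $\alpha$ and $c$ annihilate the resulting invariant-measure obstruction. A secondary technical point is the rigorous justification, under \eqref{eq:Vgrowth} and the Lipschitz hypothesis, of the density expansion $\rho^h=\rho_\infty(1+h\rho^{(1)}+\cdots)$ and of the uniform-in-$h$ geometric mixing of the numerical chain, for which one follows the arguments of \cite{MiT04,Vil15}.
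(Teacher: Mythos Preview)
Your proposal is correct and follows essentially the same route as the paper: compute the one-step weak Taylor expansion of the modified scheme (this is Lemma~\ref{lemma:postprocessor}), identify the postprocessor operator $\overline{\mathcal{A}}_1=\mathcal{P}_1=\tfrac{c^2\sigma^2}{2}\Delta$, invoke the order-two criterion $\bigl\langle(\mathcal{A}_1+[\calL,\overline{\mathcal{A}}_1])\phi\bigr\rangle=0$ from \cite{Vil15} (your extra $-\tfrac12\calL^2$ is harmless since $\langle\calL^2\phi\rangle=0$), and reduce it by integration by parts to two scalar conditions that fix $c^2$ and $\alpha$.

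One concrete slip to correct before you carry out the algebra: $r_s$ does \emph{not} track the drift--noise cross term $h f'(X_0)Q$. That coefficient already has the closed form $\tfrac{T_s''(\omega_0)\omega_1^2}{T_s(\omega_0)}+\tfrac{\omega_1}{2}$ from Lemma~\ref{lemma:localerror} and becomes the constant $c_4$ in \eqref{eq:c234}. The recursion defining $r_s$ instead records (together with the $\alpha$-correction propagated through the stages) the coefficient of the Hessian contribution $h^2\tfrac{\sigma^2}{2}f''(X_0)(\xi_n,\xi_n)$ in $K_s$, i.e.\ the constant $c_3$ multiplying $\tfrac{\sigma^2}{2}\phi'\!\sum_i f''(e_i,e_i)$ in $\mathcal{A}_1$. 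With this reattribution the two scalar equations become $c_3-c_2-c^2=0$ and $c_4-\tfrac14-\tfrac{c_2}{2}-c^2=0$, which are exactly \eqref{eq:alphac}.
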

		The proof of Theorem \ref{thm:postprocessor} relies on the following postprocessing analysis  from \cite{Vil15}.
	Consider a scheme \eqref{eq:methnum} with bounded moments and assumed ergodic when applied to \eqref{eq:langevin}, where the potential $V$ satisfies the above ergodicity assumptions. 
	Assume that the scheme has a weak Taylor expansion after one time step of the form
	\begin{equation}\label{eq:weaktaylor}
	\IE(\phi(X_1)|X_0=x) = \phi(x) + h\calL \phi(x) + h^2\mathcal{A}_1 \phi(x) + \bigo(h^3),
	\end{equation}
		and consider a postprocessor of the form $\overline X_n = G_n(X_n)$ where 
			\begin{equation}\label{eq:weaktaylorproc}
	\IE(\phi(\overline X_1)|X_1=x) = \phi(x) + h\overline{\mathcal{A}}_1 \phi(x) + \bigo(h^3),
	\end{equation}
	where the constants in $\bigo$ in \eqref{eq:weaktaylor},\eqref{eq:weaktaylorproc} have at most a polynomial growth with respect to $x$. Here $\mathcal{L}\phi=\phi'f+\sigma^2/2 \Delta \phi$ denotes generator of the SDE and $\mathcal{A}_1,\overline{\mathcal{A}}_1$ are linear differential operators with smooth coefficients. 
	Note that $\mathcal{A}_1\ne \mathcal{L}^2/2$ in general (otherwise the scheme has weak order 2).
	If the  condition $(A_1 + [\calL,\overline{\mathcal{A}}_1])^* \rho_\infty =0$ holds,
	equivalently,
		\begin{equation}\label{eq:condproc}
		\left\langle \mathcal{A}_1\phi +[\calL,\overline{\mathcal{A}}_1] \phi \right\rangle = 0
		\end{equation}
		for all test function $\phi$, 
		where we define $\left\langle \phi \right\rangle = \int_{\mathbb{R}^d} \phi \rho_\infty dx$,
		then it is shown in \cite[Theorem 4.1]{Vil15} that 
		$\overline X_n$ has order two for the invariant measure, i.e. the convergence estimates \eqref{eq:difference1i} with $r=2$ and \eqref{eq:convtwo} hold.
	Before we can apply the above result, the following lemma 
	allow to compute the weak Taylor expansion of the modified scheme.
	\begin{lemma} \label{lemma:postprocessor}
	Consider the scheme \eqref{eq:newmethod} with modified stage \eqref{eq:defR1new}
	and assume the hypotheses of Theorem \ref{thm:postprocessor}.
	Then \eqref{eq:weaktaylor} holds where
	the linear differential operator $\mathcal{A}_1$ is given by
		\begin{eqnarray} \label{eq:Alangevin}
		\mathcal{A}_1\phi &=& \frac12 \phi''(f,f)  + \frac{\sigma^2}2 \sum_{i=1}^d\phi'''(e_i,e_i,f) + \frac{\sigma^4}{8} \sum_{i,j=1}^d \phi^{(4)}(e_ie_i,e_j,e_j)
		+ c_2 \phi'f'f \nonumber\\
		&+&c_3 \frac{\sigma^2}2 \phi' \sum_{i=1}^df''(e_i,e_i) + c_4 \sigma^2 \sum_{i=1}^d \phi''(f'e_i,e_i),
		\end{eqnarray}
		where $f=-\nabla V(x)$ and
		\begin{equation}\label{eq:c234}
		c_2 = \frac{\omega_1^2T_s''(\omega_0)}{2 T_s(\omega_0)},\qquad
		c_3 = r_s+\frac{\omega_0}{s\omega_1}\alpha,\qquad
		c_4 = \frac{T_s''(\omega_0)\omega_1}{T_s'(\omega_0)} + \frac{\omega_1}2.
		\end{equation}
	\end{lemma}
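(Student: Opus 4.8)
The plan is to compute the one--step weak Taylor expansion \eqref{eq:weaktaylor} of the modified scheme directly and read off the operator $\mathcal{A}_1$; the identity \eqref{eq:Alangevin} is then all that the ambient postprocessing criterion \eqref{eq:condproc} (from \cite{Vil15}) needs. Throughout I use the additive--noise structure: here $Q=\sigma\Delta W=\sigma\sqrt h\,\xi$ with $\xi\sim\calN(0,I_d)$, so $Q=\bigo(h^{1/2})$, $\IE Q=0$, $\IE(Q_iQ_j)=\sigma^2 h\,\delta_{ij}$, and there are no $g'$-type terms. The global Lipschitz and growth hypotheses on $\nabla V$ give, exactly as in the proof of Theorem \ref{thm:conv}, bounded moments of all orders for $X_n$ and remainders with at most polynomial growth in $x$, so the formal expansions below are legitimate.

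First I would expand $X_1=K_s$ in powers of $h^{1/2}$, retaining every contribution that can survive at order $h^2$ once composed with $\phi$ and averaged, i.e. all the $\bigo(h^{1/2})$, $\bigo(h)$, $\bigo(h^{3/2})$ and mean--valued $\bigo(h^2)$ terms. Using the three--term recurrence $K_i=\mu_ihf(K_{i-1})+\nu_iK_{i-1}+\kappa_iK_{i-2}$ together with \eqref{eq:recTU1}, \eqref{eq:recTU2} and $T_s'=sU_{s-1}$, one obtains by induction on $i$ a refinement of \eqref{eq:K_i} in which each structural term ($f$, $f'f$, $f'Q$, $f''(Q,Q)$, and the higher-order ones whose conditional expectation vanishes) carries a coefficient obeying a linear recurrence driven by $\mu_i,\nu_i,\kappa_i$. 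The sequence $r_i$ of the statement is exactly the accumulator attached to the relevant noise--quadratic contribution $h\,f''(X_0)(Q,Q)$; its defining recurrence $r_i=\nu_ir_{i-1}+\kappa_ir_{i-2}+\Delta_i$ is solved in closed form through the substitution $y_i=r_iT_i(\omega_0)$, which satisfies the constant--coefficient recurrence $y_i=2\omega_0y_{i-1}-y_{i-2}$ shared by $T_i(\omega_0)$ and $U_{i-1}(\omega_0)$, so that $r_s$ becomes an explicit function of the Chebyshev data. The exceptional initialization $r_1=\Delta_1=s^2\omega_1^3/(4\omega_0)$ records the asymmetry of the first stage, where $f$ is evaluated at the shifted argument $X_0+\nu_1Q$ while the $Q$--content of $K_1$ equals $\kappa_1Q$ with $\kappa_1\neq\nu_1$. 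I would then feed in the modification \eqref{eq:defR1new}: since $f(X_0+\nu_1Q)-2f(X_0)+f(X_0-\nu_1Q)=\nu_1^2f''(X_0)(Q,Q)+\bigo(|Q|^4)$, the extra term adds $\alpha h\nu_1^2f''(X_0)(Q,Q)+\bigo(h^3)$ to $K_1$; being $\bigo(h^2)$, it propagates to $X_1=K_s$ purely linearly, the transfer factor from $K_1$ to $K_s$ being $\omega_0U_{s-1}(\omega_0)/T_s(\omega_0)=\omega_0/(s\omega_1)$ (the value at $i=s$ of the same recurrence with data $b_0=0$, $b_1=1$). This is the origin of the $\tfrac{\omega_0}{s\omega_1}\alpha$ contribution to $c_3$; note that the symmetric difference in \eqref{eq:defR1new} cancels the $f'Q$ and $f'''(Q,Q,Q)$ contributions, so $\alpha$ touches neither $c_4$ nor the stability function, consistently with the remark preceding Theorem \ref{thm:postprocessor}.

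Finally I would substitute the expansion of $X_1-X_0$ into $\IE(\phi(X_1)\mid X_0=x)=\sum_{k=0}^{4}\tfrac1{k!}\IE\big(\phi^{(k)}(x)\,(X_1-x)^{\otimes k}\big)+\bigo(h^3)$ and evaluate the Gaussian moments ($\IE\xi_i\xi_j=\delta_{ij}$, $\IE\xi_i\xi_j\xi_k\xi_\ell=\delta_{ij}\delta_{k\ell}+\delta_{ik}\delta_{j\ell}+\delta_{i\ell}\delta_{jk}$, odd moments zero). The $h^0$ and $h^1$ coefficients reproduce $\phi$ and $h\calL\phi$ (a consistency check with weak order one), and the $h^2$ coefficient is $\mathcal{A}_1\phi$. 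Matching terms: the universal part $\tfrac12\phi''(f,f)+\tfrac{\sigma^2}2\sum_i\phi'''(e_i,e_i,f)+\tfrac{\sigma^4}8\sum_{i,j}\phi^{(4)}(e_i,e_i,e_j,e_j)$ comes from the self-- and cross--products of the leading $Q$-- and $hf$--terms; $c_2=\omega_1^2T_s''(\omega_0)/(2T_s(\omega_0))$ is the $h^2f'f$ coefficient of the deterministic Chebyshev part; $c_4=\tfrac{T_s''(\omega_0)\omega_1}{T_s'(\omega_0)}+\tfrac{\omega_1}2$ is the $hf'Q$ coefficient already read off in Lemma \ref{lemma:localerror} (cross term $\phi''(\text{leading }Q,hf'Q)$); and the coefficient of $\tfrac{\sigma^2}2\phi'\sum_if''(e_i,e_i)$ is $c_3=r_s+\tfrac{\omega_0}{s\omega_1}\alpha$, the sum of the purely--Chebyshev $f''(Q,Q)$ accumulator and the $\alpha$--contribution.

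The main obstacle is the bookkeeping in the second step: one must be disciplined about which $\bigo(h^{3/2})$ and $\bigo(h^2)$ pieces actually contribute after averaging — only those that, through $\phi$, pair with the leading $Q$-- or $hf$--terms to form an $h^2$ — and one must keep the first--stage argument--shift $\nu_1$ strictly distinct from that stage's $Q$--content $\kappa_1$, which is precisely what forces the non--generic value of $\Delta_1$ and hence the exact form of $c_3$. Once the expansion is organized this way, what remains is the closed--form solution of the $r_i$ recurrence and routine simplification using $\omega_1=T_s(\omega_0)/T_s'(\omega_0)$ and $T_s'=sU_{s-1}$.
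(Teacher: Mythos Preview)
Your approach is essentially the paper's: both extend the expansion of Lemma~\ref{lemma:localerror} by induction on the stages $K_i$, isolate the $\alpha$-contribution via the homogeneous linear recurrence (the paper packages this as $\tilde r_i=r_i+d_i$ with $d_1=\alpha$ and $d_s=\omega_0\alpha/(s\omega_1)$, which is exactly your transfer-factor computation), and then expand $\phi$ to read off $\mathcal A_1$. The closed-form solution of the $r_i$ recurrence that you sketch is not actually needed, since the statement only involves $r_s$ as defined recursively.
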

	\begin{proof}
	Adapting the proof of Lemma \ref{lemma:localerror}, the internal stage $K_i$ defined in \eqref{eq:newmethod} (and \eqref{eq:defR1new} for $i=1$)
		satisfies \eqref{eq:K_i} where $h^2R_{i,h}(X_0)$ can be replaced by
		\begin{equation} \label{eq:K_inew}
		\frac{\omega_1^2T_i''(\omega_0)}{2 T_i(\omega_0)} h^2f'(X_0)f(X_0) 
		+ \tilde{r}_i \frac{\sigma^2}2 f''(X_0)(\xi_n,\xi_n) + h^{5/2} \widetilde R_{i} +  h^3 \widetilde R_{i,h}(X_0),
		\end{equation}
		where $\IE(\widetilde R_{i})=0$ and all the moments of $\widetilde R_{i},\widetilde R_{i,h}(X_0)$ are bounded with polynomial growth with respect to $X_0$. Here, $\tilde{r}_i$ is defined by induction as $\tilde{r}_0=0$, 
		$\tilde{r}_1=\Delta_1+\alpha$, and 
		$$
		\tilde{r}_i=\nu_i\tilde{r}_{i-1}+\kappa_i\tilde{r}_{i-2}+\Delta_i,\quad i=2,\ldots,s.
		$$
		%\modi{Indeed, We set:
			%\begin{eqnarray}
			%\tilde{r}_0&=&0\\
			%\tilde{r}_1&=&\Delta_1+\alpha\\
			%\tilde{r}_i&=&\nu_i\tilde{r}_{i-1}+\kappa_i\tilde{r}_{i-2}+\Delta_i,
			%\end{eqnarray}
			We have $\IE(\widetilde R_{i})=0$ because $\widetilde R_{i}$ is a linear combination of
			$f'(X_0)f'(X_0)\xi_n$, $f''(X_0)(f(X_0),\xi_n)$, and $f'''(X_0)(\xi_n,\xi_n,\xi_n)$ with zero mean values (recall that odd moments of $\xi_n$ vanish).
			Next, observing that the difference $d_i=\tilde{r}_i-r_i$ satisfies $d_0=0,d_1=\alpha$,
			and 
			$
			d_i=\nu_id_{i-1}+\kappa_id_{i-2},i=2,\ldots,s,
			$
			we deduce $$\tilde{r}_i=r_i+d_i,\qquad d_i=\frac{U_{i-1}(\omega_0)}{T_i(\omega_0)}\omega_0\alpha~~\forall~~i=0,..,s.$$
			In particular, taking $i=s$ in \eqref{eq:K_i},\eqref{eq:K_inew},
			and expanding \eqref{eq:weaktaylor}, we deduce that \eqref{eq:Alangevin} holds
			with $c_2,c_3,c_4$ defined in \eqref{eq:c234} where we note that $c_3=\widetilde r_s=r_s+d_s$.
	\end{proof}
	
	\begin{proof}[Proof of Theorem \ref{thm:postprocessor}]
	Following the proof of \cite[Theorem 4.2]{Vil15} 
	(see also \cite[Theorem 5.8]{LaV17})
	where we apply repeatedly integration by parts for the integral in \eqref{eq:condproc}, 
	using Lemma \ref{lemma:postprocessor} for the expression of 
	$\mathcal{A}_1$, we deduce that the quantity in \eqref{eq:condproc}
	satisfies
		$$
		\left\langle \mathcal{A}_1\phi +[\calL,\overline{\mathcal{A}}_1]\phi \right\rangle = \sum_{i=1}^d\left\langle  (c_3-c_2-c^2) \frac{\sigma^2}2 \phi' f''(e_i,e_i) + (c_4-\frac14 - \frac{c_2}2 -c^2) \sigma^2 \phi''(f'e_i,e_i) \right\rangle,
		$$
		where we use 
		$[\calL,\overline{\mathcal{A}}_1]=
	-c^2\sigma^2\big(1/2\, \phi' \sum_{i=1}^df''(e_i,e_i) +  \sum_{i=1}^d \phi''(f'e_i,e_i) \big)$ for $\overline{\mathcal{A}}_1\phi =c^2\sigma^2/2\, \Delta \phi$.
		We see that the above quantity \eqref{eq:condproc} vanished if $c_3-c_2-c^2=c_4-\frac14 - \frac{c_2}2 -c^2=0$, equivalently,
		\begin{equation} \label{eq:ordercond2}
		c_3-c_2  = c^2 = c_4-\frac14 - \frac{c_2}2.
		\end{equation}
		For the values of $\alpha,c$ defined in \eqref{eq:alphac}, we obtain that \eqref{eq:ordercond2} indeed holds and we deduce that the order two condition \eqref{eq:condproc} for the invariant measure is satisfied. This concludes the proof.
	\end{proof}
	%
	%\begin{remark}
		%TO BE COMPLETED (not accurate)
		%\begin{eqnarray}
		%c_3-c_2 &=& \alpha-\frac16 + \frac2{3s^2} - \frac4{45} \eta +\bigo(\eta^2+s^{-4})\\
		%c_4-\frac14 - \frac{c_2}2 &=& %\frac34 \frac{T_s(\omega_0)T_s''(\omega_0)}{T_s'(\omega_0)^2} -\frac14 + \frac{w_1}2 = 
		%\frac1{4s^2} + \frac{4}{30} \eta
		%+\bigo(\eta^2+s^{-4})
		%\end{eqnarray}
		%Using
		%$
		%c_3 = \frac{c_4}2-\frac18 + \frac{c_2}4
		%$
	%\end{remark}
	%
	%
		
\section{Numerical experiments}
	\label{sec:numerical}
%\moda{A few general issues with the numerical experiments:
%\begin{itemize}
%\item 
%I find it a bit sad that the only truly stiff problems (short time for SK-ROCK) is an example in which we need to modify the SK-ROCK method. Indeed we sell in the intro a new method and as soon as we treat a stiff problems it seems that we have to tweak it ... This indeed is just related to the special example we take but the genuine reader is not supposed to know. In guess with $\lambda_1=\mu_1=500$ the true dynamics is not mean square stable. Why not take 
%$\lambda_1=500, \mu_1=\sqrt{500}$ (we had a similar example with Adrian for the MLMC with two dimensional noise and everything works perfectly ... in fact I would keep the modified SK-ROCK method and example for us or in a remark ...
%\item I would exchange the order of the section 6.3 and 6.4 \modg{OK done}
%\end{itemize}
%}
	
	In this Section, we illustrate numerically our theoretical analysis and we show the performance of the proposed SK-ROCK method and its postprocessed modification PSK-ROCK.
	
	\subsection{A nonlinear nonstiff problem}
	\begin{figure}[tb]
		\centering
		\begin{subfigure}[t]{0.45\textwidth}
			\centering
			\includegraphics[width=1\linewidth]{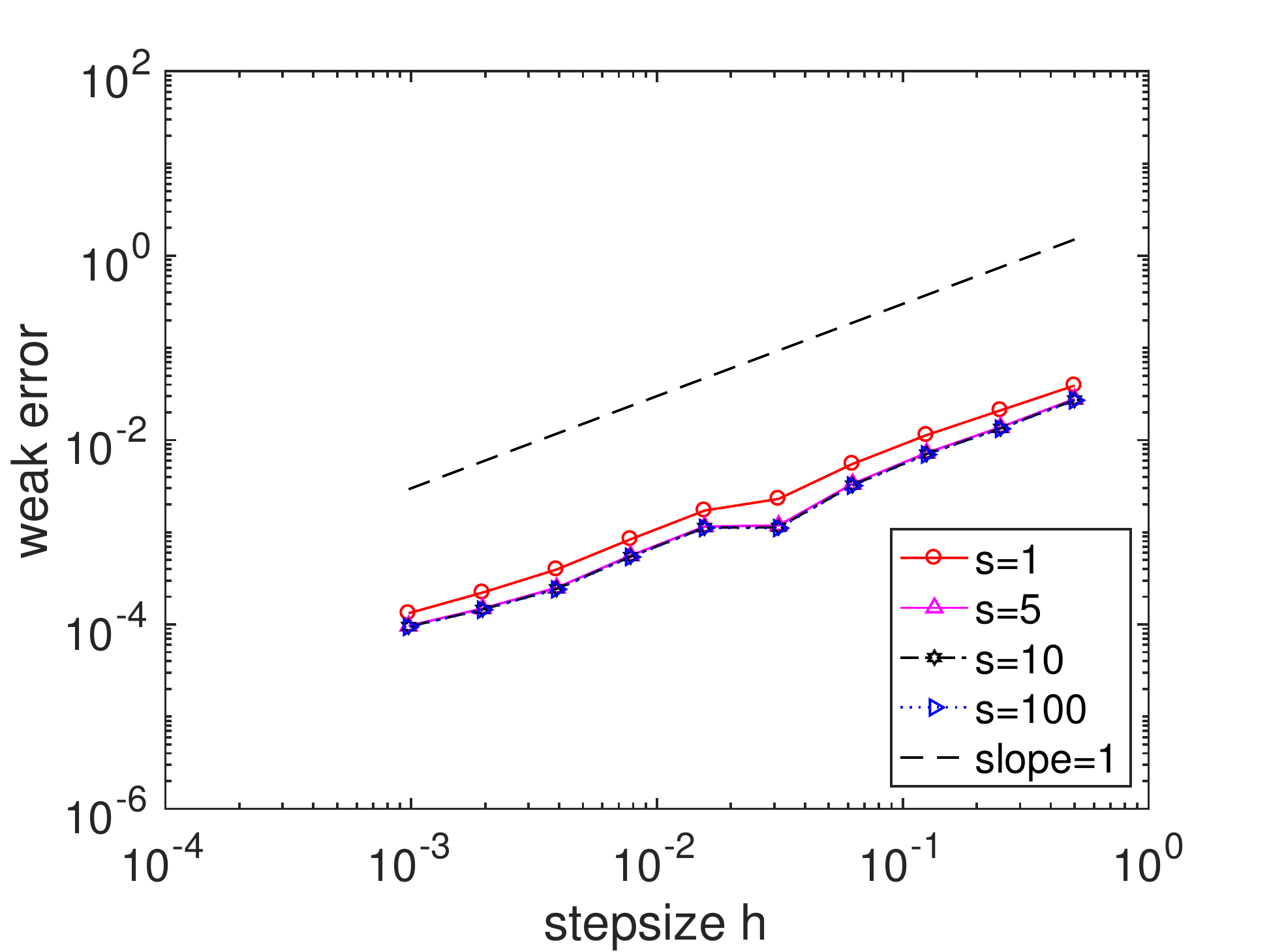}
			\caption{Weak error\\ $|\IE(\arcsinh(X(T))-\IE(\arcsinh(X_N))|.$}
		\end{subfigure}
		\begin{subfigure}[t]{0.45\textwidth}
			\centering
			\includegraphics[width=1\linewidth]{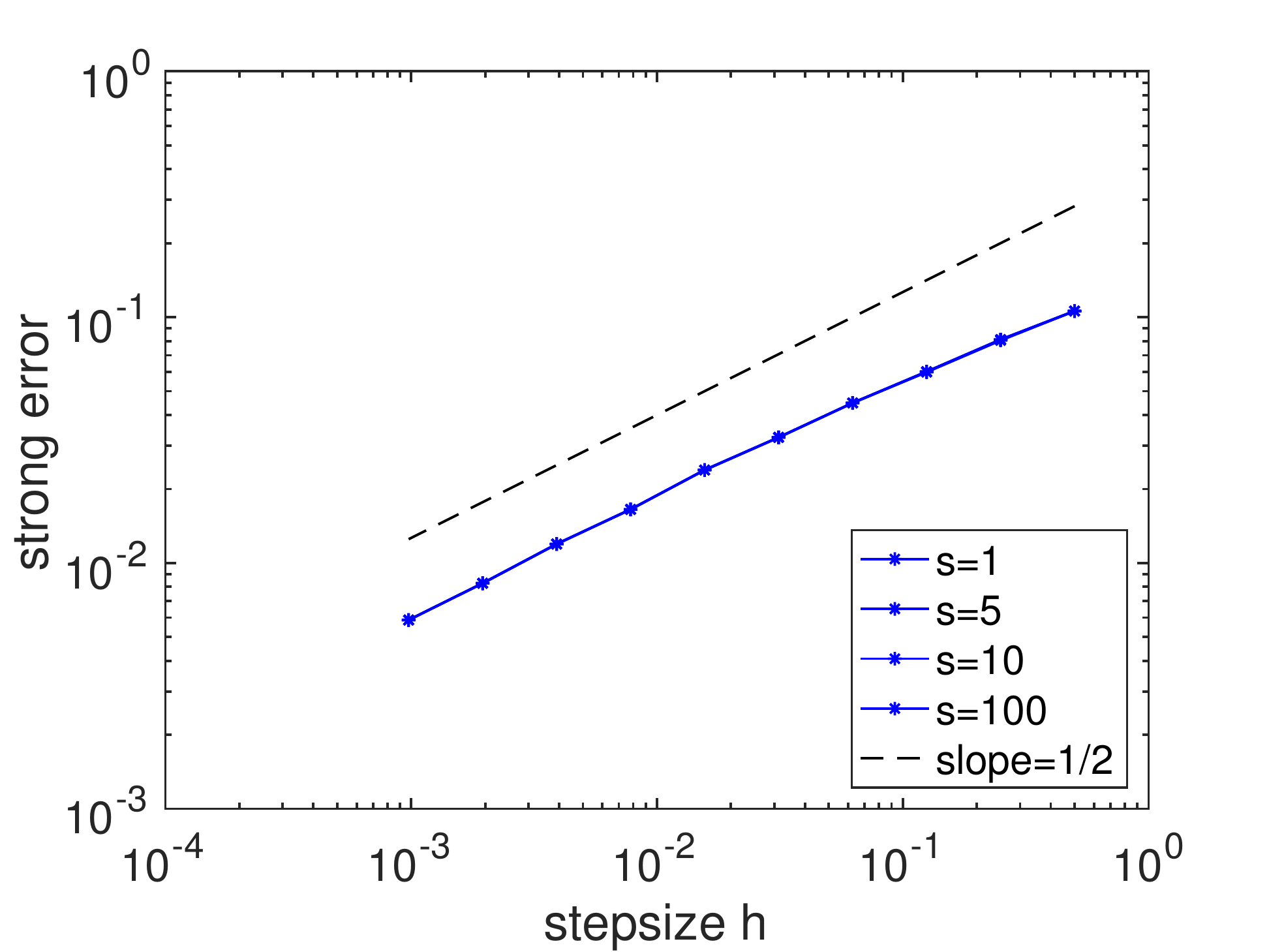}
			\caption{Strong error $\IE(|X(T)-X_N|$).}
		\end{subfigure}
		\caption{Nonlinear problem \eqref{eq:pb1}. Strong and weak convergence plots using SK-ROCK with final time $T=1$, stepsizes $h=2^{-p},p=1..10$, $10^4$ samples and number of stages $s=1,5,10,100.$}
		\label{fig:pb1}
	\end{figure}
	We first consider the following non-stiff nonlinear SDE,
	\begin{equation}\label{eq:pb1}
	dX=\left(\frac14X+\frac12\sqrt{X^2+1}\right)dt+\sqrt{\frac12(X^2+1)}dW,\quad X(0)=0.
	\end{equation}
	whose exact solution is	$X(t)=\sinh(\frac t2+\frac{W(t)}{\sqrt2})$.
	In Figure \ref{fig:pb1}, we consider the SK-ROCK method \eqref{eq:newmethod} and
	plot the strong error $\IE(|X(T)-X_N|$ and the weak error $|\IE(\arcsinh(X(T))-\IE(\arcsinh(X_N))|$
	%second moment errors $|\IE(X(T)^2-\IE(X_N^2|$ 
	at the final time $T=Nh=1$ using $10^4$ samples and number of stages $s=1,5,10,100$.
	We obtain convergence slopes 1 and 1/2, respectively, which confirms Theorem \ref{thm:conv} stating the strong order $1/2$ and weak order $1$ of the proposed scheme. 
	Note that $s=1$ stage is sufficient for the stability of the scheme in the non-stiff case. The results for $s=5,10,100$ yield nearly identical curves which illustrates that the error constants of the method are nearly independent of the stage number of the scheme.

	\subsection{Nonlinear nonglobally Lipschitz stiff problems}
	\begin{figure}[tb]
		\centering
		\begin{subfigure}[t]{0.45\textwidth}
			\centering
			\includegraphics[width=1\linewidth]{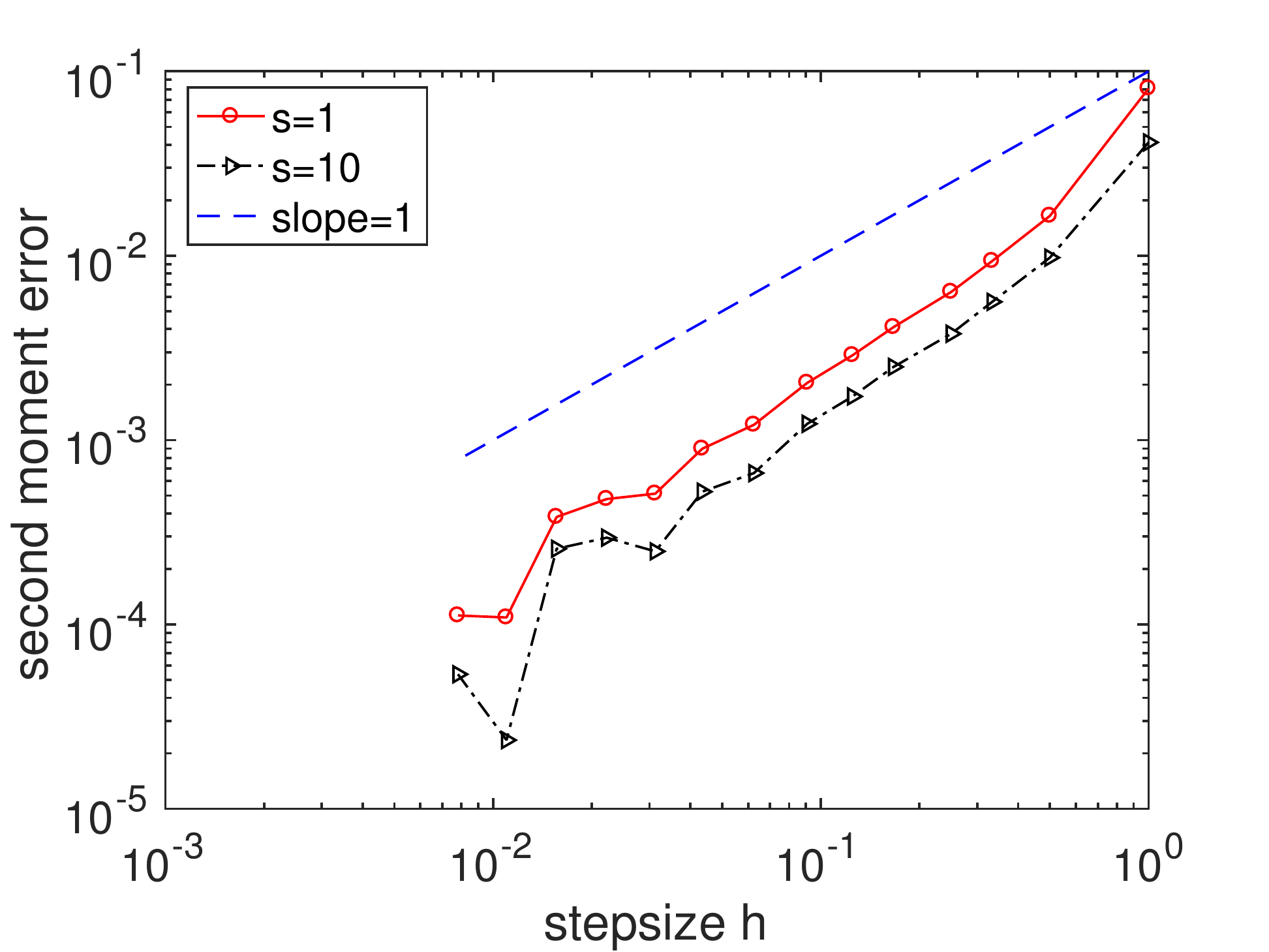}
			\caption{Non-stiff case $-\lambda_1=\mu_1=1$.}
		\end{subfigure}
		\begin{subfigure}[t]{0.45\textwidth}
			\centering
			\includegraphics[width=1\linewidth]{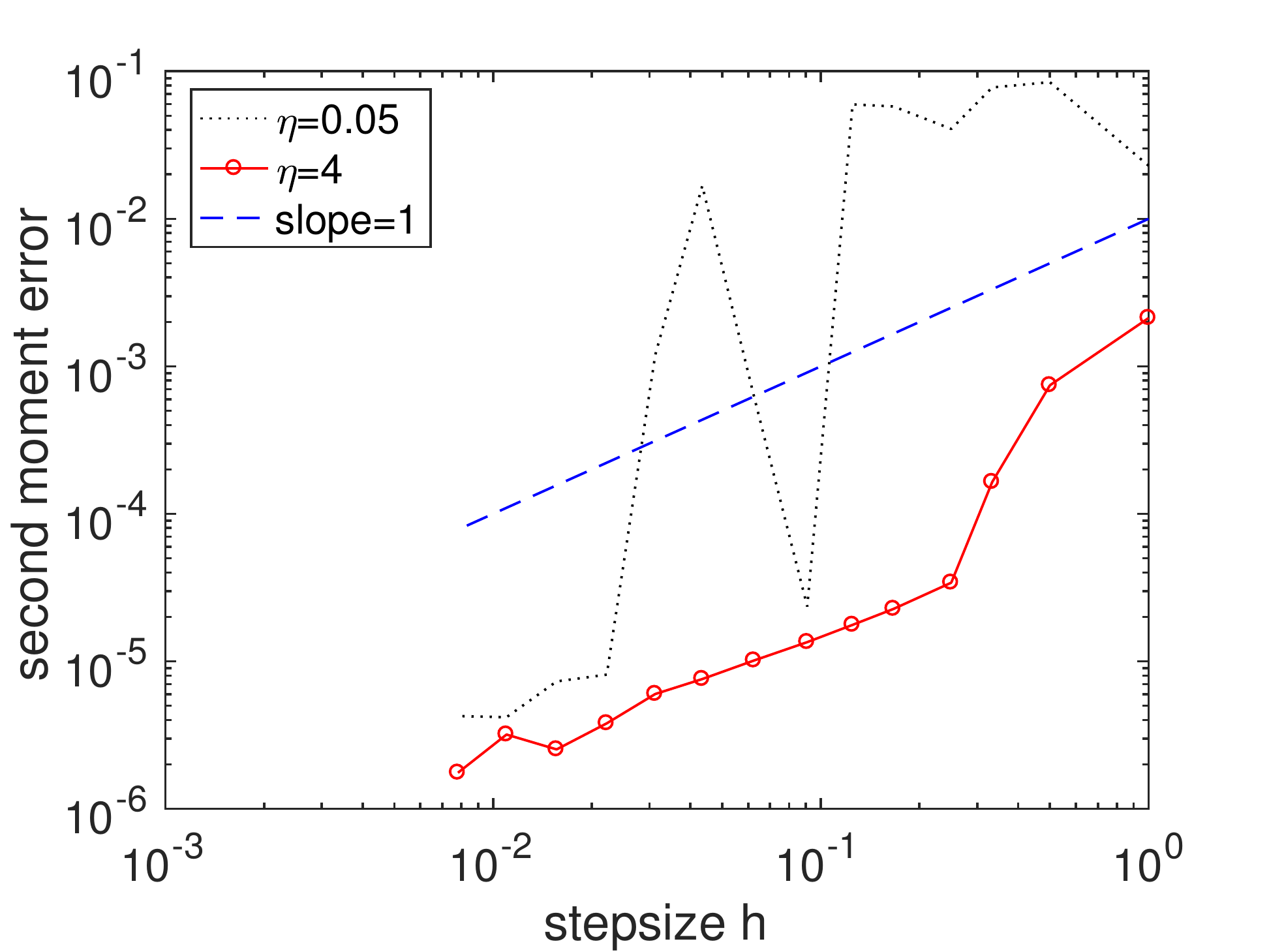}
			\caption{Stiff case $-\lambda_1=\mu_1^2=100$. } \label{fig:populationb}
		\end{subfigure}
		\caption{Nonlinear problem \eqref{eq:population} with $\nu=2,\mu_2=0.5$,$\lambda_2=-1$. Weak convergence plots using SK-ROCK for $\IE(X(T)^2)$ where $T=1$, $h=T/[2^{i/2}],i=1,\ldots,14$, and $10^6$ samples. For the stiff case 
		%(Fig.\ts \ref{fig:populationb})
		(b), the method uses the following number of stages respectively: $s=8,6,5,4,4,3,3,3,2,2,2,1,1,1$ (with damping $\eta=0.05$) and $s=13,9,8,7,6,5,4,4,3,3,3,3,2,2$ (with damping $\eta=4$).} 
		\label{fig:population}
	\end{figure}
	
	Consider the following nonlinear SDE in dimensions $d=2$ with a one-dimensional noise ($d=2,m=1$). This is a modification of a one-dimensional population dynamics model from \cite[Chap.\ts 6.2]{Gar88} considered in \cite{AVZ12b,AVZ13,AbB2} for testing stiff integrator performances,
	\begin{align}\label{eq:population}
	dX&=(\nu(Y-1)-\lambda_1 X(1-X))dt-\mu_1X(1-X)dW,\quad X(0)=0.95,\nonumber \\
	dY&=-\lambda_2Y(1-Y)dt-\mu_2Y(1-Y)dW,\quad Y(0)=0.95.
	\end{align}
	Observe that linearizing \eqref{eq:population} close to the equilibrium $(X,Y)=(1,1)$, we recover for $\nu=0$ the scalar test problem \eqref{eq:lintest}. In Figure \ref{fig:population}
	we consider the SK-ROCK method applied to \eqref{eq:population} with parameters that are identical to those used in \cite[Sect.\ts4.2]{AbB2}. We take
	the initial condition $X(0)=Y(0)=0.95$ close to this steady state and use the parameters $\nu=2,\mu_2=0.5$,$\lambda_2=-1$.
	In a nonstiff regime ($-\lambda_1=\mu_1=1$ in Figure \ref{fig:population}(a)), we observe a convergence slope $1$ for the second moment $\IE(X(T)^2)$
which illustrates the weak order one of the scheme, although our analysis in Theorem \ref{thm:conv} applies only for globally Lipschitz vector fields. The stage number $s=1$ is sufficient for stability, but we also include for comparison the results for $s=10$ (note that the results for $s=50,100$ not displayed here are nearly identical to the case $s=10$). The convergence curves are obtained as an average over $10^6$ samples. 
%Here we denote by $[x]$ the nearest integer to the real number $x$.
	In a stiff regime ($-\lambda_1=\mu_1^{2} =100$ in Figure \ref{fig:population}(b)), we observe for the standard small damping $\eta=0.05$ a stable but not very accurate convergence, due to the severe nonlinear stiffness.
However, considering a slightly larger damping $\eta=4$, in the spirit of the S-ROCK method, yields a stable integration for all considered timesteps and all trajectories and we observe a line with slope one for the SK-ROCK method. Here, given the timesteps~$h$, the numbers of stages $s$ are adjusted as proposed in \eqref{eq:defseta} where $\lambda_{\max}=|\lambda_1|=100$.
%For the largest stepsize $h=1$ and at $t=0$, the SK-ROCK uses $s=17$ stages for stabilizations, whereas the standard S-ROCK would need $35$ stages.

	%both the S-ROCK and SK-ROCK methods reveal unstable, we thus consider a modification of SK-ROCK described in the following remark. 
	\begin{remark} \label{rem:stabilization}
		For severely stiff problems, 
		alternatively to switching to drift-implicit schemes \cite{Hig00,AVZ12b},
		one can consider in SK-ROCK a slightly larger damping $\eta$ and the corresponding stage parameter $s$ below, similar to \eqref{eq:defsnum} and chosen such that the mean-square stability domain length \eqref{eq:defL} satisfies $L > h\lambda_{\max}$,
		\begin{equation} \label{eq:defseta}
		s=\left[\sqrt{\frac{h\lambda_{\max}+1.5}{2\Omega(\eta)}}+0.5\right],
		\end{equation} 
		where $\Omega(\eta)$ is given in Lemma \ref{limits}.
		%Alternatively to using a large value of the damping $\eta$ as considered above, or switching to drift-implicit schemes \cite{Hig00,AVZ12b},} one can consider an additional stabilization in the definition of the noise term and replace
		%$g^r(X_0)$ by $g^r(\widetilde X_0)$ in equation \eqref{eq:defG}, where $\widetilde X_0$ is a perturbation of $X_0$ defined as $X_1$ in \eqref{eq:Cheb1} where $\mu_1=\omega_1/\omega_0$ is unchanged and $\mu_i,\nu_i,\kappa_i,i=2,\ldots,s$ are replaced by
		%%\begin{eqnarray}
		%%\widetilde R_0&=&X_0\\
		%%\widetilde R_1&=&X_0+\widetilde\mu_1hf(X_0)\\
		%%%R_1&=&X_0+(\mu_1+\nu)hf(X_0+\frac{1}{2}G) - \nu hf(X_0-\frac{1}{2}G) +\frac{1}{\omega_0}G\\
		%%\widetilde R_i&=&\widetilde\mu_ihf(\widetilde R_{i-1})+\widetilde\nu_i\widetilde R_{i-1}+\widetilde\kappa_i\widetilde R_{i-2},\quad i=2,\ldots,s-1,\\
		%%\widetilde X_0&=& \widetilde R_{s-1} %\label{eq:newmethod}
		%%\end{eqnarray}
		%%with $\widetilde\mu_1=\frac{\omega_1}{\omega_0}$ and for all $i=2,\ldots,s,$
		%\begin{equation}%\label{coeff}
		%\widetilde \mu_i=\frac{2\omega_1U_{i-1}(\omega_0)}{U_i(\omega_0)},\quad 
		%\widetilde \nu_i=\frac{2\omega_0U_{i-1}(\omega_0)}{U_i(\omega_0)},\quad 
		%\widetilde \kappa_i=-\frac{U_{i-2}(\omega_0)}{U_i(\omega_0)}=1-\widetilde \nu_i.
		%\end{equation}
		%For linear problems ($f(x)=\lambda x$ and p=$h\lambda$), notice that 
		%$
		%\widetilde X_0 = {U_{s-1}(w_0+w_1 p)}/{U_{s-1}(w_0)}\,X_0.
		%$
		%In practice, it is sufficient to apply the above stabilization only for the first few steps of the integration. 
	\end{remark}
	
	\begin{figure}[tb]
		\centering
		\begin{subfigure}[t]{0.45\textwidth}
			\centering
			\includegraphics[width=1\linewidth]{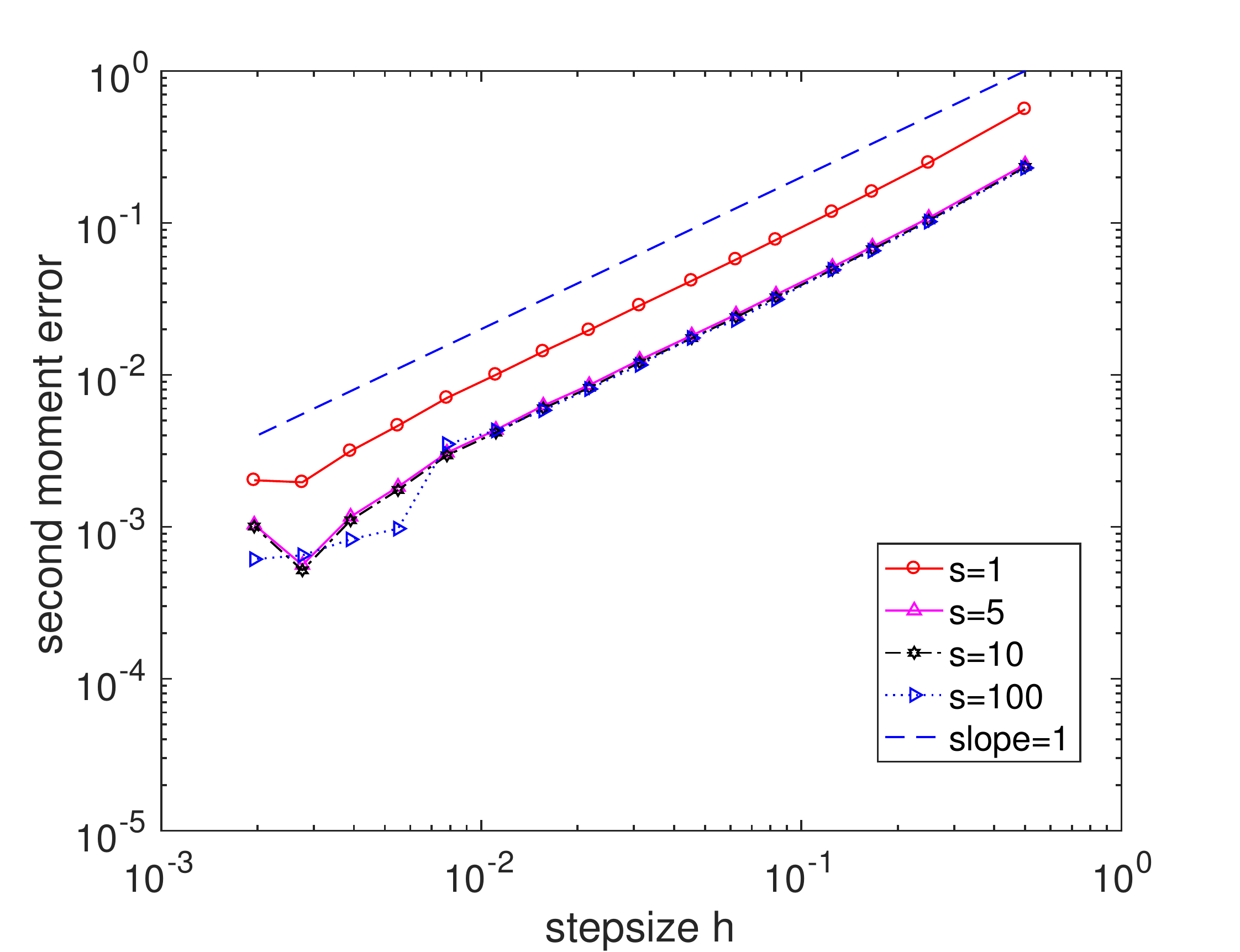}
			\caption{SK-ROCK with $T=0.5$.}
		\end{subfigure}
		\begin{subfigure}[t]{0.45\textwidth}
			\centering
			\includegraphics[width=1\linewidth]{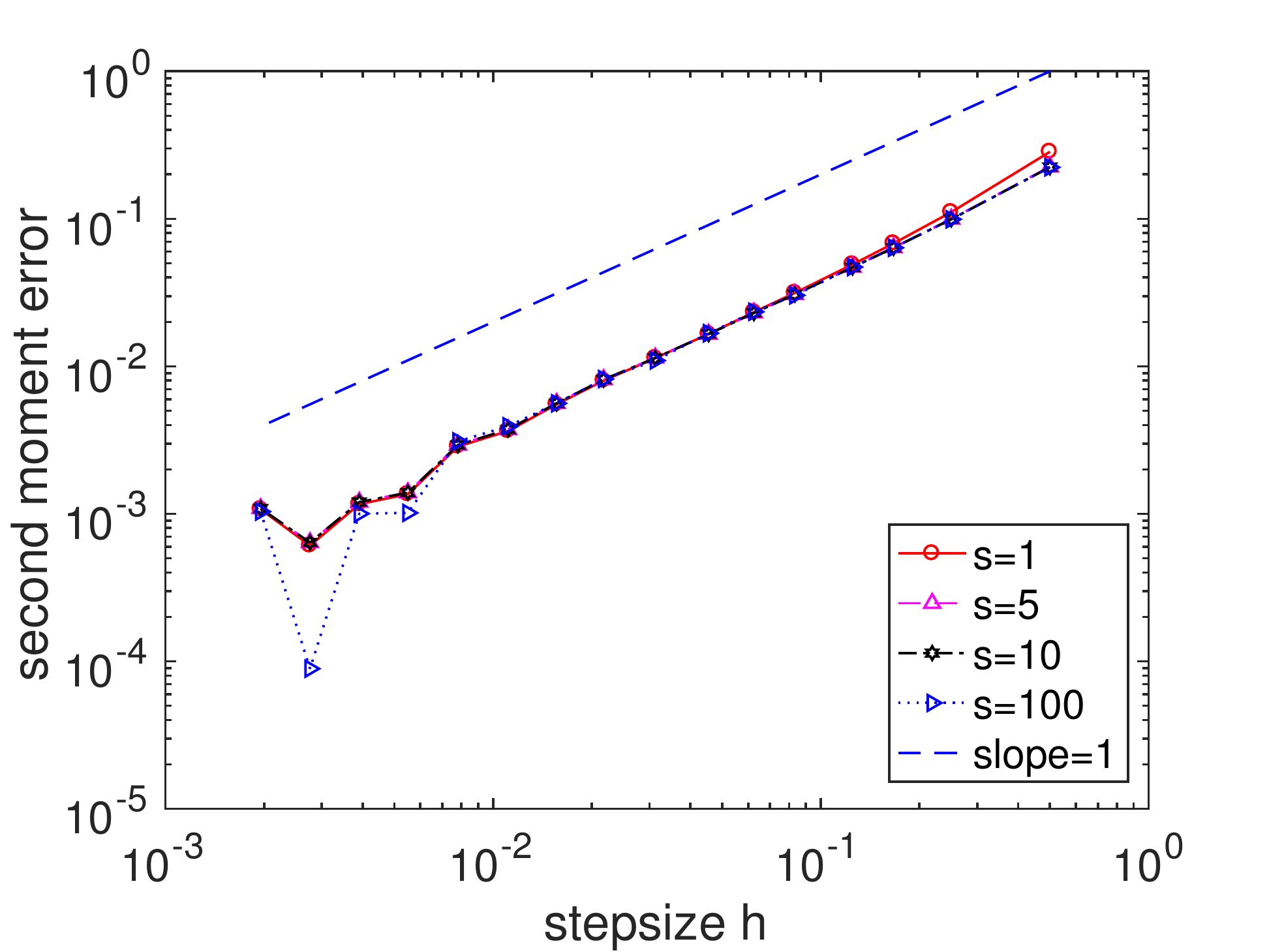}
			\caption{PSK-ROCK with $T=0.5$.}
		\end{subfigure}\\
		\begin{subfigure}[t]{0.45\textwidth}
			\centering
			\includegraphics[width=1\linewidth]{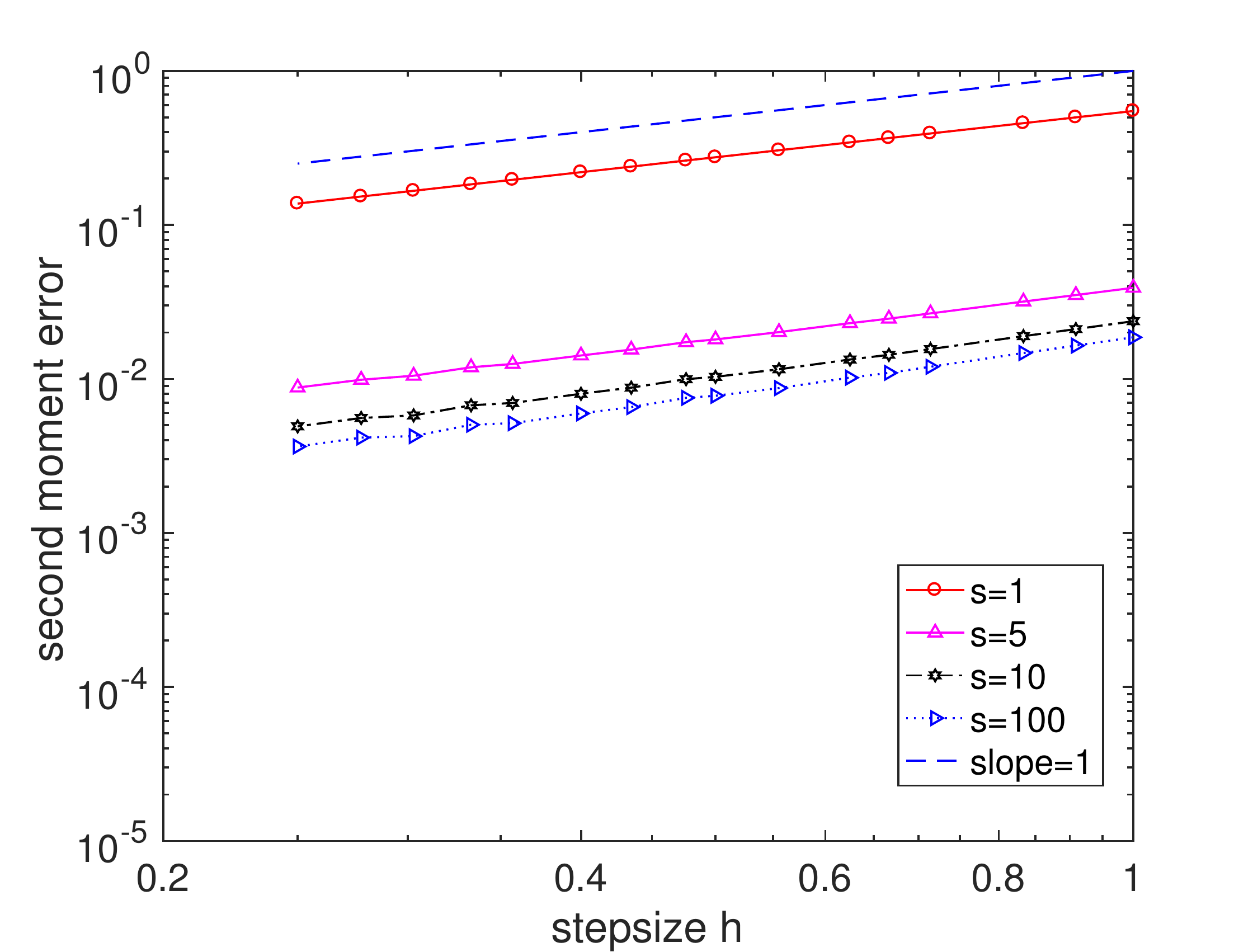}
			\caption{SK-ROCK with $T=10$.}
		\end{subfigure}
		\begin{subfigure}[t]{0.45\textwidth}
			\centering
			\includegraphics[width=1\linewidth]{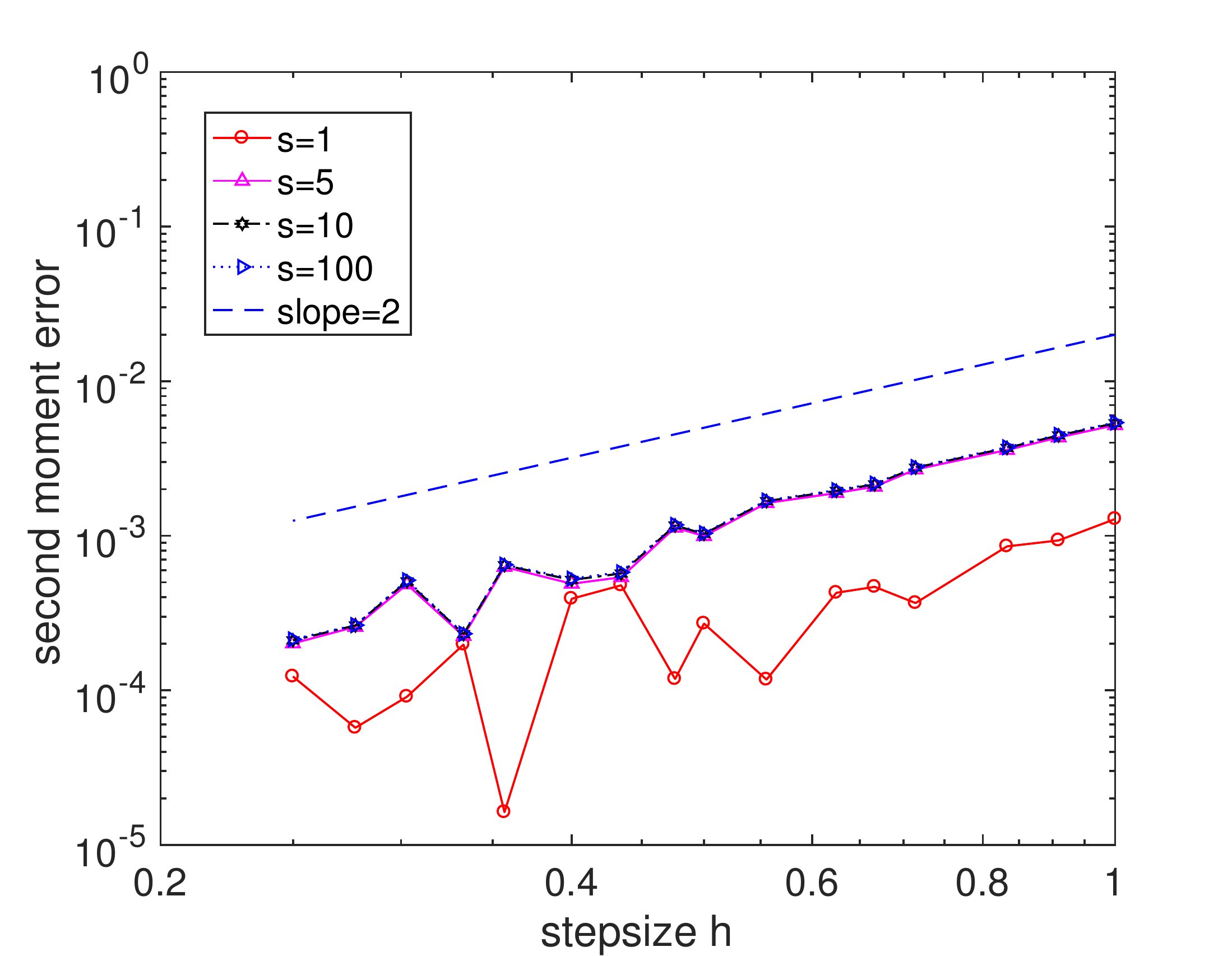}
			\caption{PSK-ROCK with $T=10$.}
		\end{subfigure}
		\caption{Linear additive problem \eqref{OU}. Second moment error $\IE(X(T)^2)$ for for short time $T=0.5$ (top pictures) and long time $T=10$ (bottom pictures) without (SK-ROCK) or with a postprocessor (PSK-ROCK).
			where, $h=T/[10\times2^{i/8}]$ for $T=10$, and $h=T/[2^{i/2}]$ for $T=0.5$ with $i=1,\ldots,16,$ 
			and $10^8$ samples.}
		\label{fig:OU}
	\end{figure}
	
	\subsection{Linear case: Orstein-Uhlenbeck process}
	We now illustrate numerically in details the role of the postprocessor introduced in Theorem \ref{thm:postprocessor}
	for the linear Orstein-Uhlenbeck process in dimension $d=m=1$,
	\begin{equation}\label{OU}
	dX=-\lambda Xdt+\sigma dW,\quad
	X(0)=2
	\end{equation}
	where we choose $\lambda=1$ and $\sigma=\sqrt2$.
	
	% Gilles: DELETED
	%In Figure \ref{fig:OU}, we consider the SK-ROCK and PSK-ROCK methods with $s=1,5,10,100$ stages, respectively.
	%For a short time $T=0.5$ (Fig. \ts\ref{fig:OU}(a)(b)), we observe weak convergence slopes one (second moment $\IE(X(T)^2)$)  as predicted by Theorem \ref{thm:conv}, and the postprocessor has nearly no effect of the errors.
In Figure \ref{fig:OU}, we consider the SK-ROCK and PSK-ROCK methods with $s=1,5,10,100$ stages, respectively.
	For a short time $T=0.5$ (Fig. \ts\ref{fig:OU}(a)(b)), we observe weak convergence slopes one for both SK-ROCK and PSK-ROCK (second moment $\IE(X(T)^2)$)  as predicted by Theorem \ref{thm:conv}, and the postprocessor has nearly no effect of the errors.
	For a long time $T=10$ where the solution of this ergodic SDE is close to equilibrium, we observe that the weak order one of SK-ROCK (Fig. \ts\ref{fig:OU}(c)) is improved to order two using the postprocessor in PSK-ROCK (Fig. \ts\ref{fig:OU}(d)), which confirms the statement of Theorem \ref{thm:postprocessor} that the postprocessed scheme has order two of accuracy for the invariant measure.
	\begin{figure}[tb]
		\centering
		\includegraphics[scale=0.35]{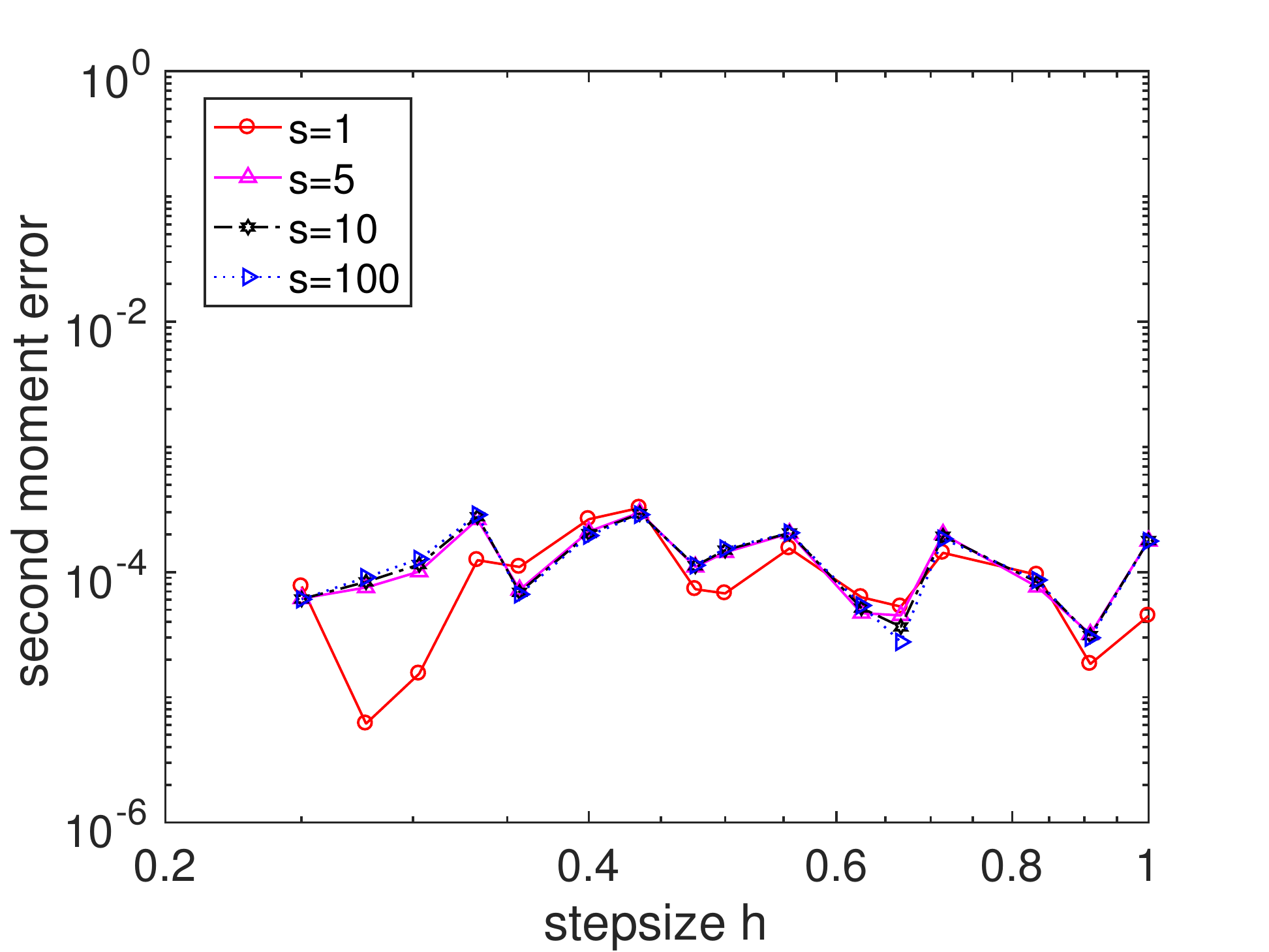}
		\caption{PSK-ROCK without damping $(\eta=0)$.
			Second moment error of problem \eqref{OU}, with $T=10$, $h=T/[2^{i/2}]$, and $s=1,5,10,100$, using $M=10^8$ samples (the Monte-Carlo error has size $M^{-1/2}=10^{-4}$).}
		\label{fig:OUexact}
	\end{figure}
	For comparison, in Figure \ref{fig:OUexact}, we also include the results of PSK-ROCK without damping ($\eta=0$) using $M=10^8$ samples. We recall that for the scalar linear  Orstein-Uhlenbeck process, the PSK-ROCK method with zero damping is exact for the invariant measure (see Section \ref{sub:exactOU}).
	We observe only Monte-Carlo errors with size $\simeq M^{-1/2}=10^{-4}$, which confirms that the PSK-ROCK method has no bias at equilibrium for the invariant measure in the absence of damping, as shown in \eqref{eq:exact}.
	We emphasise however that this exactness results holds only for linear problems, and a positive damping parameter $\eta$ should be used for nonlinear SDEs for stabilization, as shown in Sections~\ref{sec:analysis} and~\ref{sub:exactOU}.

\subsection{Nonglobally Lipschitz Brownian dynamics}
\label{sec:nonlip}
\begin{figure}[tb]
		\centering
				\begin{subfigure}[t]{0.45\textwidth}
			\includegraphics[width=1\linewidth]{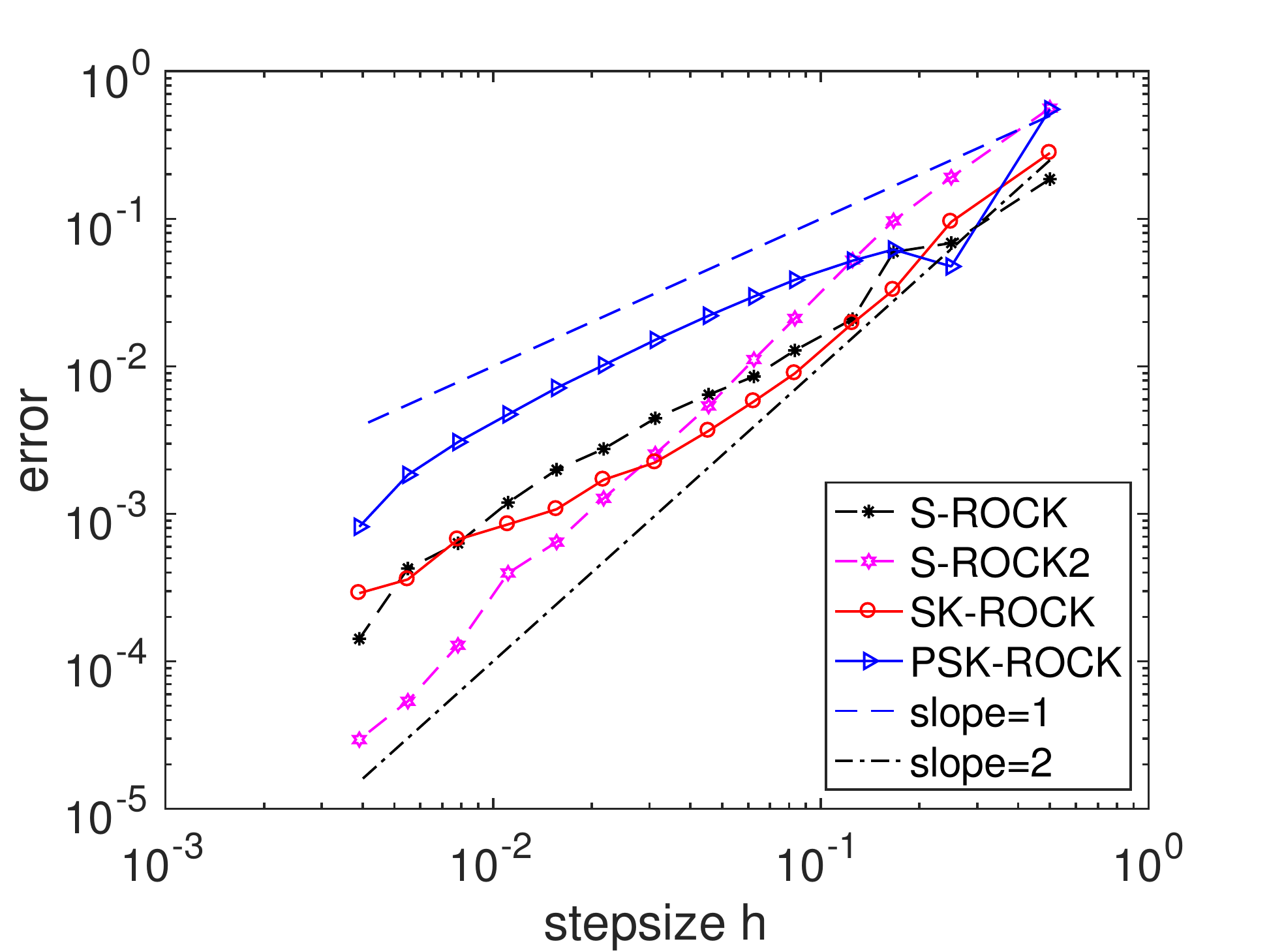}
			\caption{Final time $T=0.5$, $h=T/[2^{i/2}],\\i=1,\ldots,14.$}
			\label{fig:snoushorth}
		\end{subfigure}
				\begin{subfigure}[t]{0.45\textwidth}
			\includegraphics[width=1\linewidth]{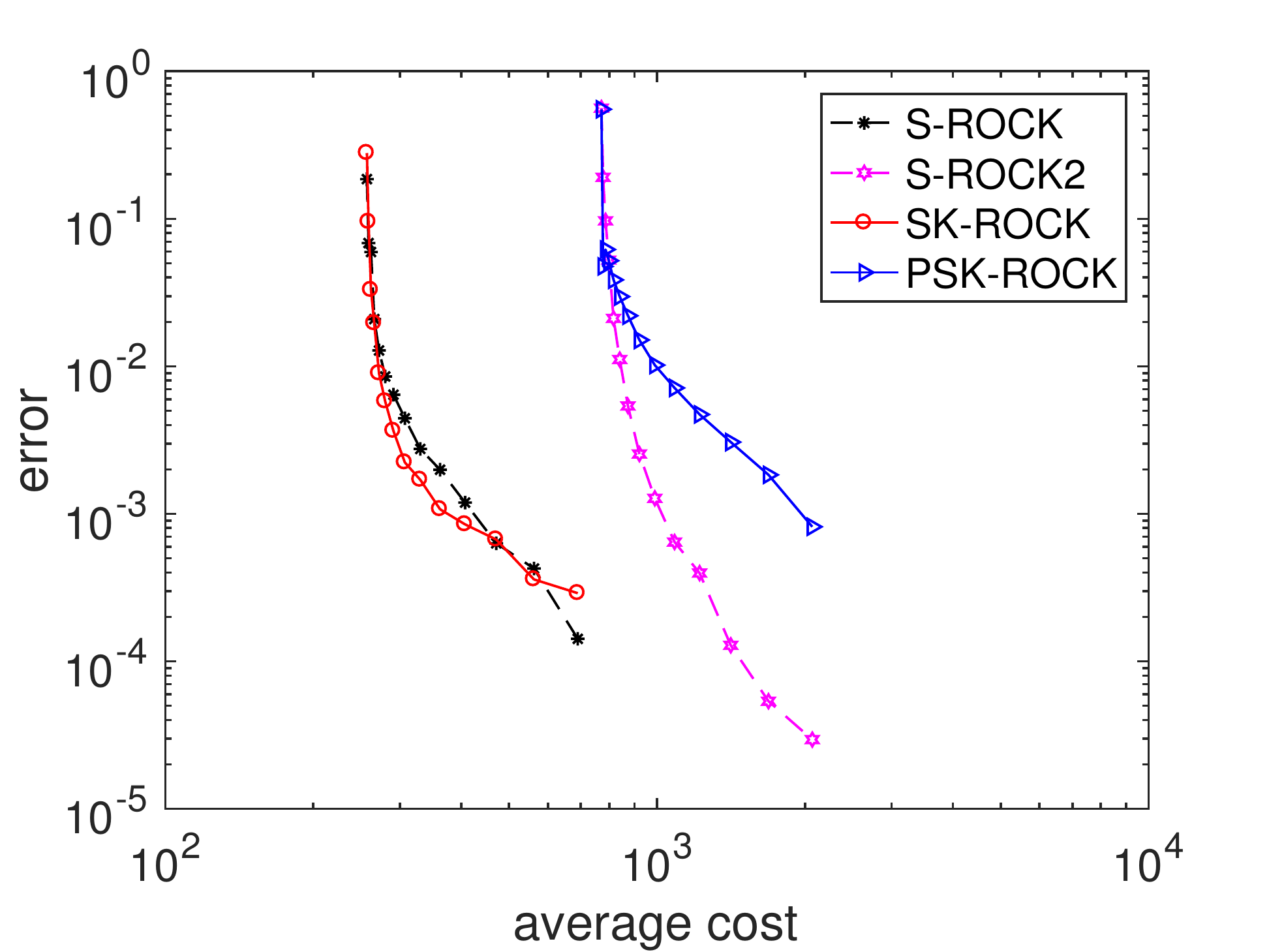}
			\caption{Final time $T=0.5$, $h=T/[2^{i/2}],\\i=1,\ldots,14.$}
			\label{fig:snoushort}
		\end{subfigure}\\[-3ex]
		\begin{subfigure}[t]{0.45\textwidth}
        \includegraphics[width=1\linewidth]{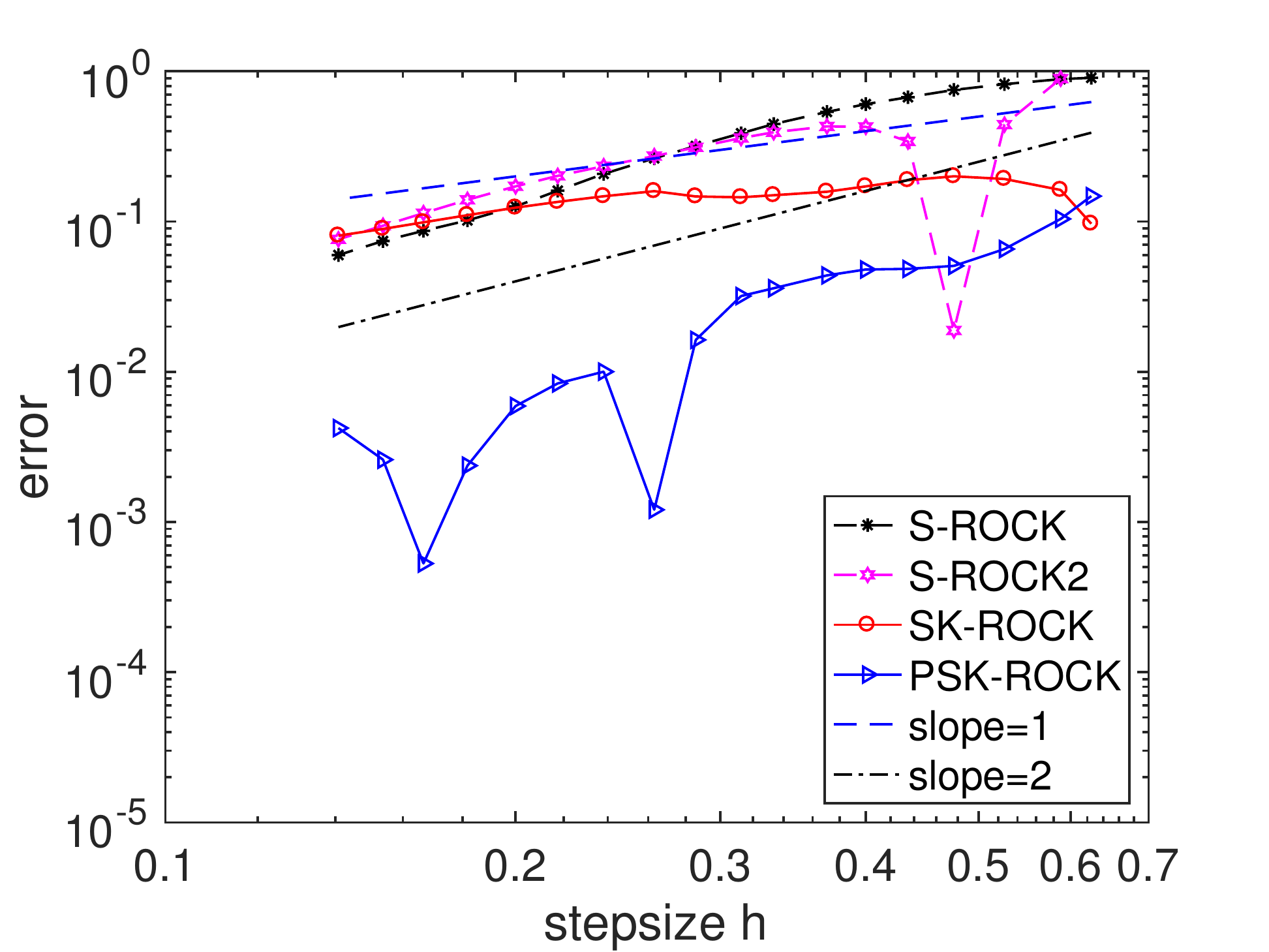}
		\caption{Final time $T=10$, $h=T/[15\times2^{i/8}],\\i=1,\ldots,18.$}
		\label{fig:snoulongh}
		\end{subfigure}
		\begin{subfigure}[t]{0.45\textwidth}
        \includegraphics[width=1\linewidth]{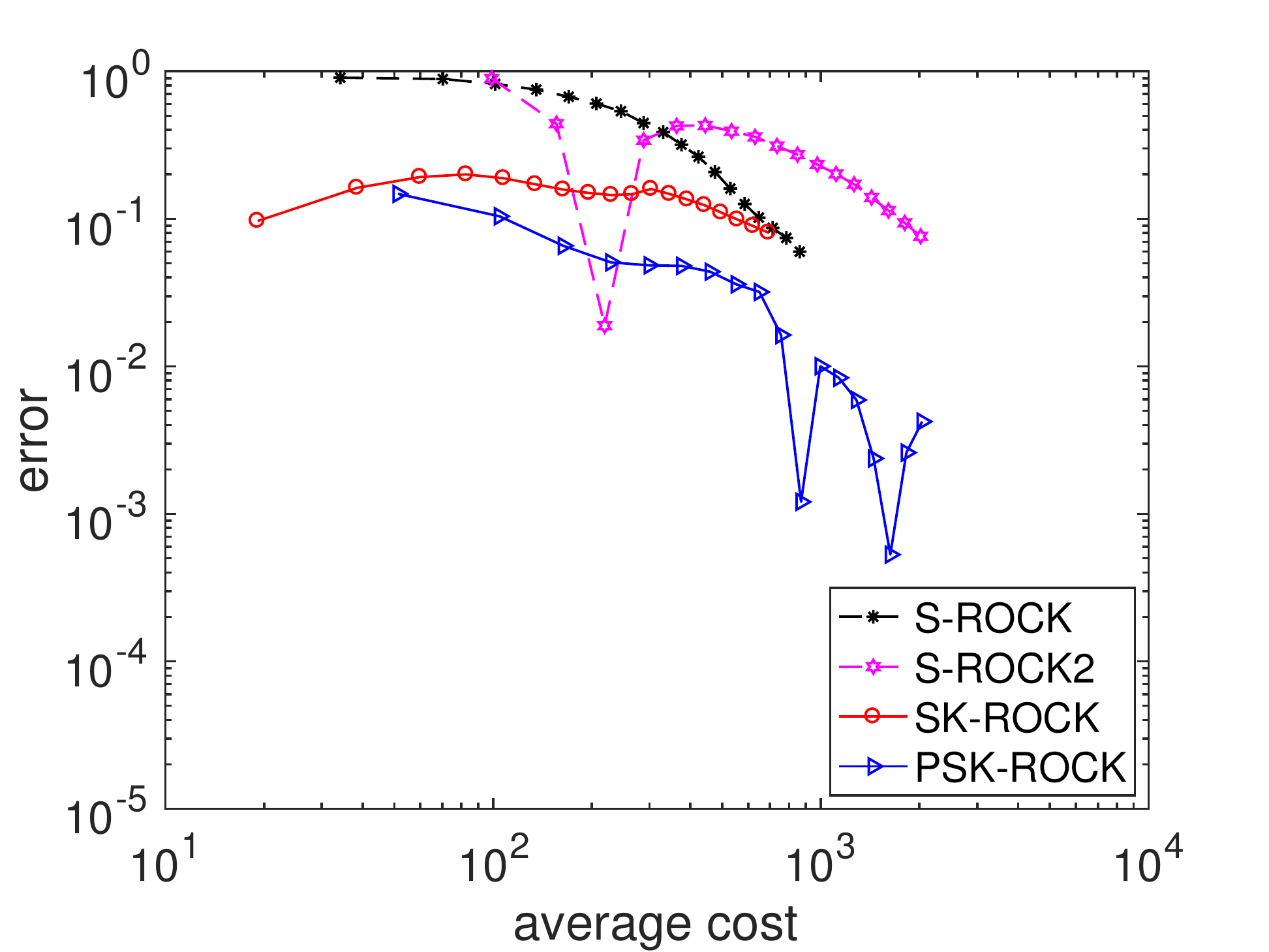}
		\caption{Final time $T=10$, $h=T/[15\times2^{i/8}],\\i=1,\ldots,18.$}
		\label{fig:snoulong}
		\end{subfigure}
	    
	\caption{Second moment errors versus the average number of drift function evaluations for problem \eqref{eq:snou} using S-ROCK, S-ROCK2 and the new method SK-ROCK and its postprocessed version PSK-ROCK. We use  discrete random increments and $10^8$ samples. }
	\label{fig:snoufig}
	\end{figure}
To illustrate the advantage of the PSK-ROCK method applied to nonglobally Lipschitz ergodic Brownian dynamics, we next consider the following double well potential $V(x)=(1-x^2)^2/4$ and the corresponding one-dimensional Brownian dynamics problem
	\begin{equation}\label{eq:snou}
	dX=(-X^3+X)dt+\sqrt2dW,\qquad X(0)=0,
	\end{equation}
	In Figure \ref{fig:snoufig}, we compare the performances of S-ROCK, 
	S-ROCK2 considered in \cite{AVZ13} (a method with weak order 2 for general SDEs), and the new SK-ROCK and PSK-ROCK methods at short time $T=0.5$ (Figures \ref{fig:snoufig}(a)(b)) and long time $T=10$ (Figures \ref{fig:snoufig}(c)(d)). 
	As we focus on invariant measure convergence and not on strong convergence, we consider here discrete random increments
	with $\IP(\xi_n=\pm \sqrt3)=1/6,\IP(\xi_n=0)=2/3$, which has the correct moments so that 
	Theorem \ref{thm:postprocessor} remains valid. Our numerical tests indicate that it makes
	PSK-ROCK with modified stage \eqref{eq:defR1new} more stable. For a fair comparison, we use
	the same discrete random increments for all schemes.
	We plot the second moment error versus the time stepsize $h$ and versus the average cost which is the total number of function evaluations during the time integration divided by the total number number of samples. Indeed, the number of function evaluations depends on the trajectories because the stage parameter s is adaptive at each time step. 
	For short time, we can see that the S-ROCK and the SK-ROCK method have order 1 (Figure \ref{fig:snoufig}(a)) and exhibit similar performance with nearly identical error versus cost curves in Figure \ref{fig:snoufig}(b), while PSK-ROCK 
is less advantageous for short time. This illustrates that the postprocessing has no advantage for short times. The S-ROCK2 method is the most accurate for small time steps, and it has order 2 as shown in Figures \ref{fig:snoufig}(a)(c), but at the same time it has a larger average cost as observed in Figures \ref{fig:snoufig}(b)(d) due to its smaller stability domain with size $\simeq 0.42 \cdot s^2$.
%	have approximately same accuracy for some costs but at some point the new method is much more accurate with similar costs, while the PSK-ROCK is useless here because we have not reached the equilibrium. 
For long time, the SK-ROCK and S-ROCK both exhibit order 1 of accuracy (Figure \ref{fig:snoufig}(c)), with an advantage in terms of error versus cost for the
SK-ROCK method that is about $10$ times more accurate for large time steps. In contrast, the postprocessed scheme PSK-ROCK exhibits order 2 of convergence (Figure \ref{fig:snoufig}(c)) which corroborates Theorem \ref{thm:postprocessor}. Since the postprocessing overcost is negligible (two additional vector field evaluations per timestep due to the modified stage $K_1$ in \eqref{eq:defR1new}), this makes PSK-ROCK the most efficient in terms of error versus cost, as shown in Figure \ref{fig:snoufig}(d). The S-ROCK2 method has order 2 here but with poor accuracy compared to the PSK-ROCK method with approximately the same cost. Note that typically the SK-ROCK method used $s=1,2,3$ stages in contrast to the S-ROCK method using $s=2,\ldots,6$ stages per timesteps.

	\subsection{Stochastic heat equation with multiplicative space-time noise}	
	Although our analysis applies only to finite dimensional systems of SDEs, we consider the following stochastic partial differential equation (SPDE) obtained by adding multiplicative noise to the heat equation,
	\begin{align}\label{eq:spde}
	\dif{u(t,x)}{t}&=\diff{u(t,x)}{x} + u(t,x)\dot{W}(t,x), \quad
	(t,x)\in[0,T]\times[0,1]\nonumber\\
	u(0,x)&=5\cos(\pi x),\quad x\in[0,1],\nonumber\\
	u(t,0)&=5,\quad \dif{u(t,1)}{x}=0,\quad t\in[0,T],
	\end{align}
		where $\dot{W}(t,x)$ denotes a space-time white noise that we discretize together with the Laplace operator with a standard finite difference formula \cite{DaG00}.
	We obtain the following stiff system of SDEs where
$u(x_i, t) \approx u_i(t)$, with $x_i=i \Delta x$, $\Delta x = 1/N$,
\begin{eqnarray*} 
du_i &=& \frac{u_{i+1}-2u_i+u_{i-1}}{\Delta x^2}dt+ \frac{u_i}{\sqrt{\Delta x}}  dw_i, \quad i=1,\ldots,N,
\end{eqnarray*}
where the Dirichlet and the Neumann conditions impose $u_{0}=5$ and $u_{N+1}=u_{N-1}$, respectively. 
Here, $w_1,\ldots, w_{N}$ are independent standard Wiener processes and $dw_i$ indicates It\^o noise. 
	%\begin{figure}[tb]
		%\centering
		%\includegraphics[scale=0.4]{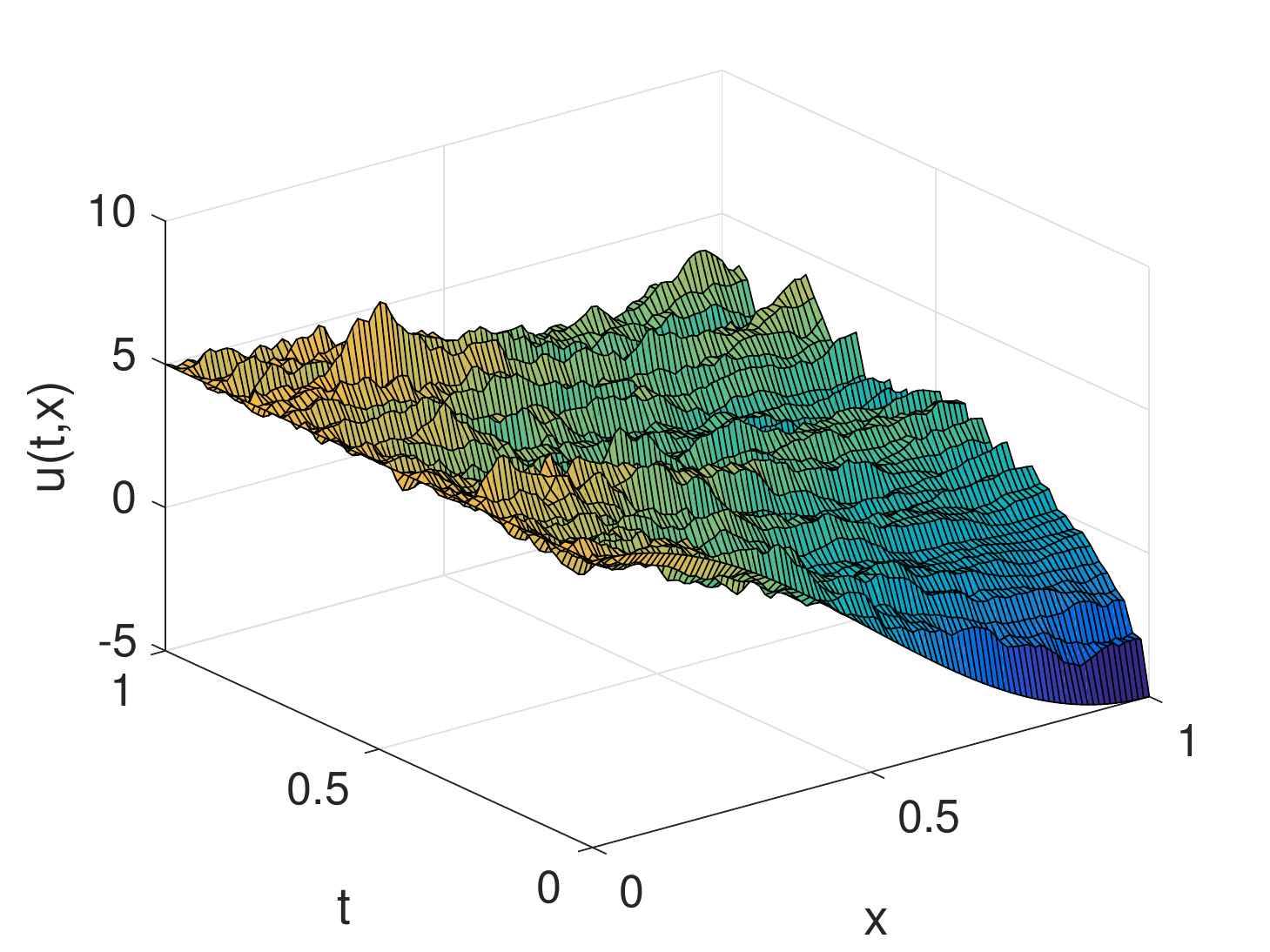}
		%\caption{One realization of the SPDE \eqref{eq:spde} using SK-ROCK with discretization parameters $\Delta x=1/100$, and $\Delta t=1/50$, using $s=22$ stages.}
		%\label{fig:solspde}
	%\end{figure}
	In Figure \ref{fig:spde}(a), we plot one realization of the SPDE using space stepsize $\Delta x=1/100$ and timestep size $\Delta t=1/50$. Note that the Lipchitz constant associated to the space-discretization of \eqref{eq:spde} has size $\rho=4\Delta x^{-2}$, and the stability condition is fulfilled for $s=22$ stages. For comparison, the standard S-ROCK method would require $s=46$ stages, while applying the standard Euler-Maruyama with a smaller stable timestep $\Delta t/s$ would require $s \geq \Delta t \rho/2=400$ intermediate steps.
	Notice that the initial condition in \eqref{eq:spde} satisfies the boundary conditions, which permits a smooth solution close to time $t=0$. Taking alternatively an initial condition that does not satisfy the boundary conditions (for instance $u(x,0)=1$) yields an inaccurate numerical solution with large oscillations close to the boundary $x=0$. A simple remedy in such a case is to consider a larger damping parameter $\eta$, as described in Remark \ref{rem:stabilization}.
	
		%\begin{figure}[tb]
		%\centering
		%\includegraphics[scale=0.4]{figspde/solspde.eps}
		%\caption{One realization of the SPDE \eqref{eq:spde} using SK-ROCK with discretization parameters $\Delta x=1/100$, and $\Delta t=1/50$, using $s=22$ stages.}
		%\label{fig:solspde}
	%\end{figure}
	
		\begin{figure}[tb]
		\centering
		\begin{subfigure}[t]{0.45\textwidth}
			\centering
			\includegraphics[scale=0.5]{figspde/solspde.eps}
			\caption{One realization with SK-ROCK using ${\Delta t=1/50}$, $\Delta x=1/100$, $s=22$.}
			\label{fig:solspde}
		\end{subfigure}
		\hfill
		\begin{subfigure}[t]{0.45\textwidth}
			\centering
			\includegraphics[width=1\linewidth]{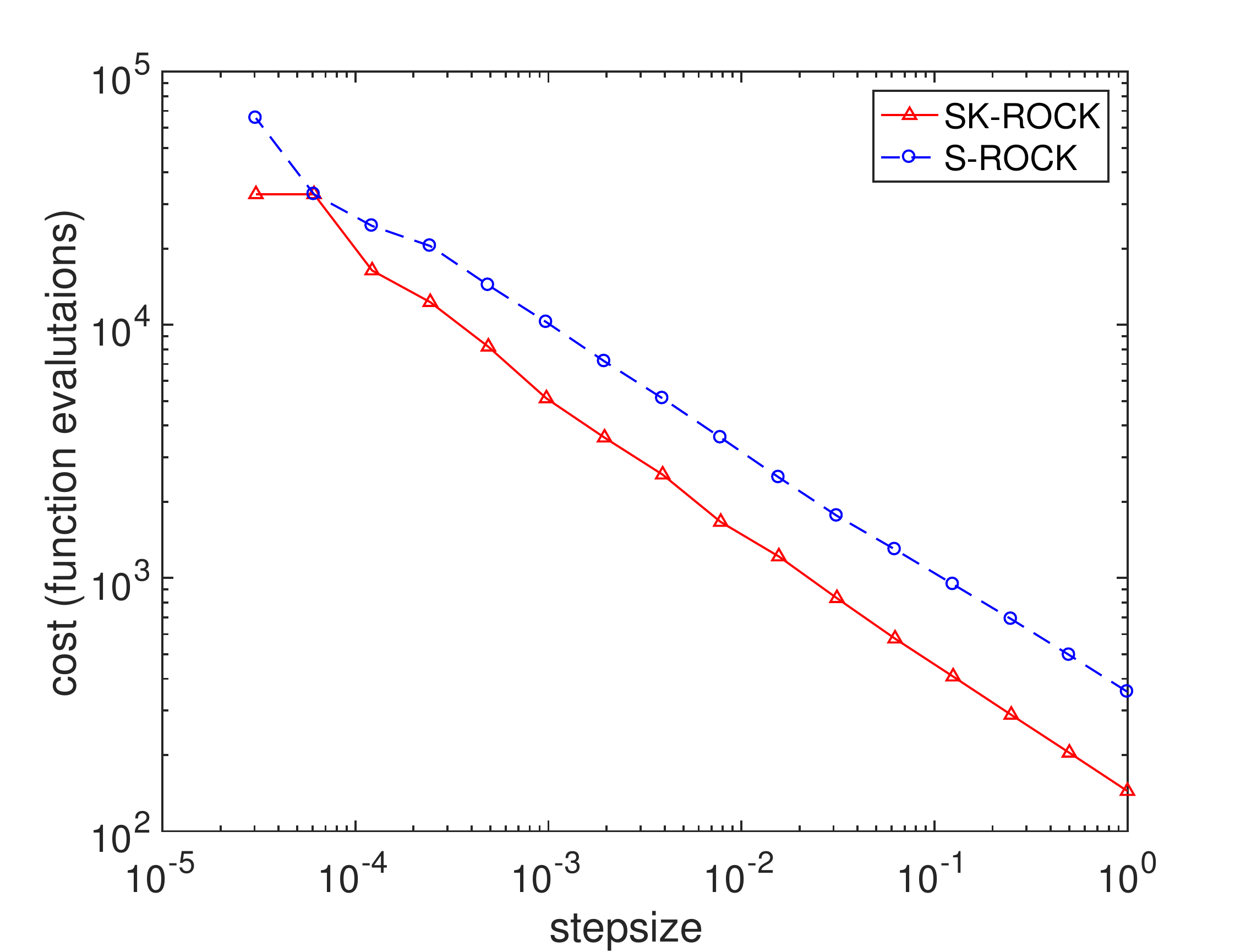}
			\caption{Comparison of S-ROCK and SK-ROCK: cost (function evaluations) with respect to stepsize $\Delta t=2^{-i}, i=0,\ldots,15$.}
			\label{fig:costb}
		\end{subfigure}
		\caption{
		SPDE problem \eqref{eq:spde} using the space discretization stepsize $\Delta x=1/100$.
			%Comparison of the cost (number of function evaluations) of the S-ROCK and SK-ROCK methods
			%applied to the SPDE \eqref{eq:spde} with $\Delta x=1/100$ as a function of the stepsize $h=2^{-i}, i=0,\ldots,15$.% 
			}
		\label{fig:spde}
	\end{figure}
	
	%\begin{figure}[tb]
		%\centering
		%\begin{subfigure}[t]{0.45\textwidth}
			%\centering
			%\includegraphics[width=1\linewidth]{}
			%\caption{Cost with respect to stepsize $h$.}
			%\label{fig:costb}
		%\end{subfigure}
		%\begin{subfigure}[t]{0.45\textwidth}
			%\centering
			%\includegraphics[width=0.97\linewidth]{}
			%\caption{Ratio of the cost of S-ROCK method versus the cost of SK-ROCK method. %cost(S-ROCK)/cost(SK-ROCK).
			%}
			%\label{fig:costa}
		%\end{subfigure}
		%\caption{
			%Comparison of the cost (number of function evaluations) of the S-ROCK and SK-ROCK methods
			%applied to the SPDE \eqref{eq:spde} with $\Delta x=1/100$ as a function of the stepsize $h=2^{-i}, i=0,\ldots,15$.% 
			%}
		%\label{fig:cost}
	%\end{figure}
	In Figure \ref{fig:spde}(b), we compare the number of vector field evaluations of the standard S-ROCK and new SK-ROCK methods when applied to the SPDE \eqref{eq:spde} with finite difference discretization with parameter $\Delta x=1/100$. The better performance of SK-ROCK with damping $\eta=0.05$ is due to its larger stability domain with size $\simeq 1.94\cdot s^2$ compared to the size $\simeq 0.33 \cdot s^2$ for S-ROCK. Observing the ratio of the two costs in Figure \ref{fig:spde}(b), we see that the new SK-ROCK methods has a reduced cost for stabilization by an asymptotic factor of about $\sqrt{1.94/0.33}\simeq 2.4$ for large $s$ and large stepsizes, which confirms the stability analysis of Section \ref{sec:analysis}.
	The convergence analysis of the SK-ROCK method for the stochastic heat equation is the topic of future work.
	\begin{remark}
Notice that SK-ROCK with $s=1$ stage has the optimal mean-square stability length ($L=2$ for $\eta=0$) as defined in \eqref{eq:defL}. In contrast, the S-ROCK method with $s=1$ has the smaller stability length $L=3/2$, while the standard Euler-Maruyama has $L=0$.
This explains why for the smallest considered stepsize $\Delta t=2^{-15}$ in Figure \ref{fig:spde}(b), we have $s=1$ for SK-ROCK while 
S-ROCK uses $s=2$ stages.
	\end{remark}
	\begin{figure}[tb]
		\centering
		\begin{subfigure}[t]{0.45\textwidth}
			\centering
			\includegraphics[scale=0.5]{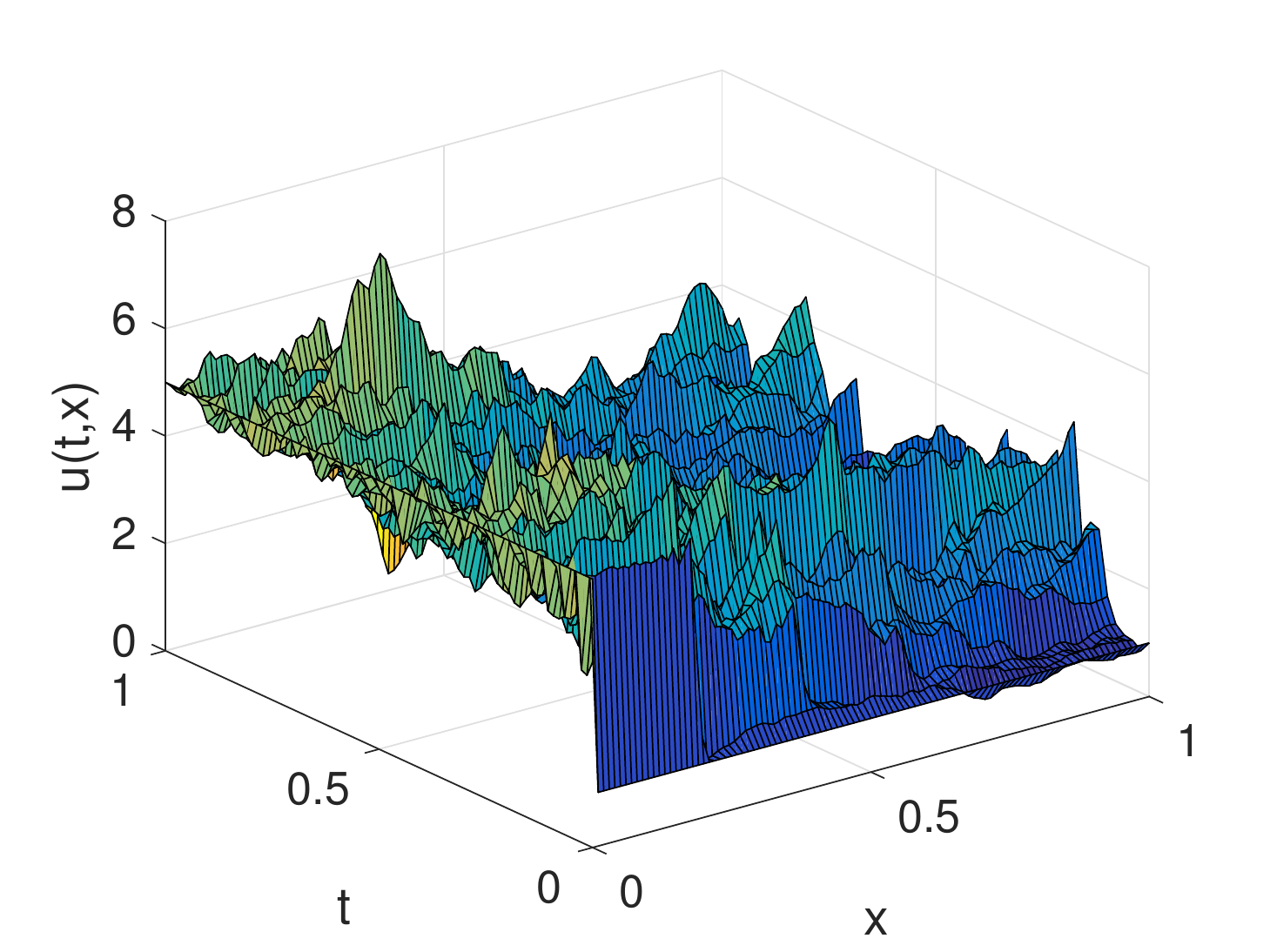}
			\caption{$\eta=0.05$, $s=22$.}
			\label{fig:spdestiffa}
		\end{subfigure}
		\qquad
		\begin{subfigure}[t]{0.45\textwidth}
			\centering
			\includegraphics[scale=0.5]{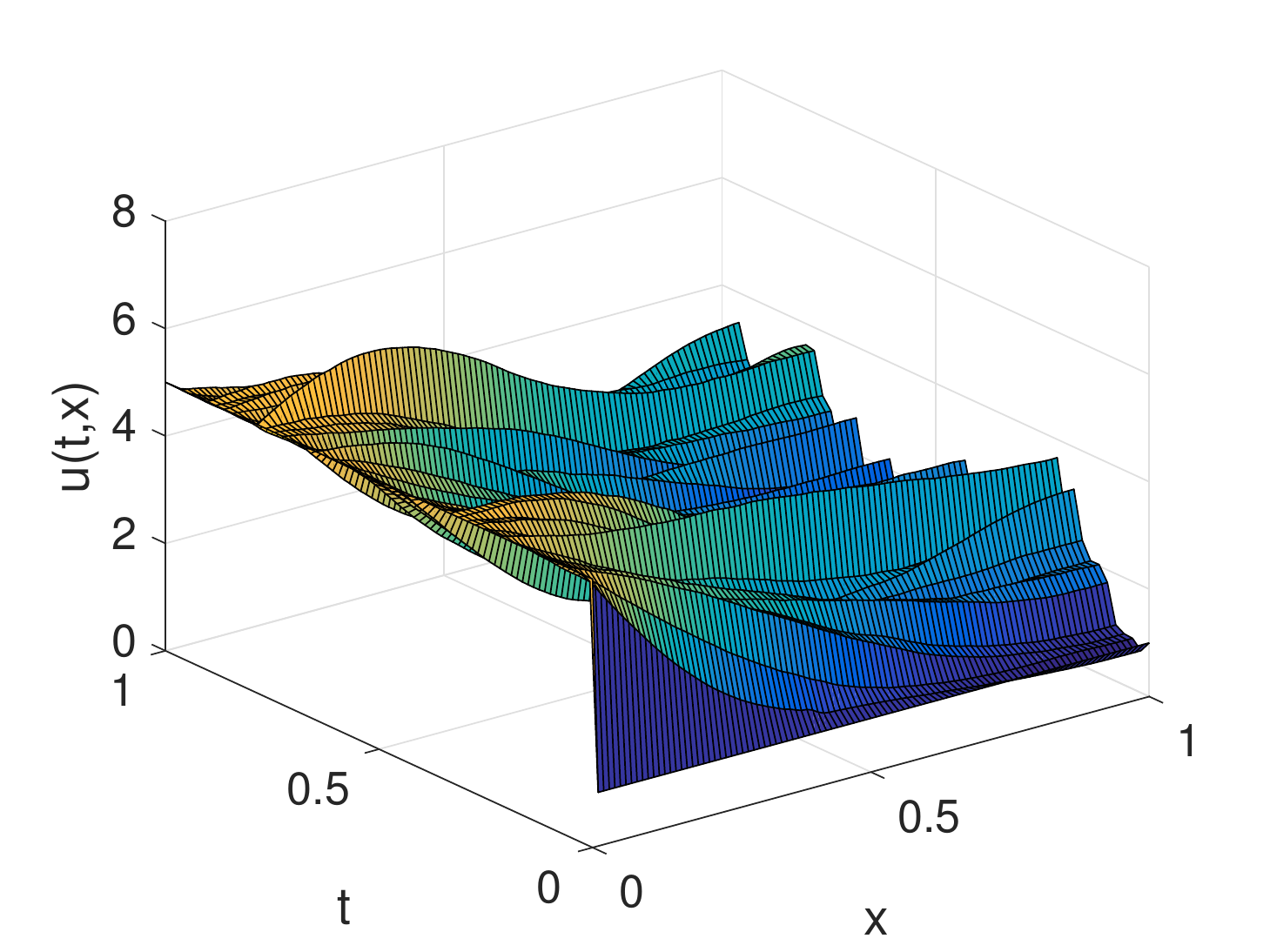}
			\caption{$\eta=10$, $s=44$.}
			\label{fig:spdestiffb}
		\end{subfigure}
		
		\caption{
		SPDE problem \eqref{eq:spde} with the initial condition $u(0,x)=1$. One realization with SK-ROCK using ${\Delta t=1/50}$, $\Delta x=1/100$ for different values of the damping parameter $\eta$.
			}
		\label{fig:spdestiff}
	\end{figure}
	In Figure \ref{fig:spdestiff} we consider again one realization with SK-ROCK of the SPDE problem \eqref{eq:spde} but with a different initial condition
	$u(0,x)=1$ not fulfilling the boundary conditions, i.e. that is outside the domain of the Laplace operator, as considered in \cite{AbB2}. 
	We compare the result for the same sets of random numbers but for 
	for different values of the damping parameter $\eta$. We observe numerically high oscillations in time and space for the small damping value $\eta=0.05$  in Figure \eqref{fig:spdestiffa} while the larger damping $\eta=10$ yields a smoother solution in Figure \eqref{fig:spdestiffb}.
	This illustrates again Remark~\ref{rem:stabilization} showing that the damping parameter $\eta$ can be increased in the case of
	severely stiff problems, adjusting the stage parameter accordingly with \eqref{eq:defseta}.
	
	%--------------------------------------------------------------
\bigskip

\noindent \textbf{Acknowledgements.}\
The work of the first author was partially supported by the Swiss National Foundation, grant
200020\_172710. The work of the second and third authors was partially supported by the Swiss National Science Foundation, grants 200020\_144313/1 and 200021\_162404. 
The computations were performed at University of Geneva on the Baobab cluster.

\bibliographystyle{abbrv}
	\bibliography{abd_biblio,HLW,complete}

\end{document}